\newtheorem{thm}{Theorem}[section]
\newtheorem{cor}[thm]{Corollary}
\newtheorem{lem}[thm]{Lemma}
\newtheorem{prop}[thm]{Proposition}
\theoremstyle{definition}
\newtheorem{defin}[thm]{Definition}
\newtheorem*{xrem}{Remark}
\numberwithin{equation}{section}
\newcommand{\Ba}[1]{\begin{array}{#1}}
\newcommand{\Ea}{\end{array}}
\newcommand{\Be}{\begin{equation}}
\newcommand{\Ee}{\end{equation}}
\newcommand{\Bea}{\begin{eqnarray}}
\newcommand{\Eea}{\end{eqnarray}}
\newcommand{\Beas}{\begin{eqnarray*}}
\newcommand{\Eeas}{\end{eqnarray*}}
\newcommand{\Benu}{\begin{enumerate}}
\newcommand{\Eenu}{\end{enumerate}}
\newcommand{\Bi}{\begin{itemize}}
\newcommand{\Ei}{\end{itemize}}
\newcommand{\bprop} {\begin{proposition}}
\newcommand{\eprop} {\end{proposition}}
\newcommand{\bthm} {\begin{theorem}}
\newcommand{\ethm} {\end{theorem}}
\newcommand{\blem} {\begin{lemma}}
\newcommand{\elem} {\end{lemma}}
\newcommand{\bcor} {\begin{corollary}}
\newcommand{\ecor} {\end{corollary}}
\begin{document}

%%%%% To ease editing, for IMPAN journals add:

\baselineskip=17pt

%%%%%%%%%%%

%% In the running head, replace first names by initials 
%% and give an abbreviation of the title.

\title[Carleson embeddings]{Carleson embeddings for Hardy-Orlicz and Bergman-Orlicz spaces of the upper-half plane }

\author[J. M. Tanoh Dje]{Jean Marcel Tanoh Dje}
\address{Laboratoire de Math\'ematiques Fondamentales\\ UFR Math\'ematiques et Informatique\\ Universit\'e F\'elix Houphou\"et-Boigny\\ Abidjan-Cocody, 22 B.P.582 Abidjan 22,C\^ote d' Ivoire}
\email{djetano2017@gmail.com}

\author[B. F. Sehba]{Beno\^it Florent Sehba}
\address{Department of Mathematics, University of Ghana,\\ P. O. Box LG 62 Legon, Accra, Ghana}
\email{bfsehba@ug.edu.gh}

%\date{}

\begin{abstract}
In this paper we characterize off-diagonal Carleson embeddings for both Hardy-Orlicz spaces and Bergman-Orlicz spaces of the upper-half plane. We use these results to obtain embedding relations and pointwise multipliers between these spaces.  
\end{abstract}

\subjclass[2010]{Primary 32A25, 42B25, 46E30; Secondary 42B35}

\keywords{Bergman space, Hardy space, Carleson measure, Dyadic interval, Maximal function,  Upper-half plane}

\maketitle
\section{Introduction}
\setcounter{equation}{0} \setcounter{footnote}{0}
\setcounter{figure}{0} The aim of this note is to provide Carleson embeddings for some weighted spaces
of holomorphic functions $D^\Phi$ of the upper-half plane. More precisely, we characterize those  positive measures $\mu$ on the upper-half plane such that $D^\Phi$ embeds continuously into $L^\Psi(d\mu)$. Here the space $D^\Phi$ is either a Hardy-Orlicz space or a Bergman-Orlicz space.  Our results are  applied to
the characterization of embedding relations between Hardy-Orlicz spaces and Bergman-Orlicz spaces or just between Bergman-Orlicz spaces. We also characterize pointwise  multipliers from Hardy-Orlicz spaces or Bergman-Orlicz spaces to Bergman-Orlicz spaces.
\vskip .3cm
Recall that the upper-half plane is the subset $\mathbb{C}_+$ of the complex plane $\mathbb{C}$ defined by $$\mathbb{C}_+:=\{x+iy\in \mathbb{C}:y>0\}.$$ 
A continuous and nondecreasing
function $\Phi$ from $[0,\infty)$ onto itself is called a growth function. Note that this implies that $\Phi(0)=0$.
\vskip .1cm
For $\Phi$ a growth function, the Luxembourg (quasi)-norm on $L^\Phi(\mathbb{R})$ is the quantity $$\|f\|_{L^\Phi}^{lux}:=\inf\{\lambda>0:\,\,\int_{\mathbb{R}}\Phi\left(\frac{|f(x)|}{\lambda}\right)dx\le 1\}.$$

Given $\Phi$ a growth function, the Hardy-Orlicz space $H^{\Phi}(\mathbb{C}_+)$ is the space of all holomorphic functions $f$ on $\mathbb{C_+}$ such that $$\|f\|_{H^{\Phi}}^{lux}:=\sup_{y>0}\|f(\cdot+iy)\|_{L^\Phi}^{lux}<\infty.$$
For $\alpha>-1$, we write $dV_\alpha(z)=y^\alpha dV(z)$ where $dV(x+iy)=dxdy$. For $\Phi$ a growth function and $\alpha>-1$, the Bergman-Orlicz space $A_\alpha^{\Phi}(\mathbb{C}_+)$ is the subspace of the Orlicz space $L_\alpha^\Phi(\mathbb{C}_+)$ consisting of all holomorphic functions on $\mathbb{C_+}$. Recall that $f\in L_\alpha^\Phi(\mathbb{C}_+)$ if $$\|f\|_{L_\alpha^{\Phi}}=\|f\|_{\Phi,\alpha}:=\int_{\mathbb{C}_+}\Phi(|f(z)|)dV_\alpha(z)<\infty.$$
We also endow $A_\alpha^\Phi(\mathbb{C}_+)$ with the following Luxembourg (quasi)-norm  defined on $L_\alpha^\Phi(\mathbb{C}_+)$ by $$\|f\|_{\Phi,\alpha}^{lux}:=\inf\left\{\lambda>0:\,\,\int_{\mathbb{C}_+}\Phi\left(\frac{|f(z)|}{\lambda}\right)dV_\alpha(z)\le 1\right\}.$$ 
We note that when $\Phi(t)=t^p$, $0<p<\infty$, $H^\Phi(\mathbb{C}_+)$ and  $A_\alpha^\Phi(\mathbb{C}_+)$ are just the usual Hardy space and Bergman space $H^p(\mathbb{C}_+)$ and $A_\alpha^p(\mathbb{C}_+)$ respectively defined as the spaces of all holomorphic functions $f$ on $\mathbb{C}_+$ such that $$\|f\|_{H^p}^p:=\sup_{y>0}\int_{\mathbb{R}}|f(x+iy)|^pdx<\infty$$
and
$$\|f\|_{A_\alpha^p}^p:=\int_{\mathbb{C}_+}|f(z)|^pdV_\alpha(z)<\infty.$$
A growth function $\Phi$ is said to be of upper type $q$ if we can find $q > 0$ and $C>0$ such that, for $s>0$ and $t\ge 1$,
\begin{equation}\label{uppertype}
 \Phi(st)\le Ct^q\Phi(s).\end{equation}
%We say that $\Phi$ is of lower type $p$ (resp. upper type $q$) when (\ref{lowertype}) (resp. (\ref{uppertype})) is satisfied.
We denote by $\mathscr{U}^q$ the set of growth functions $\Phi$ of upper type $q$, (with $q\ge 1$), such that the function $t\mapsto \frac{\Phi(t)}{t}$ is non-decreasing. We write $$\mathscr{U}=\bigcup_{q\geq 1}\mathscr{U}^q.$$
Note that we may always suppose that any $\Phi\in\mathscr{U}$ is convex and
that $\Phi$ is a $\mathscr{C}^1$ function with derivative $\Phi'(t)\backsimeq \frac{\Phi(t)}{t}$.
\vskip .3cm
For $\Phi_1,\Phi_2\in \mathscr{U}$, our main concern in this note is the characterization of all positive measures $\mu$ on $\mathbb{C}_+$ such that $H^{\Phi_1}(\mathbb{C}_+)$ (resp. $A_\alpha^{\Phi_1}(\mathbb{C}_+)$) embeds continuously into $L^{\Phi_2}(\mathbb{C}_+, d\mu)$.
\vskip .2cm
In the case of the unit disc, the continuous embedding $H^p\hookrightarrow L^q(d\mu)$ was first considered by L. Carleson \cite{carleson1,carleson2} for $p=q$. The case $0<p\le q<\infty$ for the unit disc was solved by P. Duren in \cite{duren}. Since then the problem has been considered by several authors for both Hardy and Bergman spaces of various domains for $\Phi_1(t)=t^p$ and $\Phi_2(t)=t^q$, $0<p,q<\infty$ (see \cite{CW,hastings,hormander,luecking1,luecking2,luecking3,power,Ueki} and the references therein). In the unit ball, the continuous embeddings $H^{\Phi_1}\hookrightarrow L^{\Phi_2}(d\mu)$ and  $A_\alpha^{\Phi_1}\hookrightarrow L^{\Phi_2}(d\mu)$ for $\frac{\Phi_2}{\Phi_1}$ nondecreasing were obtained in \cite{Charpentier,Charpentiersehba,sehba}.
\vskip .1cm
The characterization of the measures $\mu$ for which the embedding $H^p(\mathbb{C}_+)\hookrightarrow L^q(\mathbb{C}_+ d\mu)$ holds, essentially makes use of techniques from harmonic analysis (for $p=q$, see for example \cite[Ch. 7]{grafakos}). One of the further main difficulties when working with growth functions, is the fact that they are not multiplicative (i.e. $\Phi(ab)\neq\Phi(a)\Phi(b)$) in general. Hence to handle Carleson measures here, we develop an approach also based on techniques of harmonic analysis that allows us to overcome the mentioned obstacle and extend the classical results.
%{\bf Sehba: elaborer un peu plus, plus de details sur qui a fait quoi}
\vskip .1cm
Carleson embeddings are very useful in the study of various questions in analytic function spaces: continuous inclusion between spaces, pointwise multipliers, composition operators, integration operators to name a few (see for example \cite{Att,Axler1,Charpentier,Charpentiersehba,luecking,sehbastevic,Ueki,Vukotic,Zhao} and the references therein). These applications are our main motivation for considering these questions here.
\section{Presentation of the results}
We present in this section our main results and some applications.
\subsection{Carleson embeddings for $H^\Phi(\mathbb{C}_+)$ and $A_\alpha^\Phi(\mathbb{C}_+)$}
The complementary function $\Psi$ of the convex growth function $\Phi$, is the function defined from $\mathbb R_+$ onto itself by
\begin{equation}\label{complementarydefinition}
\Psi(s)=\sup_{t\in\mathbb R_+}\{ts - \Phi(t)\}.
\end{equation}
%We observe that if $\Phi\in \mathscr{U}^q$, then $\Psi$ is a growth function of lower type such that the function which $t\mapsto \frac{\Psi(t)}{t}$ is non-decreasing. 

The growth function $\Phi$ satisfies the $\Delta_2$-condition if there exists a constant $K>1$ such that, for any $t\ge 0$,
\begin{equation}\label{eq:delta2condition}
 \Phi(2t)\le K\Phi(t).\end{equation}
It follows easily from (\ref{uppertype}) that any growth function $\Phi\in \mathscr{U}$ satisfies the $\Delta_2$-condition.
We say that the growth function $\Phi$ satisfies the $\bigtriangledown_2-$condition whenever both $\Phi$ and its complementary function satisfy the $\Delta_2-$conditon.
\vskip .3cm
For any interval $I\subset \mathbb{R}$, we recall that the Carleson square above $I$ is the set 
$$Q_I:=\{z=x+iy\in \mathbb{C}: x\in I\,\,\,\textrm{and}\,\,\,0<y<|I|\}.$$
The following definition is adapted from \cite{sehba}.
\begin{defin}
Let $\Phi$ be a growth function. A positive Borel measure $\mu$ on $\mathbb{C}_+$ is called a $\Phi$-Carleson measure, if there is a constant $C>0$ such that for any finite interval $I\subset \mathbb{R}$, 
\Be\label{eq:phicarldef}
\mu(Q_I)\le \frac{C}{\Phi\left(\frac 1{|I|}\right)}.
\Ee
\end{defin}
Our first Carleson embedding result is as follows.
\begin{thm}\label{thm:main1}
Let $\Phi_1$ and $\Phi_2$ be two $\mathcal{C}^1$ convex growth functions with $\Phi_2\in\mathscr{U}$. Assume that $\Phi_1$ satisfies the $\nabla_2$-condition and that $\frac{\Phi_2}{\Phi_1}$ is nondecreasing. Let $\mu$ be a positive Borel measure on $\mathbb{C}_+$. Then the following assertions are equivalent.
\begin{itemize}
\item[(a)] $\mu$ is a $\Phi_2\circ\Phi_1^{-1}$-Carleson measure.
\item[(b)] There exists a constant $C>0$ such that 
\Be\label{eq:equivcarlhardy1}
\sup_{z=x+iy\in \mathbb{C}_+}\int_{\mathbb{C}_+}\Phi_2\left(\Phi_1^{-1}\left(\frac{1}{y}\right)\frac{y^2}{|z-\bar{w}|^2}\right)d\mu(w)\le C<\infty.
\Ee
\item[(c)] There exists a constant $K>0$ such that for any $f\in H^{\Phi_1}(\mathbb{C}_+)$, $f\neq 0$,
\Be\label{eq:equivcarlhardy2}
\int_{\mathbb{C}_+}\Phi_2\left(\frac{|f(z)|}{K\|f\|_{H^{\Phi_1}}^{lux}}\right)d\mu(z)<\infty.
\Ee
\end{itemize}
\end{thm}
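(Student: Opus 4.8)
The plan is to prove the chain of implications $(a)\Rightarrow(b)\Rightarrow(c)\Rightarrow(a)$, using the Poisson kernel as the bridge between the geometric condition on Carleson squares and the analytic condition on Hardy-Orlicz functions.

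\emph{Step 1: $(a)\Rightarrow(b)$.} Fix $z=x+iy\in\mathbb{C}_+$. The function $w\mapsto \Phi_2(\Phi_1^{-1}(1/y)\,y^2/|z-\bar w|^2)$ is a nonnegative decreasing function of $|z-\bar w|$, essentially constant on the Whitney-type regions $Q_{I_k}$ where $I_k$ is the interval centered at $x$ of length $2^k y$, $k\ge 0$ (with the innermost region $Q_{I_0}$). On $Q_{I_k}$ one has $|z-\bar w|\gtrsim 2^k y$, so the integrand is $\lesssim \Phi_2(\Phi_1^{-1}(1/y)\,2^{-2k})$. I would then estimate
\[
\int_{\mathbb{C}_+}\Phi_2\!\left(\Phi_1^{-1}\!\left(\tfrac1y\right)\tfrac{y^2}{|z-\bar w|^2}\right)d\mu(w)
\lesssim \sum_{k\ge 0}\Phi_2\!\left(\Phi_1^{-1}\!\left(\tfrac1y\right)2^{-2k}\right)\mu(Q_{I_k}),
\]
and then apply $(a)$: $\mu(Q_{I_k})\le C/(\Phi_2\circ\Phi_1^{-1})(1/(2^k y)) = C/\Phi_2(\Phi_1^{-1}(2^{-k}/y))$. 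Since $t\mapsto \Phi_1^{-1}(t)/t$ and the upper-type/convexity properties control $\Phi_2$, each term becomes comparable to $\Phi_2(\Phi_1^{-1}(1/y)2^{-2k})/\Phi_2(\Phi_1^{-1}(2^{-k}/y))$, which I expect to decay geometrically in $k$ using that $\Phi_1$ satisfies $\nabla_2$ (so $\Phi_1^{-1}$ has a lower-type bound, i.e. $\Phi_1^{-1}(2^{-k}/y)\lesssim 2^{-k\delta}\Phi_1^{-1}(1/y)$ for some $\delta>0$) together with the fact that $t\mapsto\Phi_2(t)/t$ is nondecreasing. Summing the geometric series gives the uniform bound $C$.

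\emph{Step 2: $(b)\Rightarrow(c)$.} Let $f\in H^{\Phi_1}(\mathbb{C}_+)$ with $\|f\|_{H^{\Phi_1}}^{lux}=1$ (by homogeneity). The key is the pointwise estimate coming from the subharmonicity / Poisson representation: for $z=x+iy$,
\[
\Phi_1(|f(z)|)\lesssim \int_{\mathbb{R}}\frac{y}{(x-t)^2+y^2}\,\Phi_1(|f(t)|)\,\frac{dt}{y}\cdot y \le \frac{C}{y}\int_{\mathbb{R}}P_y(x-t)\,\Phi_1(|f(t)|)\,dt,
\]
which, since $\int_{\mathbb R}\Phi_1(|f(t)|)dt\le 1$, yields $|f(z)|\lesssim \Phi_1^{-1}(C/y)$; more precisely one shows $|f(z)|\le \Phi_1^{-1}(C/y)$ after absorbing constants, using the doubling ($\Delta_2$) of $\Phi_1$. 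Having this, write $|f(z)| = |f(z)|^{1/2}|f(z)|^{1/2}$; bound one factor pointwise and keep the other. The cleanest route is: apply $\Phi_2$ to $|f(z)|/K$, and split $\mathbb{C}_+$ dyadically in the $y$-variable, $\Omega_j=\{2^{-j-1}<y\le 2^{-j}\}$, or better, exploit the off-diagonal estimate $(b)$ directly by noting that the reproducing-type kernel bound $|f(z)|\lesssim \Phi_1^{-1}(1/y)\int P$ can be upgraded to $|f(z)| \lesssim \int_{\mathbb{R}} \frac{y}{|z-t|^2}\Phi_1^{-1}(\ldots)$ — but the technically smoothest argument uses the integral over $\mathbb{C}_+$ of the nontangential maximal function. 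I would instead use the following: by convexity of $\Phi_2$ and the substitution via the Calderón-type reproducing formula adapted to Orlicz norms (as in \cite{sehba}), one reduces $\int_{\mathbb{C}_+}\Phi_2(|f(z)|/K)\,d\mu(z)$ to an integral of the form $\int_{\mathbb{C}_+}\big(\int_{\mathbb{R}}\Phi_2(\Phi_1^{-1}(1/y)\,y^2/|z-\bar w|^2)\,d\mu(z)\big)\Phi_1(|f(t)|)\,dt$ after Fubini, and then $(b)$ bounds the inner integral by $C$, leaving $C\int_{\mathbb R}\Phi_1(|f|)\le C$. Choosing $K$ large enough to absorb the implied constants makes the final integral $\le 1<\infty$.

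\emph{Step 3: $(c)\Rightarrow(a)$.} This is the standard test-function argument. For a fixed finite interval $I$ with center $x_I$ and length $|I|$, let $z_I = x_I + i|I|$ be the ``top'' of the Carleson box and take the test function $f_I(z) = \Phi_1^{-1}(1/|I|)\cdot \big(\frac{i|I|}{z - \bar z_I}\big)^{N}$ for a suitable exponent $N$ (depending on the upper type $q$ of $\Phi_1$, chosen so that $f_I\in H^{\Phi_1}$ with $\|f_I\|_{H^{\Phi_1}}^{lux}\lesssim 1$ uniformly in $I$; this uses the explicit computation $\int_{\mathbb R}|i|I|/(t-x_I - i|I|)|^{Np}\,dt\simeq |I|$ and the $\Delta_2$/upper type control to pass to Orlicz norms). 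On $Q_I$ one has $|z-\bar z_I|\simeq |I|$, hence $|f_I(z)|\simeq \Phi_1^{-1}(1/|I|)$ there. Plugging $f_I$ into $(c)$ gives
\[
\Phi_2\!\left(\frac{c\,\Phi_1^{-1}(1/|I|)}{K}\right)\mu(Q_I)\le \int_{Q_I}\Phi_2\!\left(\frac{|f_I(z)|}{K\|f_I\|^{lux}}\right)d\mu(z)\le C,
\]
and since $\Phi_2(c\,\Phi_1^{-1}(1/|I|)/K)\simeq \Phi_2(\Phi_1^{-1}(1/|I|)) = (\Phi_2\circ\Phi_1^{-1})(1/|I|)$ by the $\Delta_2$-condition on $\Phi_2$ (and $t\mapsto\Phi_2(t)/t$ nondecreasing for the lower bound), we get $\mu(Q_I)\le C/(\Phi_2\circ\Phi_1^{-1})(1/|I|)$, which is exactly $(a)$.

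\emph{Main obstacle.} The delicate point is Step 2, where the non-multiplicativity of $\Phi_1,\Phi_2$ blocks the classical trick of writing $|f|^q$ as a product and applying Hölder. One must replace it by a careful use of convexity together with the off-diagonal control $(b)$, and I expect the reproducing-formula manipulation (justifying the Fubini step and the pointwise substitution of the Poisson/Bergman kernel bound into $\Phi_2$, while keeping track of the constant $K$) to be where the real work lies; the upper-type and $\nabla_2$ hypotheses on $\Phi_1$ are exactly what is needed to make the kernel estimates and the geometric summation in Step 1 go through.
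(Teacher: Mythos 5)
Your Steps 1 and 3 are essentially sound sketches and coincide in spirit with the paper (the dyadic ring decomposition for (a)$\Rightarrow$(b) is the paper's Theorem \ref{thm:sphi}, and your test function is the paper's $f_{z_0}(w)=\Phi_1^{-1}(1/y_0)\,y_0^2/(w-\bar z_0)^2$ from Lemma \ref{lem:testfuncthardyo}); two small remarks there: in Step 1 no $\nabla_2$ is needed at all -- the elementary facts that $t\mapsto\Phi_1^{-1}(t)/t$ is nonincreasing and $t\mapsto\Phi_2(t)/t$ is nondecreasing already give the factor $2^{-k}$ per ring (the inequality you quote for $\Phi_1^{-1}$ also points in the unhelpful direction), and in Step 3 you assert a bound ``$\le C$'' uniform in $I$ where (c) literally gives only finiteness for each fixed test function, so the passage to a uniform constant deserves a word.

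The genuine gap is Step 2, which is the heart of the theorem. You propose to prove (b)$\Rightarrow$(c) by a Fubini/reproducing-formula argument: bound $\Phi_2(|f(z)|/K)$ by a boundary integral of $\Phi_1(|f(t)|)$ against the kernel appearing in (b), interchange the order of integration, and invoke (b) for the inner integral. This is exactly the classical power-function scheme, and it is precisely the step that non-multiplicativity of growth functions blocks: one cannot pull $\Phi_2$ of a Poisson integral apart into $\Phi_2(\text{kernel})\cdot\Phi_1(|f(t)|)$, and the expression you write down is not even well formed (the kernel in (b) is evaluated at the imaginary part of the free point $z$, which degenerates when that point is pushed to the boundary variable $t$), so (b) cannot be applied to the inner integral as claimed. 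You acknowledge that ``the real work lies'' there, but no mechanism is supplied, so the hard implication is missing. The paper's route is different and is worth noting: it proves (a)$\Rightarrow$(c) using (i) the nontangential maximal characterization $\|f\|_{H^{\Phi_1}}^{lux}\approx\|f^*\|_{L^{\Phi_1}}^{lux}$ (Theorem \ref{thm:nontangequivdef}), which is where the $\nabla_2$-condition on $\Phi_1$ enters via boundedness of the Hardy--Littlewood maximal operator on $L^{\Phi_1}$ (Proposition \ref{prop:boundedHLmax}); (ii) the covering estimate of Lemma \ref{lem:main11}, which converts the Carleson condition into $\mu(\{|f|>\lambda\})\le C\,\Phi_3(|\{f^*>\lambda\}|)$ with $\Phi_3(t)=1/\Phi_2\circ\Phi_1^{-1}(1/t)\in\mathscr U$ (Lemma \ref{lem:reverseprodphi}), exploiting superadditivity of $\Phi_3$ over the components of $\{f^*>\lambda\}$; and (iii) a layer-cake computation combining Chebyshev ($|E_\lambda|\le 1/\Phi_1(\lambda)$), the monotonicity of $\Phi_3(t)/t$, and $\Phi'(t)\backsimeq\Phi(t)/t$ to reduce everything to $\int_{\mathbb R}\Phi_1\bigl(f^*/\|f^*\|_{L^{\Phi_1}}^{lux}\bigr)\,dx\le 1$. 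To complete your proposal you would either have to reproduce machinery of this kind or make the Orlicz ``reproducing formula'' precise, and the latter is not a routine adaptation.
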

Note that the equivalence (a)$\Leftrightarrow$(b) holds even without the additional assumption ``$\Phi_1$ satisfies the $\nabla_2$-condition''. This assumption is needed only in the proof of the assertion (c) and this is due to our method of proof which involves the Hardy-Littlewood maximal function whose boundedness on Orlicz spaces is known only under our assumption (see \cite[Theorem 1.2.1]{koki}).
\vskip .1cm
\begin{defin}
Let $\Phi$ be a growth function and let $\alpha>-1$. A positive Borel measure $\mu$ on $\mathbb{C}_+$ is called a $(\Phi,\alpha)$-Carleson measure, if there is a constant $C>0$ such that for any finite interval $I\subset \mathbb{R}$, 
\Be\label{eq:phicarlbergdef}
\mu(Q_I)\le \frac{C}{\Phi\left(\frac 1{|I|^{2+\alpha}}\right)}.
\Ee
\end{defin}

We obtain the following Carleson embedding result for weighted Bergman-Orlicz spaces.
\begin{thm}\label{thm:main2}
Let $\Phi_1$ and $\Phi_2$ be two $\mathcal{C}^1$ convex growth functions with $\Phi_2\in\mathscr{U}$. Assume that $\Phi_1$ satisfies the $\nabla_2$-condition and that $\frac{\Phi_2}{\Phi_1}$ is nondecreasing. Let $\mu$ be a positive Borel measure on $\mathbb{C}_+$ and let $\alpha>-1$. Then the following assertions are equivalent.
\begin{itemize}
\item[(a)] There exists a constant $C_1>0$ such that for any interval $I\subset \mathbb{R}$,
\Be\label{eq:equivcarlberg1}
 \mu(Q_I)\le \frac{C_1}{\Phi_2\circ\Phi_1^{-1}\left(\frac{1}{|I|^{2+\alpha}}\right)}.
 \Ee
\item[(b)] There exists a constant $C_2>0$ such that 
\Be\label{eq:equivcarlberg2}
\sup_{z=x+iy\in \mathbb{C}_+}\int_{\mathbb{C}_+}\Phi_2\left(\Phi_1^{-1}\left(\frac{1}{y^{2+\alpha}}\right)\frac{y^{4+2\alpha}}{|z-\bar{w}|^{4+2\alpha}}\right)d\mu(w)\le C_2<\infty.
\Ee
\item[(c)] There exists a constant $C_3>0$ such that for any $f\in A_\alpha^{\Phi_1}(\mathbb{C}_+)$, $f\neq 0$,
\Be\label{eq:equivcarlberg3}
\int_{\mathbb{C}_+}\Phi_2\left(\frac{|f(z)|}{C_3\|f\|_{A_\alpha^{\Phi_1}}^{lux}}\right)d\mu(z)<\infty.
\Ee
\end{itemize}
\end{thm}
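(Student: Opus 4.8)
\textbf{Proof plan for Theorem \ref{thm:main2}.}

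The plan is to mirror the structure of Theorem \ref{thm:main1}, replacing the Hardy-space geometry (Carleson squares indexed by $\Phi_1^{-1}(1/|I|)$) by the Bergman-space geometry (indexed by $\Phi_1^{-1}(1/|I|^{2+\alpha})$), and to exploit the fact that the reproducing-kernel-type quantity $\dfrac{y^{2+\alpha}}{|z-\bar w|^{2+\alpha}}$ plays here the role that $\dfrac{y}{|z-\bar w|}$ plays for Hardy spaces. I would prove the cyclic chain (a)$\Rightarrow$(b)$\Rightarrow$(c)$\Rightarrow$(a).

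For (a)$\Rightarrow$(b): fix $z=x+iy$ and decompose $\mathbb{C}_+$ into the Carleson box $Q_0=Q_{I_0}$ with $I_0$ the interval centered at $x$ of length $y$, together with the dyadic annuli $Q_k\setminus Q_{k-1}$ where $Q_k=Q_{I_k}$, $|I_k|=2^k y$. On the $k$-th annulus one has $|z-\bar w|\gtrsim 2^k y$, so the integrand is bounded by $\Phi_2\!\left(\Phi_1^{-1}(y^{-(2+\alpha)})\,2^{-k(4+2\alpha)}\right)$. Using that $\Phi_2\circ\Phi_1^{-1}\in\mathscr{U}$ (this requires the convexity/upper-type bookkeeping: $\Phi_2\in\mathscr U$ and $\Phi_2/\Phi_1$ nondecreasing forces $\Phi_2\circ\Phi_1^{-1}$ to have the right monotonicity, and one checks $t\mapsto \Phi_2\circ\Phi_1^{-1}(t)/t$ is controlled), together with the upper-type-$q$ estimate, one dominates this by $C\,2^{-k(4+2\alpha)}\,q\text{-power factor}\cdot\Phi_2\circ\Phi_1^{-1}(y^{-(2+\alpha)})$, hence by $C\,2^{-k(4+2\alpha-q\varepsilon)}/\mu(Q_k)$-type terms after invoking (a). Summing the geometric series (the exponent $4+2\alpha$ beats the polynomial loss from the upper type, exactly as in the Hardy case where $2$ beats it) gives the uniform bound (b). The estimate on $Q_0$ is immediate from (a) since the integrand is $\le \Phi_2\circ\Phi_1^{-1}(y^{-(2+\alpha)})$ there.

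For (b)$\Rightarrow$(c): this is where the Bergman reproducing formula enters. For $f\in A_\alpha^{\Phi_1}(\mathbb{C}_+)$ with $\|f\|^{lux}_{A_\alpha^{\Phi_1}}\le 1$, I would use the pointwise estimate coming from the sub-mean-value property on a pseudohyperbolic ball (or the Bergman reproducing kernel) to get $|f(z)|\lesssim \Phi_1^{-1}\!\left(y^{-(2+\alpha)}\right)$ after renormalization, and more precisely an estimate of the form $\Phi_1(|f(z)|/C)\lesssim \frac{1}{y^{2+\alpha}}\int_{B(z)} \Phi_1(|f|)\,dV_\alpha$. Then, for general $f$, one wants to bound $\int \Phi_2(|f(z)|/C_3\|f\|)\,d\mu(z)$. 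The device (as in \cite{sehba}, and in the proof of Theorem \ref{thm:main1}) is to compare $|f|$ with the integral $\int_{\mathbb C_+}\frac{y^{2+\alpha}}{|z-\bar w|^{4+2\alpha}}|f(w)|\,dV_\alpha(w)$ type expression, apply Jensen's inequality with the convex function $\Phi_2$ against the probability measure obtained by normalizing the kernel, and then invoke (b) to pass $\Phi_2$ inside and integrate in $d\mu$; the $\nabla_2$-condition on $\Phi_1$ is used exactly as flagged in the remark after Theorem \ref{thm:main1}, namely to control a Hardy--Littlewood-type maximal operator on $L^{\Phi_1}_\alpha$ via \cite[Theorem 1.2.1]{koki}. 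I expect \textbf{this implication to be the main obstacle}, because it requires combining the Jensen/convexity trick, the maximal function bound, and the kernel estimate (b) in the right order without losing track of the normalization constant $C_3$; the non-multiplicativity of $\Phi_1,\Phi_2$ means one cannot simply factor constants out, so each application of upper type must be tracked.

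For (c)$\Rightarrow$(a): test (c) against the explicit family $f_I(z)=\left(\Phi_1^{-1}(|I|^{-(2+\alpha)})\right)\cdot\left(\frac{|I|}{z - \overline{z_I}}\right)^{N}$ (with $z_I$ the "center" of $Q_I$ at height $|I|$, and $N=N(\alpha,q)$ chosen large), which satisfies $\|f_I\|^{lux}_{A_\alpha^{\Phi_1}}\le C$ uniformly — this is the standard Bergman-test-function computation, using $\int_{\mathbb C_+}\frac{y^\alpha\,dV(z)}{|z-\overline{z_I}|^{Nq}}\lesssim |I|^{2+\alpha-Nq}$ for $Nq>2+\alpha$. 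On $Q_I$ one has $|f_I(z)|\gtrsim \Phi_1^{-1}(|I|^{-(2+\alpha)})$, so (c) forces $\int_{Q_I}\Phi_2\!\left(c\,\Phi_1^{-1}(|I|^{-(2+\alpha)})\right)d\mu \le $ const, i.e. $\mu(Q_I)\,\Phi_2\circ\Phi_1^{-1}(|I|^{-(2+\alpha)})\lesssim 1$ after absorbing the constant $c$ using that $\Phi_2\circ\Phi_1^{-1}\in\mathscr U$ satisfies $\Delta_2$. This is exactly (a). Finally, one notes as in the remark that (a)$\Leftrightarrow$(b) needed no $\nabla_2$-hypothesis, only the chain through (c) does.
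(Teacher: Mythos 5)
Your outer steps are fine: (a)$\Rightarrow$(b) by dyadic annuli is exactly the paper's Theorem \ref{thm:sphi} (proved there for general $s$, applied with $s=2+\alpha$), and your (c)$\Rightarrow$(a) test-function computation is the same maneuver as the paper's (c)$\Rightarrow$(b), which uses the explicit function of Lemma \ref{lem:testfunctbergo}; both variants are legitimate. The problem is the heart of the theorem, your (b)$\Rightarrow$(c), which you only sketch and yourself flag as ``the main obstacle.'' As described, the sketch does not work: if you bound $|f(z)|$ by a kernel average of $|f|$ and apply Jensen's inequality against the normalized kernel, then $\Phi_2$ ends up attached to $|f(w)|$, and after Fubini the $d\mu$-integration produces the \emph{bare} quantity $\int_{\mathbb{C}_+}\frac{(\Im z)^{2+\alpha}}{|z-\bar w|^{4+2\alpha}}\,d\mu(z)$, not the quantity in (\ref{eq:equivcarlberg2}), where $\Phi_2$ is wrapped around the kernel times $\Phi_1^{-1}\bigl((\Im w)^{-(2+\alpha)}\bigr)$. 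Since $\Phi_1,\Phi_2$ are not multiplicative, you cannot pull the kernel out of $\Phi_2$ to convert one condition into the other; a bare Berezin-type bound simply does not follow from (b) unless $\Phi_2\circ\Phi_1^{-1}$ is essentially a power. Also, even after Jensen you would be comparing $\Phi_2(|f(w)|/\cdot)$ with the hypothesis $\int_{\mathbb{C}_+}\Phi_1(|f|/\|f\|^{lux}_{\Phi_1,\alpha})\,dV_\alpha\le 1$, again a mismatch of Orlicz functions that the power-case trick ($|f|^q=(|f|^p)^{q/p}$) cannot be used to repair. So the crucial implication is genuinely missing.

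For comparison, the paper proves the equivalent implication (a)$\Rightarrow$(c) by a completely different mechanism: one replaces $|f|$ by the weighted dyadic maximal function $\mathcal{M}^d_\alpha f$ (using $|f|\lesssim \mathcal{M}_\alpha f$ and Lemma \ref{lem:levelsets}(i)), writes the left side of (\ref{eq:equivcarlberg3}) by the layer-cake formula with $\Phi_2'$, uses that each level set $E_\lambda$ of $\mathcal{M}^d_\alpha$ is a disjoint union of Carleson boxes over maximal dyadic intervals, so that the Carleson condition (a) gives $\mu(E_\lambda)\le C\,\Phi_3(|E_\lambda|_\alpha)$ with $\Phi_3(t)=1/\Phi_2\circ\Phi_1^{-1}(1/t)$ (Lemma \ref{lem:main21}), then exploits $\Phi_3\in\mathscr{U}$ (Lemma \ref{lem:reverseprodphi}) together with the weak-type bound $|E_\lambda|_\alpha\le 1/\Phi_1(\lambda)$ to get $\Phi_3(|E_\lambda|_\alpha)\le \frac{\Phi_1(\lambda)}{\Phi_2(\lambda)}|E_\lambda|_\alpha$, and finally uses $\Phi_2'(\lambda)\Phi_1(\lambda)/\Phi_2(\lambda)\approx\Phi_1'(\lambda)$ and the boundedness of $\mathcal{M}^d_\alpha$ on $L^{\Phi_1}_\alpha$ (Proposition \ref{prop:boundedHLmaxberg}, which is where the $\nabla_2$-condition on $\Phi_1$ enters) to close the estimate. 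Some argument of this type, handling the non-multiplicativity through the distribution function and the class $\mathscr{U}$ membership of $\Phi_3$, is what your proposal would need in place of the Jensen/reproducing-kernel plan.
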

%\vskip .3cm
\subsection{Application to some inclusion relations}
We apply the above results in giving exact conditions under which a Hardy-Orlicz space or a Bergman-Orlicz space as given above embeds continuously into another Bergman-Orlicz space. In the unit disc of $\mathbb{C}$ or the unit ball of $\mathbb{C}^n$, for the classical Hardy and Bergman spaces, these characterizations are well known (see \cite{ZZ,Zhu} and the references therein). Embedding relations between Bergman-Orlicz spaces of the unit ball have been obtained by the second author in \cite{sehba}.
\vskip .2cm
We first have the following result.
\begin{thm}\label{thm:embed1}
Let $\Phi_1$ and $\Phi_2$ be two $\mathcal{C}^1$ convex growth functions with $\Phi_2\in\mathscr{U}$, and let $\alpha>-1$. Assume that $\Phi_1$ satisfies the $\nabla_2$-condition and that $\frac{\Phi_2}{\Phi_1}$ is nondecreasing. Then the Hardy-Orlicz space $H^{\Phi_1}(\mathbb{C}_+)$ embeds continuously into the Bergman-Orlicz space $A_\alpha^{\Phi_2}(\mathbb{C}_+)$ if and only if there exists a constant $C>0$ such that for any $t\in (0,\infty)$,
\Be\label{eq:embedhardybergcond}
\Phi_1^{-1}(t)\leq \Phi_2^{-1}(Ct^{2+\alpha}).
\Ee
\end{thm}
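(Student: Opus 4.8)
The plan is to view the inclusion $H^{\Phi_1}(\mathbb{C}_+)\hookrightarrow A_\alpha^{\Phi_2}(\mathbb{C}_+)$ as a Carleson embedding for the weight $d\mu=dV_\alpha$ and to feed it into Theorem~\ref{thm:main1}. By definition $A_\alpha^{\Phi_2}(\mathbb{C}_+)$ is the space of holomorphic functions in $L^{\Phi_2}(\mathbb{C}_+,dV_\alpha)$, carrying the same Luxembourg quasi-norm, so a continuous inclusion $H^{\Phi_1}(\mathbb{C}_+)\hookrightarrow A_\alpha^{\Phi_2}(\mathbb{C}_+)$ is exactly the boundedness of $H^{\Phi_1}(\mathbb{C}_+)\hookrightarrow L^{\Phi_2}(\mathbb{C}_+,dV_\alpha)$. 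Because $\Phi_2\in\mathscr U$ satisfies the $\Delta_2$-condition, this boundedness is equivalent to assertion (c) of Theorem~\ref{thm:main1} with $\mu=V_\alpha$: the $\Delta_2$-property lets one pass from the mere finiteness in (c) to membership $f\in L^{\Phi_2}_\alpha(\mathbb{C}_+)$ for all $f\in H^{\Phi_1}(\mathbb{C}_+)$, whence the continuity of the inclusion by the closed graph theorem (both spaces being complete and metrizable under the $\Delta_2$/$\nabla_2$ hypotheses). Thus, by Theorem~\ref{thm:main1}, the desired embedding holds if and only if $V_\alpha$ is a $\Phi_2\circ\Phi_1^{-1}$-Carleson measure.

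Next I would make this condition explicit. For a finite interval $I\subset\mathbb{R}$ one computes, in the coordinates $z=x+iy$,
$$V_\alpha(Q_I)=\int_I\int_0^{|I|}y^\alpha\,dy\,dx=\frac{|I|^{2+\alpha}}{\alpha+1},$$
so, by the definition of a $\Phi$-Carleson measure, $V_\alpha$ is $\Phi_2\circ\Phi_1^{-1}$-Carleson if and only if there is $C>0$ with
$$(\Phi_2\circ\Phi_1^{-1})\!\left(\tfrac1{|I|}\right)\le\frac{C(\alpha+1)}{|I|^{2+\alpha}}\qquad\text{for every finite interval }I.$$
Since $|I|$ ranges over all of $(0,\infty)$, the substitution $t=1/|I|$ turns this into $\Phi_2\bigl(\Phi_1^{-1}(t)\bigr)\le C(\alpha+1)\,t^{2+\alpha}$ for all $t>0$. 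As $\Phi_2$ is a continuous, strictly increasing bijection of $[0,\infty)$ onto itself (convexity together with $\Delta_2$ forbids flat pieces), applying $\Phi_2^{-1}$ to both sides — and, for the converse implication, applying $\Phi_2$ — shows that this is equivalent to $\Phi_1^{-1}(t)\le\Phi_2^{-1}\bigl(C(\alpha+1)t^{2+\alpha}\bigr)$ for all $t>0$, which is \eqref{eq:embedhardybergcond}, as desired.

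The content beyond Theorem~\ref{thm:main1} is therefore only the elementary box estimate $V_\alpha(Q_I)=\tfrac{|I|^{2+\alpha}}{\alpha+1}$ together with a change of variables; the single point that deserves care is the reduction carried out in the first paragraph, namely identifying the continuous inclusion between the quasi-normed spaces with assertion (c) of Theorem~\ref{thm:main1}, which is exactly where the $\Delta_2$-condition on $\Phi_2$ (hence $\Phi_2\in\mathscr U$) is used.
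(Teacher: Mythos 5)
Your proposal is correct and follows essentially the same route as the paper: both reduce the embedding to the statement that $V_\alpha$ is a $\Phi_2\circ\Phi_1^{-1}$-Carleson measure via Theorem \ref{thm:main1} with $\mu=V_\alpha$, compute $V_\alpha(Q_I)=\frac{1}{1+\alpha}|I|^{2+\alpha}$, and translate the Carleson condition with $t=1/|I|$ into \eqref{eq:embedhardybergcond}. Your extra remarks on passing from finiteness in assertion (c) to a quantitative norm bound (via the $\Delta_2$-condition) address a point the paper leaves implicit, but do not change the argument.
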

\vskip .1cm
We remark that in the case $\Phi_1(t)=t^p$ and $\Phi_2(t)=t^q$ with $0<p<q<\infty$, the condition (\ref{eq:embedhardybergcond}) reduces to $\frac 1p=\frac{2+\alpha}{q}$.
\vskip .1cm
We also obtain the following.
\begin{thm}\label{thm:embed2}
Let $\Phi_1$ and $\Phi_2$ be two $\mathcal{C}^1$ convex growth functions with $\Phi_2\in\mathscr{U}$, and let $\alpha,\beta>-1$. Assume that $\Phi_1$ satisfies the $\nabla_2$-condition and that $\frac{\Phi_2}{\Phi_1}$ is nondecreasing. Then the Bergman-Orlicz space $A_\alpha^{\Phi_1}(\mathbb{C}_+)$ embeds continuously into the Bergman-Orlicz space $A_\beta^{\Phi_2}(\mathbb{C}_+)$ if and only if there exists a constant $C>0$ such that for any $t\in (0,\infty)$,
\Be\label{eq:embedbergbergcond}
\Phi_1^{-1}(t^{2+\alpha})\leq \Phi_2^{-1}(Ct^{2+\beta}).
\Ee
\end{thm}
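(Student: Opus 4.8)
The plan is to obtain Theorem~\ref{thm:embed2} as a direct consequence of the Bergman-Orlicz Carleson embedding, Theorem~\ref{thm:main2}, by specializing the measure to $d\mu=dV_\beta$ on $\mathbb{C}_+$. Since $A_\beta^{\Phi_2}(\mathbb{C}_+)$ is by definition the subspace of holomorphic functions of $L^{\Phi_2}(\mathbb{C}_+,dV_\beta)$, and every $f\in A_\alpha^{\Phi_1}(\mathbb{C}_+)$ is holomorphic, the continuity of the inclusion $A_\alpha^{\Phi_1}(\mathbb{C}_+)\hookrightarrow A_\beta^{\Phi_2}(\mathbb{C}_+)$ is equivalent to assertion (c) of Theorem~\ref{thm:main2} for $\mu=V_\beta$. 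The forward implication is immediate: if $\|f\|_{A_\beta^{\Phi_2}}^{lux}\le C\|f\|_{A_\alpha^{\Phi_1}}^{lux}$, then $\int_{\mathbb{C}_+}\Phi_2\bigl(|f|/(C\|f\|_{A_\alpha^{\Phi_1}}^{lux})\bigr)\,dV_\beta\le\int_{\mathbb{C}_+}\Phi_2\bigl(|f|/\|f\|_{A_\beta^{\Phi_2}}^{lux}\bigr)\,dV_\beta\le 1$. For the converse, one uses the convexity of $\Phi_2$ (so that $\Phi_2(t/\lambda)\le\lambda^{-1}\Phi_2(t)$ for $\lambda\ge1$) to pass from the finiteness in (\ref{eq:equivcarlberg3}) to the set inclusion $A_\alpha^{\Phi_1}(\mathbb{C}_+)\subseteq A_\beta^{\Phi_2}(\mathbb{C}_+)$, and then the closed graph theorem — these spaces being complete, with $\Phi_2\in\mathscr U$ guaranteeing the $\Delta_2$-condition — upgrades this to a continuous embedding. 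The hypotheses imposed on $\Phi_1$ and $\Phi_2$ in Theorem~\ref{thm:embed2} are precisely those under which Theorem~\ref{thm:main2} applies, so nothing extra is needed to invoke it.

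With this identification, Theorem~\ref{thm:main2} tells us that the embedding holds if and only if there is $C_1>0$ with
\[
V_\beta(Q_I)\le\frac{C_1}{\Phi_2\circ\Phi_1^{-1}\!\left(\tfrac{1}{|I|^{2+\alpha}}\right)}\qquad\text{for every finite interval }I\subset\mathbb{R}.
\]
The elementary computation
\[
V_\beta(Q_I)=\int_0^{|I|}\int_I y^\beta\,dx\,dy=|I|\int_0^{|I|}y^\beta\,dy=\frac{|I|^{2+\beta}}{1+\beta}
\]
then recasts condition (a) as
\[
\Phi_2\!\left(\Phi_1^{-1}\!\left(\tfrac{1}{|I|^{2+\alpha}}\right)\right)\le\frac{C_1(1+\beta)}{|I|^{2+\beta}},\qquad |I|\in(0,\infty).
\]

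Finally, setting $t=1/|I|$, which runs over all of $(0,\infty)$ as $I$ runs over all finite intervals, turns the last inequality into $\Phi_2\bigl(\Phi_1^{-1}(t^{2+\alpha})\bigr)\le C\,t^{2+\beta}$ for all $t>0$, with $C=C_1(1+\beta)$. Applying $\Phi_2^{-1}$ — which is a genuine inverse since, by the remark following the definition of $\mathscr U$, $\Phi_2$ may be taken to be a strictly increasing $\mathcal C^1$ bijection of $[0,\infty)$ — gives exactly $\Phi_1^{-1}(t^{2+\alpha})\le\Phi_2^{-1}(Ct^{2+\beta})$, i.e. (\ref{eq:embedbergbergcond}); conversely, applying $\Phi_2$ to (\ref{eq:embedbergbergcond}) and undoing the substitution recovers condition (a) for $\mu=V_\beta$. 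This closes the chain of equivalences. I expect the only genuinely delicate point to be the first one — making rigorous that continuity of the inclusion between the two Orlicz-type spaces is equivalent to the a priori weaker-looking modular Carleson condition (\ref{eq:equivcarlberg3}) — while the remaining steps amount to a single integral computation and a change of variable, the whole argument being entirely parallel to the Hardy-to-Bergman case, Theorem~\ref{thm:embed1}.
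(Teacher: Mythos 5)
Your proposal is correct and is essentially the paper's own argument: the paper proves Theorem \ref{thm:embed2} exactly as Theorem \ref{thm:embed1} (it explicitly leaves the adaptation to the reader), namely by applying Theorem \ref{thm:main2} to the measure $\mu=V_\beta$, computing $V_\beta(Q_I)=\frac{1}{1+\beta}|I|^{2+\beta}$, and translating the $(\Phi_2\circ\Phi_1^{-1},\alpha)$-Carleson condition into (\ref{eq:embedbergbergcond}) via the substitution $t=1/|I|$. Your extra care in identifying the continuous embedding with the modular condition (c) (convexity scaling plus closed graph) is a refinement of a point the paper simply takes for granted, so nothing differs in substance.
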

\vskip .1cm
It is easy to see that in the case $\Phi_1(t)=t^p$ and $\Phi_2(t)=t^q$ with $0<p<q<\infty$, the condition (\ref{eq:embedbergbergcond}) reduces to $\frac {2+\alpha}p=\frac{2+\beta}{q}$.

\subsection{Application to pointwise multipliers}
Let $X$ and $Y$ be two analytic function spaces which are metric spaces, with respective metrics $d_X$ and $d_Y$. An analytic function $g$ is said to be a multiplier from $X$ to $Y$, if there exists a constant $C>0$ such that for any $f\in X$, $$d_Y(fg,0)\leq Cd_X(f,0).$$
We denote by $\mathcal{M}(X,Y)$ the set of multipliers from $X$ to $Y$. 

Multipliers between usual Bergman spaces of the unit disc and the unit ball have been obtained in \cite{Att, Axler1, Axler2, luecking, Vukotic, Zhao}. In \cite{sehba}, the first author, using Carleson embeddings for Bergman-Orlicz spaces of the unit ball $\mathbb{B}^n$ of $\mathbb{C}^n$ characterized pointwise multipliers from $A_\alpha^{\Phi_1}(\mathbb{B}^n)$ to $A_\beta^{\Phi_2}(\mathbb{B}^n)$ where $\Phi_1$ and $\Phi_2$ are growth functions such that $\frac{\Phi_2}{\Phi_1}$ is nondecreasing and $\Phi_2$ is in some subclass $\tilde{\mathscr U}$ of $\mathscr U$. We provide here the same type of results for Hardy-Orlicz and Bergman-Orlicz spaces of the upper-half plane.

\vskip .2cm
We say a growth function $\Phi\in \mathscr U^q$ belongs to $\tilde{\mathscr U}$, if the following three conditions are satisfied.
\begin{itemize}
\item[($a_1$)] There exists a constant $C_1>0$ such that for any $0<s,t<\infty$,
\begin{equation}\label{eq:uppertypecondmulti1}
\Phi(st)\leq C_1\Phi(s)\Phi(t).
\end{equation}
\item[($a_2$)]  There exists a constant $C_2>0$ such that for any $a,b\geq 1$, 
\begin{equation}\label{eq:uppertypecondmulti2}
\Phi\left(\frac{a}{b}\right)\leq C_2\frac{\Phi(a)}{b^q}.
\end{equation}
\item[($a_3$)]  There exists a constant $C_3>0$ such that for any $0<a\le b\le 1$, 
\begin{equation}\label{eq:uppertypecondmulti3}
\Phi\left(\frac{a}{b}\right)\leq C_3\frac{\Phi(a)}{\Phi(b)}.
\end{equation}

\end{itemize}
Clearly, power functions are in $\tilde{\mathscr U}$. As nontrivial member of $\tilde{\mathscr U}$, we have the function $t\mapsto t^q\log^\alpha(C+t)$, where $q\geq 1$, $\alpha>0$ and the constant $C>0$ is large enough. 
\vskip .2cm
Let $\omega:(0,\infty)\longrightarrow (0,\infty)$ be a continuous function. An analytic function $f$ in $\mathbb C_+$ is said to be in $\mathcal H_\omega^\infty(\mathbb C_+)$ if
\begin{equation}\label{OmegaHinfdef}
||f||_{\mathcal H_\omega^\infty}:=\sup_{z\in \mathbb C_+}\frac{|f(z)|}{\omega(\Im z)}<\infty.
\end{equation}
We observe that $\mathcal H_\omega^\infty(\mathbb C_+)$ is a Banach space. 

The following result provides pointwise multipliers from Hardy-Orlicz spaces to Bergman-Orlicz spaces.
\begin{thm}\label{thm:main3}
Let $\Phi_1\in \mathscr U$ and $\Phi_2\in \tilde{\mathscr U}$. Assume that $\frac{\Phi_2}{\Phi_1}$ is non-decreasing. Let $\alpha>-1$ and define for $t\in (0,\infty)$, the function
$$\omega(t)=\frac{\Phi_2^{-1}\left(\frac{1}{t^{2+\alpha}}\right)}{\Phi_1^{-1}\left(\frac{1}{t}\right)}.$$
Then the following assertions hold.
\begin{itemize}
\item[(i)] If $\Phi_1$ satisfies the $\nabla_2$-condition, and $\omega$ is equivalent to $1$, then $$\mathcal{M}\left(H^{\Phi_1}(\mathbb{C}_+),A_\alpha^{\Phi_2}(\mathbb{C}_+)\right)=H^\infty(\mathbb{C}_+).$$
\item[(ii)] If $\omega$ is non-decreasing on $(0,\infty)$ and $\lim_{t\rightarrow 0}\omega(t)=0$, then $$\mathcal{M}\left(H^{\Phi_1}(\mathbb{C}_+),A_\alpha^{\Phi_2}(\mathbb{C}_+)\right)=\{0\}.$$
\item[(iii)] If $\Phi_1$ and $\Phi_2\circ\Phi_1^{-1}$ satisfies the $\nabla_2$-condition, and $\omega$ is non-increasing on $(0,\infty)$, then $$\mathcal{M}\left(H^{\Phi_1}(\mathbb{C}_+),A_\alpha^{\Phi_2}(\mathbb{C}_+)\right)=H_\omega^\infty(\mathbb{C}_+).$$

\end{itemize}
\end{thm}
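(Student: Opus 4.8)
The plan is to reduce everything to the single auxiliary measure $d\mu_g(z):=\Phi_2(|g(z)|)\,dV_\alpha(z)$ and feed it into the Carleson embedding Theorem \ref{thm:main1}, using Theorem \ref{thm:embed1} separately for part (i). Unravelling the definition of a multiplier, $g\in\mathcal{M}(H^{\Phi_1}(\mathbb{C}_+),A_\alpha^{\Phi_2}(\mathbb{C}_+))$ means precisely that $\|fg\|_{A_\alpha^{\Phi_2}}^{lux}\lesssim\|f\|_{H^{\Phi_1}}^{lux}$ for all $f\in H^{\Phi_1}(\mathbb{C}_+)$. In each of the three regimes the set on the right-hand side of the claimed identity ($H^\infty$, $\{0\}$, or $\mathcal{H}_\omega^\infty$) will be recognised as $\mathcal{H}_\omega^\infty(\mathbb{C}_+)$, and the proof splits into the inclusion $\mathcal{M}\subseteq\mathcal{H}_\omega^\infty$, which I would prove once and for all, and the reverse inclusion, where the cases diverge.

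For $\mathcal{M}(H^{\Phi_1}(\mathbb{C}_+),A_\alpha^{\Phi_2}(\mathbb{C}_+))\subseteq\mathcal{H}_\omega^\infty(\mathbb{C}_+)$ I would test on normalised reproducing-kernel-type functions. For $z_0=x_0+iy_0\in\mathbb{C}_+$ set $f_{z_0}(z)=\Phi_1^{-1}(1/y_0)\,\frac{y_0^2}{(z-\bar z_0)^2}$; using the monotonicity of $t\mapsto\Phi_1(t)/t$ and the upper type of $\Phi_1$ one checks $\|f_{z_0}\|_{H^{\Phi_1}}^{lux}\lesssim 1$, while plainly $|f_{z_0}(z_0)|\simeq\Phi_1^{-1}(1/y_0)$. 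Applying the multiplier inequality to $f_{z_0}$ and then the pointwise growth estimate $|h(z)|\lesssim\Phi_2^{-1}\!\big(1/(\Im z)^{2+\alpha}\big)\,\|h\|_{A_\alpha^{\Phi_2}}^{lux}$ for $h\in A_\alpha^{\Phi_2}(\mathbb{C}_+)$ (the Bergman-Orlicz analogue of $|h(x+iy)|\lesssim y^{-(2+\alpha)/p}\|h\|_{A_\alpha^p}$, obtained from the sub-mean value property of $\Phi_2(|h|)$ and Jensen's inequality) to $f_{z_0}g$, one gets $|f_{z_0}(z_0)|\,|g(z_0)|\lesssim\Phi_2^{-1}(1/y_0^{2+\alpha})$, hence $|g(z_0)|\lesssim\Phi_2^{-1}(1/y_0^{2+\alpha})/\Phi_1^{-1}(1/y_0)=\omega(y_0)$; that is, $g\in\mathcal{H}_\omega^\infty$.

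For the reverse inclusions in the first two cases: in (i), $\omega\simeq 1$ gives $\mathcal{H}_\omega^\infty(\mathbb{C}_+)=H^\infty(\mathbb{C}_+)$ with equivalent norms, so the previous step already yields $\mathcal{M}\subseteq H^\infty$; conversely $\omega\simeq 1$ forces (\ref{eq:embedhardybergcond}) (the constant being produced by the lower-type estimate for $\Phi_2^{-1}$ that comes from $\Phi_2\in\mathscr{U}$), so Theorem \ref{thm:embed1} gives $H^{\Phi_1}(\mathbb{C}_+)\hookrightarrow A_\alpha^{\Phi_2}(\mathbb{C}_+)$ continuously, and since $\|fg\|_{A_\alpha^{\Phi_2}}^{lux}\le\|g\|_\infty\|f\|_{A_\alpha^{\Phi_2}}^{lux}$, every $g\in H^\infty(\mathbb{C}_+)$ is a multiplier. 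In (ii), $\omega$ is continuous with $\lim_{t\to0}\omega(t)=0$, hence bounded near $0$; so any $g\in\mathcal{H}_\omega^\infty$ is bounded near $\mathbb{R}$ and $|g(x+iy)|\le\|g\|_{\mathcal{H}_\omega^\infty}\omega(y)\to0$ uniformly in $x$ as $y\to0$, so $g$ extends continuously to $\overline{\mathbb{C}_+}$ with boundary value $0$; the Schwarz reflection principle together with the identity theorem then force $g\equiv0$. Thus $\mathcal{H}_\omega^\infty(\mathbb{C}_+)=\{0\}$, and combined with the previous step $\mathcal{M}=\{0\}$.

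The real work is the reverse inclusion in (iii), where I would show that for $g\in\mathcal{H}_\omega^\infty(\mathbb{C}_+)$ the measure $\mu_g$ is a $\Phi_2\circ\Phi_1^{-1}$-Carleson measure. Bounding $|g|$ by $\omega$ and using the upper type of $\Phi_2$ gives $\mu_g(Q_I)\lesssim|I|\int_0^{|I|}\Phi_2(\omega(y))\,y^\alpha\,dy$, and after the substitution $y=1/s$ the problem becomes an estimate of $\int_{1/|I|}^\infty\Phi_2\!\big(\Phi_2^{-1}(s^{2+\alpha})/\Phi_1^{-1}(s)\big)\,s^{-2-\alpha}\,ds$. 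Splitting at the scales where $\Phi_1^{-1}(s)$ and $\Phi_2^{-1}(s^{2+\alpha})$ cross the value $1$, condition $(a_2)$ disposes of the large-$s$ range (giving the bound $\int_{1/|I|}^\infty(\Phi_1^{-1}(s))^{-q}\,ds$) and condition $(a_3)$ of the small-$s$ range, with the bounded intermediate range contributing harmlessly; the resulting integrals are controlled, uniformly in $I$, by $\lesssim 1/\big(|I|\,\Phi_2\circ\Phi_1^{-1}(1/|I|)\big)$, precisely because the $\nabla_2$-condition on $\Phi_2\circ\Phi_1^{-1}$ forces its lower index to exceed $1$, while the upper type of $\Phi_2$ bounds $\Phi_2\circ\Phi_1^{-1}(T)$ by $(\Phi_1^{-1}(T))^q$. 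Granting the Carleson bound, Theorem \ref{thm:main1} (whose hypotheses hold since $\Phi_1$ satisfies $\nabla_2$ and $\Phi_2\in\mathscr{U}$) yields $H^{\Phi_1}(\mathbb{C}_+)\hookrightarrow L^{\Phi_2}(d\mu_g)$, and the submultiplicativity $(a_1)$, $\Phi_2(|fg|)\le C_1\Phi_2(|f|)\Phi_2(|g|)$, gives $\int_{\mathbb{C}_+}\Phi_2(|fg|/\lambda)\,dV_\alpha\le C_1\int_{\mathbb{C}_+}\Phi_2(|f|/\lambda)\,d\mu_g$; choosing $\lambda\simeq\|f\|_{H^{\Phi_1}}^{lux}$ and renormalising by convexity of $\Phi_2$ gives $\|fg\|_{A_\alpha^{\Phi_2}}^{lux}\lesssim\|f\|_{H^{\Phi_1}}^{lux}$, so $g\in\mathcal{M}$. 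The one genuinely delicate point is this Carleson estimate for $\mu_g$: keeping the bounds uniform over all ranges of $y$, and in particular for large intervals $I$, and threading the non-multiplicative growth function $\Phi_2$ correctly through conditions $(a_1)$, $(a_2)$, $(a_3)$, is the crux; everything else is either soft (the reflection argument, continuity) or a direct appeal to Theorems \ref{thm:main1} and \ref{thm:embed1}.
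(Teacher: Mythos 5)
Your proposal is correct and follows essentially the same route as the paper (Lemmas \ref{lem:main31} and \ref{lem:main32}): necessity via the test functions of Lemma \ref{lem:testfuncthardyo} combined with the pointwise bound of Lemma \ref{lem:pointwiseberg}, part (i) via Theorem \ref{thm:embed1} and boundedness of $H^\infty$ multiplication, and part (iii) via conditions (\ref{eq:uppertypecondmulti1})--(\ref{eq:uppertypecondmulti3}), the $\nabla_2$-condition on $\Phi_2\circ\Phi_1^{-1}$ and the Carleson embedding Theorem \ref{thm:main1}. The only variations are minor: you verify the Carleson condition directly for $\Phi_2(|g|)\,dV_\alpha$ rather than for the paper's fixed majorant measure $dV(x+iy)/\bigl(y^2\Phi_2\circ\Phi_1^{-1}(1/y)\bigr)$ (the same estimates in a different order), and in (ii) you supply the boundary-uniqueness (reflection/identity theorem) step that the paper leaves implicit.
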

The next result provides pointwise multipliers between two different Bergman-Orlicz spaces.
\begin{thm}\label{thm:main4}
Let $\Phi_1\in \mathscr U$ and $\Phi_2\in  \tilde{\mathscr U}$. Assume that  $\frac{\Phi_2}{\Phi_1}$ is nondecreasing. Let $\alpha, \beta>-1$ and define for $t\in (0,\infty)$, the function
$$\omega(t)=\frac{\Phi_2^{-1}\left(\frac{1}{t^{2+\beta}}\right)}{\Phi_1^{-1}\left(\frac{1}{t^{2+\alpha}}\right)}.$$
Then the following assertions hold.
\begin{itemize}
\item[(i)] If $\Phi_1$ satisfies the $\nabla_2$-condition, and $\omega$ is equivalent to $1$, then $$\mathcal{M}\left(A_\alpha^{\Phi_1}(\mathbb{C}_+),A_\beta^{\Phi_2}(\mathbb{C}_+)\right)=H^\infty(\mathbb{C}_+).$$
\item[(ii)] If $\omega$ is non-decreasing on $(0,\infty)$ and $\lim_{t\rightarrow 0}\omega(t)=0$, then $$\mathcal{M}\left(A_\alpha^{\Phi_1}(\mathbb{C}_+),A_\beta^{\Phi_2}(\mathbb{C}_+)\right)=\{0\}.$$
\item[(iii)] If $\Phi_1$ and $\Phi_2\circ\Phi_1^{-1}$ satisfy the $\nabla_2$-condition, and $\omega$ is non-increasing on $(0,\infty)$, then $$\mathcal{M}\left(A_\alpha^{\Phi_1}(\mathbb{C}_+),A_\beta^{\Phi_2}(\mathbb{C}_+)\right)=H_\omega^\infty(\mathbb{C}_+).$$

\end{itemize}
\end{thm}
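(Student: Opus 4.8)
The plan is to run the same argument as in the proof of Theorem~\ref{thm:main3}, with the Hardy--Orlicz Carleson embedding (Theorem~\ref{thm:main1}) replaced by the Bergman--Orlicz one (Theorem~\ref{thm:main2}) and the pointwise estimates for $H^{\Phi_1}(\mathbb{C}_+)$ replaced by those for $A_\gamma^{\Phi}(\mathbb{C}_+)$. First I would record two ingredients used throughout: the sub-mean value estimate
\[
|h(z)|\lesssim \Phi^{-1}\!\left(\frac{C}{(\Im z)^{2+\gamma}}\right)\|h\|_{A_\gamma^{\Phi}}^{lux}\qquad\text{for } h\in A_\gamma^{\Phi}(\mathbb{C}_+),
\]
obtained by averaging the subharmonic function $\Phi(|h|)$ over a Euclidean disc of radius $\asymp\Im z$ and using that $\Phi\in\mathscr U$ makes $\Phi^{-1}$ essentially concave; and the normalized test functions $F_w(z)=\Phi_1^{-1}\!\left((\Im w)^{-(2+\alpha)}\right)\left(\frac{\Im w}{z-\bar w}\right)^{N}$ for a large integer $N>2+\alpha$, which --- as is already seen in the proof of Theorem~\ref{thm:main2} --- satisfy $\|F_w\|_{A_\alpha^{\Phi_1}}^{lux}\asymp 1$ and $|F_w(w)|\asymp\Phi_1^{-1}\!\left((\Im w)^{-(2+\alpha)}\right)$.

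To prove the inclusion $\mathcal{M}\big(A_\alpha^{\Phi_1}(\mathbb{C}_+),A_\beta^{\Phi_2}(\mathbb{C}_+)\big)\subseteq\mathcal{H}_\omega^\infty(\mathbb{C}_+)$ I would take a multiplier $g$, substitute $F_w$ into the multiplier inequality, and apply the sub-mean value estimate to $F_wg\in A_\beta^{\Phi_2}(\mathbb{C}_+)$ at the point $w$:
\[
\frac{|F_w(w)|\,|g(w)|}{\Phi_2^{-1}\!\left(C(\Im w)^{-(2+\beta)}\right)}\lesssim \|F_wg\|_{A_\beta^{\Phi_2}}^{lux}\lesssim \|F_w\|_{A_\alpha^{\Phi_1}}^{lux}\lesssim 1,
\]
so that $|g(w)|\lesssim \Phi_2^{-1}\!\left(C(\Im w)^{-(2+\beta)}\right)/|F_w(w)|\asymp\omega(\Im w)$, the constant being absorbed into $\Phi_2^{-1}$. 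This already settles case~(i) (there $\omega\asymp 1$, so $\mathcal{H}_\omega^\infty=H^\infty$) and the substantial half of case~(ii): there $\omega$ non-decreasing forces $g$ to be bounded on each strip $\{0<\Im z<y_0\}$, while $\omega(0^+)=0$ forces $g$ to extend continuously to $\mathbb{R}$ with boundary value $0$, so Schwarz reflection gives $g\equiv 0$ and $\mathcal{M}=\{0\}$.

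For the reverse inclusion, case~(ii) is trivial; so suppose we are in case~(i) or~(iii), fix $g$ in the claimed space, and fix $f\in A_\alpha^{\Phi_1}(\mathbb{C}_+)$ with $\|f\|_{A_\alpha^{\Phi_1}}^{lux}=1$. The goal is a constant $C$ with $\int_{\mathbb{C}_+}\Phi_2(|fg|/C)\,dV_\beta\le 1$. Using $|g(z)|\le\|g\|_{\mathcal{H}_\omega^\infty}\omega(\Im z)$, monotonicity of $\Phi_2$, and condition $(a_1)$ of $\tilde{\mathscr U}$, I would estimate
\[
\int_{\mathbb{C}_+}\Phi_2\!\left(\frac{|fg|}{C\|g\|_{\mathcal{H}_\omega^\infty}}\right)dV_\beta\le C_1\int_{\mathbb{C}_+}\Phi_2\!\left(\frac{|f|}{C}\right)\Phi_2\big(\omega(\Im z)\big)\,dV_\beta=C_1\int_{\mathbb{C}_+}\Phi_2\!\left(\frac{|f|}{C}\right)d\mu_g,
\]
with $d\mu_g(z)=\Phi_2(\omega(\Im z))\,dV_\beta(z)$. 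By Theorem~\ref{thm:main2}, (a)$\Rightarrow$(c), once $\mu_g$ is shown to be a $(\Phi_2\circ\Phi_1^{-1},\alpha)$-Carleson measure the right-hand side is $\le 1$ for $C$ large (convexity of $\Phi_2$ absorbing $C_1$), which finishes the proof. So the crux reduces to estimating $\mu_g(Q_I)=|I|\int_0^{|I|}\Phi_2(\omega(y))\,y^\beta\,dy$: writing $\omega(y)=\Phi_2^{-1}(y^{-(2+\beta)})/\Phi_1^{-1}(y^{-(2+\alpha)})$, splitting the range of integration according to whether this quotient is $\ge 1$ or $\le 1$, and applying $(a_2)$ on the first piece and $(a_3)$ on the second (together with $t\mapsto\Phi_i(t)/t$ non-decreasing and the upper type of $\Phi_2$), I expect to reduce matters to a couple of explicit power-type integrals which, after the substitution $s=y^{-(2+\alpha)}$, sum to the required bound $\mu_g(Q_I)\lesssim 1/\big((\Phi_2\circ\Phi_1^{-1})(|I|^{-(2+\alpha)})\big)$, the hypothesis that $\omega$ be non-increasing being exactly what orients these monotonicities. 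In case~(i), $\mu_g\asymp V_\beta$ and it is cleaner to argue directly: $\omega\asymp1$ is equivalent, by the computation underlying Theorem~\ref{thm:embed2}, to $\Phi_1^{-1}(t^{2+\alpha})\le\Phi_2^{-1}(Ct^{2+\beta})$, hence to $A_\alpha^{\Phi_1}(\mathbb{C}_+)\hookrightarrow A_\beta^{\Phi_2}(\mathbb{C}_+)$, and then homogeneity and monotonicity of the Luxembourg quasi-norm give $\|fg\|_{A_\beta^{\Phi_2}}^{lux}\le\|g\|_\infty\|f\|_{A_\beta^{\Phi_2}}^{lux}\lesssim\|g\|_\infty\|f\|_{A_\alpha^{\Phi_1}}^{lux}$, that is $H^\infty(\mathbb{C}_+)\subseteq\mathcal{M}$.

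I expect the main obstacle to be exactly this Carleson-measure estimate for $\mu_g$ in case~(iii): because the argument of $\Phi_2$ in $\Phi_2(\omega(y))$ need not remain on one side of $1$, passing from $(a_2)$/$(a_3)$ to a clean bound of the shape $1/\big((\Phi_2\circ\Phi_1^{-1})(|I|^{-(2+\alpha)})\big)$ needs both the regime splitting and a verification that each resulting power-type integral converges and telescopes against the outer factor $|I|$. It is here that all the structural hypotheses get consumed: $\Phi_2\in\tilde{\mathscr U}$, $\omega$ non-increasing, and the $\nabla_2$-conditions on $\Phi_1$ and on $\Phi_2\circ\Phi_1^{-1}$ --- the last two entering through the application of Theorem~\ref{thm:main2} and the boundedness of the Hardy--Littlewood maximal operator used in its proof. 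The remaining points, namely the tail estimate yielding $\|F_w\|^{lux}\asymp 1$ and the Schwarz-reflection step in~(ii), are routine.
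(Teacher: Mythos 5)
Your proposal follows essentially the same route as the paper: the forward inclusions come from the pointwise estimate of Lemma \ref{lem:pointwiseberg} applied to $fg$ and tested against the Bergman test functions of Lemma \ref{lem:testfunctbergo}; case (i) reduces to the embedding given by Theorem \ref{thm:embed2}; and case (iii) uses $(a_1)$--$(a_3)$ to reduce the multiplier bound to the $(\Phi_2\circ\Phi_1^{-1},\alpha)$-Carleson property of a measure with density comparable to $\Phi_2(\omega(\Im z))(\Im z)^\beta$, i.e.\ to $\frac{1}{y^{2}\Phi_2\circ\Phi_1^{-1}(y^{-(2+\alpha)})}$, verified by the same dyadic computation exploiting the $\nabla_2$-condition on $\Phi_2\circ\Phi_1^{-1}$, followed by Theorem \ref{thm:main2}. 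The remaining differences are cosmetic: your test-function exponent $N$ instead of $4+2\alpha$, your regime split by $\omega(y)\gtrless 1$ instead of the paper's split at $\Im z=1$, and your Schwarz-reflection argument in (ii), which is a more explicit justification of the limiting step the paper states tersely.
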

\vskip .1cm
In the above two results, we require $\Phi_1$ to satisfy the $\nabla_2$-condition because we aim to apply Theorem \ref{thm:main1} and Theorem \ref{thm:main2} where this hypothesis is used. In assertion (iii) of these results, we also require $\Phi_2\circ\Phi_1^{-1}$ to satisfy the $\nabla_2$-condition. This is needed to prove that the measure $$d\mu(x+iy)=\frac{dxdy}{y^2\Phi_2\circ\Phi_1^{-1}\left(\frac 1{y^{2+\alpha}}\right)}$$ appearing in our proofs is a  $\left(\Phi_2\circ\Phi_1^{-1},\alpha\right)$-Carleson measure. In the case where this condition does not hold, it is easy to exhibit an example of $\Phi_1$ and $\Phi_2$ for which the measure $\mu$ is not a  $\left(\Phi_2\circ\Phi_1^{-1},\alpha\right)$-Carleson measure.
\vskip .3cm
In the next section, we introduce more definitions and present some results that we need in our presentation. In Section 4, we present the proofs of the Carleson embeddings results; in Section 5, we prove the results on the continuous inclusion of a Hardy-Orlicz or Bergman-Orlicz space into another Bergman-Orlicz space. Section 6 contains the proofs of the pointwise multipliers results. In the last section, we conclude our presentation, taking advantage of this part to present the corresponding weak-type results.
\vskip .3cm
As usual, given two positive quantities $A$ and $B$, the notation $A\lesssim B$
means that for some positive constant $C$, $A\le CB$. When $A\lesssim B$ and $B\lesssim A$, we write $A\thickapprox B$. In general $C$ or $C_s$, $s\in \mathbb{R}$ will denote a constant (depending only on the underlined variable) whose value is not necessarily the same for different occurrences. 
\section{Some useful facts}
We present in this section some useful results needed in our presentation.
\subsection{Some properties of growth functions}
We recall that a growth function $\Phi$ is of lower type $p$ if we can find $p > 0$ and $C>0$ such that, for $s>0$ and $0<t\le 1$,
\begin{equation}\label{lowertype}
 \Phi(st)\le Ct^p\Phi(s).\end{equation}
%We say that $\Phi$ is of lower type $p$ (resp. upper type $q$) when (\ref{lowertype}) (resp. (\ref{uppertype})) is satisfied.
We denote by $\mathscr{L}_p$ the set of growth functions $\Phi$ of lower type $p$,  $0<p\le 1$, such that the function $t\mapsto \frac{\Phi(t)}{t}$ is non-increasing. We write $$\mathscr{L}=\bigcup_{0<p\leq 1}\mathscr{L}_p.$$
We recall with \cite[Proposition 2.1]{sehbatchoundja} that $\Phi\in \mathscr{L}_p$ if and only if $\Phi^{-1}\in \mathscr{U}^{1/p}$.
\vskip .2cm
We recall that for $\Phi$ a $\mathcal C^1$ growth function, the lower and the upper indices of $\Phi$ are respectively defined by
$$a_\Phi:=\inf_{t>0}\frac{t\Phi^\prime(t)}{\Phi(t)}\,\,\,\textrm{and}\,\,\,b_\Phi:=\sup_{t>0}\frac{t\Phi^\prime(t)}{\Phi(t)}.$$
We also recall that if $\Phi$ is convex, then $1\le a_\Phi\le b_\Phi<\infty$. Following \cite[Lemma 2.6]{DHZZ} we have that a convex growth function satisfies the $\nabla_2-$condition if and only if $1< a_\Phi\le b_\Phi<\infty$. 
Let us observe that if $\Phi$ is a $\mathcal C^1$ growth function, then the function $\frac{\Phi(t)}{t^{a_\Phi}}$ is increasing while the function $\frac{\Phi(t)}{t^{b_\Phi}}$ is decreasing. These observations imply in particular that if $\Phi$ is $\mathcal{C}^1$ convex growth function that satisfies the  $\nabla_2-$condition, then $\Phi\in \mathscr{U}$.

The following will be useful.
\begin{lem}\label{lem:dini}
Let $\Phi$ be a convex growth function that satisfies the $\Delta_2$-condition. Then the following assertions are equivalent.
\begin{itemize}
\item[(a)] $\Phi$ satisfies the $\nabla_2$-condtion. 
\item[(b)] There is a constant $C_1>0$ such that for any $t>0$,
\Be\label{eq:dini}
\int_0^t\frac{\Phi(s)}{s^2}ds\le C_1\frac{\Phi(t)}{t}.
\Ee
\item[(c)] There is a constant $C_2>1$ such that for any $t>0$, $\Phi(C_2t)\ge 2C_2\Phi(t)$.
\end{itemize}
\end{lem}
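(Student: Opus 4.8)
The plan is to prove Lemma~\ref{lem:dini} by establishing the cycle of implications (a)$\Rightarrow$(b)$\Rightarrow$(c)$\Rightarrow$(a), using throughout the equivalent description of the $\nabla_2$-condition for convex growth functions recalled just above the statement, namely that $\Phi$ satisfies $\nabla_2$ if and only if $1<a_\Phi\le b_\Phi<\infty$, together with the monotonicity facts that $\Phi(t)/t^{a_\Phi}$ is increasing and $\Phi(t)/t^{b_\Phi}$ is decreasing.

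For (a)$\Rightarrow$(b): assuming $a_\Phi>1$, I would write $\int_0^t \frac{\Phi(s)}{s^2}\,ds = \int_0^t \frac{\Phi(s)}{s^{a_\Phi}}\cdot s^{a_\Phi-2}\,ds$ and use that $s\mapsto \Phi(s)/s^{a_\Phi}$ is increasing, so it is bounded above on $(0,t)$ by $\Phi(t)/t^{a_\Phi}$; pulling this out, the remaining integral $\int_0^t s^{a_\Phi-2}\,ds$ converges precisely because $a_\Phi-2>-1$, giving the value $t^{a_\Phi-1}/(a_\Phi-1)$, and the product is $\frac{1}{a_\Phi-1}\cdot\frac{\Phi(t)}{t}$, which is (b) with $C_1=1/(a_\Phi-1)$. (One should note $\Phi$ being $\mathcal C^1$ convex guarantees $a_\Phi$ is well-defined; if one prefers to avoid indices here, the Dini-type bound can alternatively be derived directly from condition (c) of the $\nabla_2$-characterization, but routing through $a_\Phi$ is cleanest.)

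For (b)$\Rightarrow$(c): from the Dini estimate I would bound below the left-hand integral using the doubling part of the range. Since $t\mapsto\Phi(t)/t$ is nondecreasing (as $\Phi\in$ the relevant class, or more simply by convexity with $\Phi(0)=0$), for $s\in(t,\lambda t)$ with $\lambda>1$ we have $\Phi(s)/s\ge \Phi(t)/t$, hence
\Be
\int_0^{\lambda t}\frac{\Phi(s)}{s^2}\,ds\ge \int_t^{\lambda t}\frac{\Phi(s)}{s^2}\,ds\ge \frac{\Phi(t)}{t}\int_t^{\lambda t}\frac{ds}{s}=\frac{\Phi(t)}{t}\log\lambda.
\Ee
On the other hand (b) applied at $\lambda t$ gives $\int_0^{\lambda t}\frac{\Phi(s)}{s^2}\,ds\le C_1\frac{\Phi(\lambda t)}{\lambda t}$. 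Combining, $\frac{\Phi(t)}{t}\log\lambda\le C_1\frac{\Phi(\lambda t)}{\lambda t}$, i.e. $\Phi(\lambda t)\ge \frac{\lambda\log\lambda}{C_1}\Phi(t)$. Choosing $\lambda$ large enough that $\frac{\log\lambda}{C_1}\ge 2$ (so $\lambda\ge e^{2C_1}$), we get $\Phi(\lambda t)\ge 2\lambda\Phi(t)$ for all $t>0$, which is (c) with $C_2=\lambda$.

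For (c)$\Rightarrow$(a): given $C_2>1$ with $\Phi(C_2 t)\ge 2C_2\Phi(t)$ for all $t>0$, iterate to get $\Phi(C_2^n t)\ge (2C_2)^n\Phi(t)$. Fix $t>0$ and for arbitrary $s\ge t$ choose the integer $n\ge 0$ with $C_2^n t\le s< C_2^{n+1}t$. Using that $\Phi(s)/s$ is nondecreasing downward — more precisely that $\Phi$ is nondecreasing so $\Phi(s)\ge \Phi(C_2^n t)\ge (2C_2)^n\Phi(t)$ — while $s< C_2^{n+1}t$ gives $(s/t)<C_2^{n+1}$, hence $C_2^n> (s/t)/C_2$ and $(2C_2)^n=(C_2^n)^{\log_{C_2}(2C_2)}>\big((s/t)/C_2\big)^{1+\log_{C_2}2}$. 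Setting $q=1+\log_{C_2}2>1$ this yields $\Phi(s)\ge c (s/t)^{q}\Phi(t)$ for a constant $c=C_2^{-q}$, valid for all $s\ge t$; equivalently, writing $s=ut$ with $u\ge 1$, $\Phi(ut)\ge c\,u^q\Phi(t)$. This lower-type-type growth at exponent $q>1$ forces $a_\Phi\ge q>1$: indeed integrating $\Phi'(s)\asymp \Phi(s)/s$ or directly, $t\Phi'(t)/\Phi(t)=\lim_{u\downarrow1}\frac{\Phi(ut)-\Phi(t)}{(u-1)\Phi(t)}\ge \lim_{u\downarrow1}\frac{c u^q-1}{u-1}$, and choosing $C_2$ in (c) one may also invoke the stated equivalence ``$\nabla_2 \iff 1<a_\Phi$'' directly once the inequality $\Phi(ut)\ge c u^q\Phi(t)$ is in hand (this says $t\mapsto \Phi(t)/t^{q}$ is, up to the constant $c$, nonincreasing, whence $b_{\Phi^{-1}}$-type bounds give $a_\Phi>1$). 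By the recalled characterization, $\Phi$ then satisfies the $\nabla_2$-condition, closing the cycle.

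The main obstacle is the careful bookkeeping in (c)$\Rightarrow$(a): turning the single discrete doubling inequality into a genuine power-type lower bound with exponent strictly bigger than $1$, and then cleanly converting that into the statement $a_\Phi>1$ (rather than merely $a_\Phi\ge1$) so as to legitimately invoke the $\nabla_2$-characterization. The implications (a)$\Rightarrow$(b) and (b)$\Rightarrow$(c) are short once one commits to working with the index $a_\Phi$ and the elementary monotonicity of $\Phi(t)/t$; the one subtlety there is making sure the integral $\int_0^t \Phi(s)s^{-2}\,ds$ is finite, which is automatic from $a_\Phi>1$ but would fail without the $\nabla_2$-hypothesis — consistent with the claimed equivalence.
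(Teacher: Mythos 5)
Your (a)$\Rightarrow$(b) and (b)$\Rightarrow$(c) are correct and essentially the paper's own arguments: in (a)$\Rightarrow$(b) the paper sums dyadically where you integrate, using the same monotonicity of $\Phi(s)/s^{p}$ for the lower index $p>1$, and in (b)$\Rightarrow$(c) it integrates over $[t/d,\,t/2]$ instead of $[t,\lambda t]$ — cosmetic differences only. The genuine gap is in (c)$\Rightarrow$(a). Iteration does give $\Phi(ut)\ge c\,u^{q}\Phi(t)$ for $u\ge 1$ with $q=1+\log_{C_2}2>1$, but necessarily with a constant $c=C_2^{-q}<1$, and this constant defeats both of your proposed ways of concluding $a_\Phi>1$. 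The difference-quotient bound $t\Phi'(t)/\Phi(t)\ge\lim_{u\downarrow 1}\frac{cu^{q}-1}{u-1}$ is vacuous: since $c<1$ the numerator tends to $c-1<0$ while the denominator tends to $0^{+}$, so the right-hand side is $-\infty$. The alternative remark is also off: $\Phi(ut)\ge cu^{q}\Phi(t)$ says $\Phi(t)/t^{q}$ is quasi-increasing up to the factor $c$ (not nonincreasing), and a monotonicity statement valid only up to a multiplicative constant gives no pointwise control of $t\Phi'(t)/\Phi(t)$: the ratio could a priori dip arbitrarily close to $1$ on short ranges of $t$ while the constant-loss growth bound survives. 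Ruling that out is possible, but only through a quantitative use of convexity (bounding how fast $t\Phi'(t)/\Phi(t)$ can descend to a value near $1$), which you do not supply; as written you only get $a_\Phi\ge 1$, which holds for every convex growth function and is not enough to invoke the characterization $\nabla_2\Leftrightarrow 1<a_\Phi\le b_\Phi<\infty$.

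The paper closes this implication by a short duality argument that avoids indices altogether, and you may want to adopt it: since in this paper $\nabla_2$ means by definition that both $\Phi$ and its complementary function $\Psi$ satisfy $\Delta_2$ (and $\Phi\in\Delta_2$ is a standing hypothesis), set $\Phi_1(t)=\frac{1}{2C_2}\Phi(C_2t)$, so that (c) reads $\Phi\le\Phi_1$. A direct computation from the definition of the complementary function gives $\Psi_1(u)=\frac{1}{2C_2}\Psi(2u)$, and since complementation reverses order, $\Psi_1\le\Psi$, i.e. $\Psi(2u)\le 2C_2\Psi(u)$ for all $u\ge 0$; thus $\Psi\in\Delta_2$ and hence $\Phi\in\nabla_2$. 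Either insert this argument, or replace your asserted step by an honest convexity-based proof that (c) forces $\inf_{t>0}t\Phi'(t)/\Phi(t)>1$.
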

\begin{proof}
We prove that (a)$\Rightarrow$(b)$\Rightarrow$(c)$\Rightarrow$(a).
\vskip .1cm
 (a)$\Rightarrow$(b): Assume that $\Phi$ satisfies the $\nabla_2$-condtion. We start by observing that
\Beas
\int_0^t\frac{\Phi(s)}{s^2}ds &=& \sum_{j=0}^\infty\int_{2^{-j-1}t}^{2^{-j}t}\frac{\Phi(s)}{s^2}ds\\ &\le& \sum_{j=0}^\infty\frac{\Phi(2^{-j}t)}{2^{-2(j+1)}t^2}2^{-j-1}t.
\Eeas
Let $p$ be the lower indice of $\Phi$. As $\Phi$ satisfies the $\nabla_2$-condtion, we have that $p>1$. As $t\rightarrow \frac{\Phi(t)}{t^p}$ is increasing, we obtain that for $j\ge 0$, $\Phi(2^{-j}t)\le 2^{-jp}\Phi(t)$. Hence
\Beas
\int_0^t\frac{\Phi(s)}{s^2}ds  &\le& 2\frac{\Phi(t)}{t}\sum_{j=0}^\infty 2^{-j(p-1)}\\ &\lesssim& \frac{\Phi(t)}{t}.
\Eeas
\vskip .1cm
(b)$\Rightarrow$(c): Assume that $\Phi$ satisfies (\ref{eq:dini}), i.e. 
 $~\forall~t>0 ~$,
\[  \int_0^t\frac{\Phi(s)}{s^2}ds\le C_1\frac{\Phi(t)}{t}. \]
Let $d>2$ be fixed. As the function
 $t\rightarrow \dfrac{\Phi(t)}{t}$ is nondecreasing, we have that
 \[ \int_0^t\frac{\Phi(s)}{s^2}ds \geq \int_\frac{t}{d}^\frac{t}{2}\frac{\Phi(s)}{s}\frac{ds}{s} \geq \frac{\Phi(\frac{t}{d})}{\frac{t}{d}}\int_\frac{t}{d}^\frac{t}{2} \frac{ds}{s}= \frac{\Phi(\frac{t}{d})}{\frac{t}{d}}\ln(\frac{d}{2})    \]
Hence \[d\Phi(\frac{t}{d}) \leq \frac{C_1}{\ln(\frac{d}{2})}\Phi(t).\]   
Let us choose $d\geq 2e^{2C_1}$ such that $\dfrac{C_1}{\ln(\frac{d}{2})} \leq \dfrac{1}{2}$. Then
\[   d\Phi(\frac{t}{d}) \leq \frac{1}{2}\Phi(t). \]   
That is \[ 2d\Phi(u) \leq \Phi(du).      \] 
\vskip .1cm
(c)$\Rightarrow$(a): Assume that there exists $C_2>1$ such that $~\forall~t>0$, $\Phi(C_2t)\ge 2C_2\Phi(t)$. We only have to prove that the complementary function $\Psi$ of $\Phi$ satisfies the $\Delta_2$-condition. 
\vskip .1cm 
Let $~t>0$. Put \[ \Phi_{1}(t)= \frac{1}{2C_2}\Phi(C_2t).     \]
Then $\Phi_{1}$ belongs to $\mathscr{U}$. Let $\Psi_{1}$ be the complementary function of $\Phi_{1}$. We have that
for any $u \geq 0$, \[\Psi_{1}(u) =\sup_{t\geq 0}\{ut- \Phi_{1}(t)  \}= \frac{1}{2C_2}\Psi(2u).    \]
Hence
$$\begin{array}{rcl}
\Phi(C_2t)\ge 2C_2\Phi(t) &\Leftrightarrow &
\Phi(t) \leq \Phi_{1}(t) \\\\
&\Rightarrow&
\Psi_{1}(u) \leq \Psi(u) \\\\
&\Rightarrow&
\Psi(2u) \leq 2C_2 \Psi(u).
\end{array}$$ 
Thus $\Psi$ satisfies the $\Delta_{2}$-condition.
As $\Phi$ and its complementary function $\Psi$ satisfy the $\Delta_{2}$-condition, we conclude that $\Phi$ satisfies the $\nabla_2$-condtion.
\end{proof}
\begin{lem}\label{lem:reverseprodphi}
Let $\Phi_1,\Phi_2$ be two convex growth functions.  Assume that $\Phi_2\in \mathscr{U}^q$ and that $\frac{\Phi_2}{\Phi_1}$ is nondecreasing. Then the function $\Phi_3$ defined by $\Phi_3(0)=0$ and $$\Phi_3(t)=\frac{1}{\Phi_2\circ\Phi_1^{-1}\left(\frac 1t\right)},\,\,\,\textrm{for}\,\,\,t>0$$
belongs to the class $\mathscr{U}$.
\end{lem}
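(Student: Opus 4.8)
The plan is to reduce everything to showing that the single growth function $g:=\Phi_2\circ\Phi_1^{-1}$ already lies in $\mathscr U^q$, and then to observe that the ``reciprocal reflection'' $t\mapsto 1/g(1/t)$ does not leave $\mathscr U^q$. Here $\Phi_1^{-1}$ denotes the (right) inverse of $\Phi_1$; since $\Phi_1$ and $\Phi_2$ are growth functions, so are $\Phi_1^{-1}$ and $g$, and by definition $\Phi_3(t)=1/g(1/t)$ for $t>0$. As $q\ge 1$ (because $\Phi_2\in\mathscr U^q$), once $g\in\mathscr U^q$ and the reflection step are established we obtain $\Phi_3\in\mathscr U^q\subseteq\mathscr U$.

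First I would check that $g$ is of upper type $q$. The one substantive point is that convexity of $\Phi_1$, which implies that $t\mapsto\Phi_1(t)/t$ is nondecreasing, makes $s\mapsto\Phi_1^{-1}(s)/s$ nonincreasing, and hence $\Phi_1^{-1}(\lambda a)\le\lambda\,\Phi_1^{-1}(a)$ for every $a>0$ and $\lambda\ge 1$. Feeding this into the monotonicity of $\Phi_2$ and the upper type $q$ bound (\ref{uppertype}) for $\Phi_2$ yields, for $a>0$ and $\lambda\ge 1$,
\[
g(\lambda a)=\Phi_2\!\left(\Phi_1^{-1}(\lambda a)\right)\le\Phi_2\!\left(\lambda\,\Phi_1^{-1}(a)\right)\le C\lambda^q\,\Phi_2\!\left(\Phi_1^{-1}(a)\right)=C\lambda^q\,g(a),
\]
so $g$ has upper type $q$. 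Next I would verify that $t\mapsto g(t)/t$ is nondecreasing: writing $u=\Phi_1^{-1}(s)$, so that $\Phi_1(u)=s$, one has $g(s)/s=\Phi_2(u)/\Phi_1(u)$, and since $u=\Phi_1^{-1}(s)$ increases with $s$ while $\Phi_2/\Phi_1$ is nondecreasing by hypothesis, $g(s)/s$ is nondecreasing. Together with the previous estimate this gives $g\in\mathscr U^q$; in particular $g(s)>0$ for all $s>0$, while $g(s)\to 0$ as $s\to 0^+$ and $g(s)\to\infty$ as $s\to\infty$.

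It then remains to transfer these properties to $\Phi_3(t)=1/g(1/t)$. Because $t\mapsto 1/t$ is decreasing and $g$ is nondecreasing and positive on $(0,\infty)$, the map $t\mapsto g(1/t)$ is positive and nonincreasing, so $\Phi_3$ is continuous and nondecreasing on $(0,\infty)$; the limits of $g$ recalled above give $\Phi_3(t)\to 0$ as $t\to 0^+$ and $\Phi_3(t)\to\infty$ as $t\to\infty$, so $\Phi_3$ extends to a growth function with $\Phi_3(0)=0$. For the upper type, the change of variable $b=1/(\lambda t)$ (so that $1/t=\lambda b$) turns the desired inequality $\Phi_3(\lambda t)\le C\lambda^q\Phi_3(t)$ into exactly $g(\lambda b)\le C\lambda^q g(b)$, which holds by the upper type $q$ of $g$. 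Finally $\Phi_3(t)/t=1/\bigl(t\,g(1/t)\bigr)$ and $t\,g(1/t)=g(s)/s$ with $s=1/t$, so $t\mapsto\Phi_3(t)/t$ is nondecreasing precisely because $s\mapsto g(s)/s$ is nondecreasing while $s=1/t$ is decreasing. Hence $\Phi_3\in\mathscr U^q\subseteq\mathscr U$.

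The only place demanding genuine care is the upper type estimate for $g$, which rests on the sublinearity $\Phi_1^{-1}(\lambda a)\le\lambda\Phi_1^{-1}(a)$ extracted from convexity of $\Phi_1$; all the remaining steps are bookkeeping with inverses and the change of variable $s\leftrightarrow 1/t$, the only mild nuisance being the choice of convention for $\Phi_1^{-1}$ when $\Phi_1$ is not strictly increasing, which does not affect any of the inequalities above.
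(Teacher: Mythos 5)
Your proof is correct and follows essentially the same route as the paper's: the sublinearity $\Phi_1^{-1}(\lambda a)\le \lambda\,\Phi_1^{-1}(a)$ extracted from convexity of $\Phi_1$ gives the upper type $q$ of $g=\Phi_2\circ\Phi_1^{-1}$, and the reflection $t\mapsto 1/g(1/t)$ transfers it to $\Phi_3$ by the same change of variable. The only difference is that you explicitly verify, using the hypothesis that $\Phi_2/\Phi_1$ is nondecreasing, that $g(s)/s$ and hence $\Phi_3(t)/t$ are nondecreasing, a point the paper merely asserts in its closing sentence.
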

\begin{proof}
Note that as $\frac{\Phi_1^{-1}(t)}{t}$ is nonincreasing, we have that for any $s\ge 1$ and $t>0$, $$\Phi_1^{-1}(st)\le s\Phi_1^{-1}(t)$$ and so 
\Beas\Phi_2\circ\Phi_1^{-1}(st) &\le& \Phi_2\left(s\Phi_1^{-1}(t)\right)\\ &\le& Cs^q\Phi_2\circ\Phi_1^{-1}(t).\Eeas
That is $\Phi_2\circ\Phi_1^{-1}\in \mathscr{U}^q$.
Hence $$\frac{\Phi_2\circ\Phi_1^{-1}\left(\frac 1{st}\right)}{\left(\frac 1{st}\right)^q}\ge \frac{\Phi_2\circ\Phi_1^{-1}\left(\frac 1{t}\right)}{\left(\frac 1{t}\right)^q},$$
or equivalently, $$\Phi_2\circ\Phi_1^{-1}\left(\frac 1{st}\right)\ge \frac{\Phi_2\circ\Phi_1^{-1}\left(\frac 1{t}\right)}{s^q}.$$
That is for any $s\ge 1$ and $t>0$, $\Phi_3(st)\le s^q\Phi_3(t)$. Lemma follows easily as $\Phi_3$ and the function $t\mapsto \frac{\Phi_3(t)}{t}$ are increasing.
\end{proof}
\subsection{Integrability results for some positive kernel functions}
We recall that the beta function is defined by $$B(m, n)= B(n,m) = \int_0^{\infty}\!\!\!\frac{u^{m-1}}{(1+u)^{m+n}}\mathrm{d}u\qquad\mbox{where}\quad m,n>0.$$
The two following results can be found for example in \cite{BanSeh}.
\begin{lem}\label{lem:betafunctionconditions}
Let $\alpha, \beta$ be a real numbers, and $t>0$ be fixed. Then the integral $$I(t)=\int_0^\infty \frac{y^\alpha}{(t+y)^\beta}\mathrm{d}x$$
converges if and only if $\alpha>-1$ and $\beta-\alpha>1$. In this case, $$I(y)=B(\alpha+1, \beta-\alpha-1)t^{-\beta+\alpha+1}.$$
\end{lem}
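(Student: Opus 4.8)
The plan is to exploit the homogeneity of the integrand and reduce $I(t)$ to a single scale-free integral. First I would perform the change of variables $y=tu$, $dy=t\,du$, which gives
\[
I(t)=\int_0^\infty\frac{(tu)^\alpha}{(t+tu)^\beta}\,t\,du=t^{\alpha-\beta+1}\int_0^\infty\frac{u^\alpha}{(1+u)^\beta}\,du,
\]
so that the entire $t$-dependence is pulled out as the prefactor $t^{\alpha-\beta+1}$ and everything reduces to the convergence and evaluation of $J:=\int_0^\infty u^\alpha(1+u)^{-\beta}\,du$.

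Next I would analyze $J$ by splitting the domain as $(0,\infty)=(0,1)\cup(1,\infty)$. On $(0,1)$ the factor $(1+u)^{-\beta}$ is bounded above and below by positive constants, so $\int_0^1 u^\alpha(1+u)^{-\beta}\,du\thickapprox\int_0^1 u^\alpha\,du$, which is finite precisely when $\alpha>-1$. On $(1,\infty)$ we have $(1+u)^{-\beta}\thickapprox u^{-\beta}$, hence $\int_1^\infty u^\alpha(1+u)^{-\beta}\,du\thickapprox\int_1^\infty u^{\alpha-\beta}\,du$, which is finite precisely when $\alpha-\beta<-1$, i.e. $\beta-\alpha>1$. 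Combining the two endpoints shows $I(t)<\infty$ if and only if $\alpha>-1$ and $\beta-\alpha>1$.

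Finally, assuming both conditions hold, I would identify $J$ with the Beta integral in the definition of $B(m,n)$ recalled just above the lemma: choosing $m-1=\alpha$ and $m+n=\beta$, that is $m=\alpha+1$ and $n=\beta-\alpha-1$, one has $J=B(\alpha+1,\beta-\alpha-1)$, and therefore
\[
I(t)=B(\alpha+1,\beta-\alpha-1)\,t^{\alpha-\beta+1}.
\]
There is no serious obstacle here; the only points needing a little care are keeping the exponent $\alpha-\beta+1$ correct through the substitution and noting that the parameters $\alpha+1$ and $\beta-\alpha-1$ are strictly positive under the convergence hypotheses, which is exactly what makes the Beta function well defined in the last step.
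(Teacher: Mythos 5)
Your proof is correct: the substitution $y=tu$ correctly extracts the factor $t^{\alpha-\beta+1}$, the endpoint analysis at $0$ and $\infty$ gives exactly the conditions $\alpha>-1$ and $\beta-\alpha>1$, and the identification with $B(\alpha+1,\beta-\alpha-1)$ matches the definition of the Beta function recalled in the paper. The paper itself offers no proof of this lemma (it is quoted from the reference \cite{BanSeh}), and your argument is the standard one that reference uses, so there is nothing further to reconcile.
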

\vskip .1cm
%We will need the following integrability conditions of the kernel function.
\begin{lem}\label{lem:integkernel} Let $\alpha$ be real. Then for $y>0$ fixed, the integral
$$J_{\alpha}(y)=\int_{\mathbb{R}}\frac{dx}{|x+iy|^\alpha}
$$ converges if and only if $\alpha > 1.$ In this case,
$$J_{\alpha}(y)=B(\frac{1}{2}, \frac{\alpha-1}{2})y^{1-\alpha}.$$
\end{lem}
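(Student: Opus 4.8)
The plan is to reduce the integral $J_\alpha(y)=\int_{\mathbb{R}}|x+iy|^{-\alpha}\,dx$ to the beta-function integral of Lemma \ref{lem:betafunctionconditions}. First I would write $|x+iy|^2 = x^2+y^2$, so that
\[
J_\alpha(y)=\int_{\mathbb{R}}\frac{dx}{(x^2+y^2)^{\alpha/2}}
=2\int_0^\infty\frac{dx}{(x^2+y^2)^{\alpha/2}},
\]
using that the integrand is even in $x$. The only possible issues for convergence are at $x\to\infty$ (where the integrand behaves like $x^{-\alpha}$, integrable iff $\alpha>1$) and, when $\alpha<0$, growth at infinity; there is no singularity on the interior since $y>0$. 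So the convergence claim ``$\alpha>1$'' follows at once from the behaviour at infinity.

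Next I would evaluate the integral by the substitution $x^2 = y^2 u$, i.e. $x=y\sqrt{u}$, $dx=\tfrac{y}{2}u^{-1/2}\,du$, which transforms the half-line integral into
\[
2\int_0^\infty\frac{1}{(y^2 u+y^2)^{\alpha/2}}\cdot\frac{y}{2}u^{-1/2}\,du
= y^{1-\alpha}\int_0^\infty\frac{u^{-1/2}}{(1+u)^{\alpha/2}}\,du.
\]
Now I recognize the last integral as $B(m,n)$ with $m-1=-\tfrac12$ and $m+n=\tfrac{\alpha}{2}$, that is $m=\tfrac12$ and $n=\tfrac{\alpha-1}{2}$; this is finite precisely when $m>0$ and $n>0$, i.e.\ $\alpha>1$, consistent with the convergence statement. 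Hence $J_\alpha(y)=B\!\left(\tfrac12,\tfrac{\alpha-1}{2}\right)y^{1-\alpha}$, as claimed. Alternatively one can cite Lemma \ref{lem:betafunctionconditions} directly with the substitution $x^2=w$ on $(0,\infty)$ to the same effect.

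There is essentially no hard step here; the only point requiring a little care is bookkeeping the parameters of the beta function and checking that the constraints $m>0$, $n>0$ match the stated convergence range $\alpha>1$, together with noting that for $y>0$ the integrand is smooth and bounded on compact sets so that the sole obstruction to convergence is the decay at $\pm\infty$. Once the substitution is chosen correctly, the identity drops out immediately from the definition of $B(m,n)$.
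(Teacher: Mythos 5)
Your proof is correct. Note that the paper does not actually prove this lemma: it simply refers the reader to \cite{BanSeh} for both Lemma \ref{lem:betafunctionconditions} and Lemma \ref{lem:integkernel}, so there is no internal argument to compare against. What you supply is the standard self-contained derivation: evenness reduces to $2\int_0^\infty (x^2+y^2)^{-\alpha/2}\,dx$, the substitution $x^2=y^2u$ produces $y^{1-\alpha}\int_0^\infty u^{-1/2}(1+u)^{-\alpha/2}\,du$, and matching parameters in the definition of $B(m,n)$ gives $m=\tfrac12$, $n=\tfrac{\alpha-1}{2}$, hence the value $B\left(\tfrac12,\tfrac{\alpha-1}{2}\right)y^{1-\alpha}$ and the convergence range $\alpha>1$ (the divergence for $\alpha\le 1$ being clear from the non-integrable tail $|x|^{-\alpha}$, and for $\alpha\le 0$ from the lack of decay). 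This is exactly the computation one would extract from the cited reference, and it is also consistent with invoking Lemma \ref{lem:betafunctionconditions} after the substitution, as you remark; the bookkeeping of the beta-function parameters is the only delicate point and you have it right.
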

\subsection{Hardy-Orlicz spaces of the upper-half plane}
For $\Phi\in \mathscr{U}^q$ and $f\in H^\Phi(\mathbb{C}_+)$, we define $$\|f\|_{H^\Phi}:=\sup_{y>0}\int_{\mathbb{R}}\Phi\left(|f(x+iy)|\right)dx.$$
One can check that $f\in H^\Phi(\mathbb{C}_+)$ if and only if $\|f\|_{H^\Phi}<\infty$. Indeed, we have that the following relations hold:
$$\|f\|_{L^\Phi}\lesssim \max\{\|f\|_{L^\Phi}^{lux},\left(\|f\|_{L^\Phi}^{lux}\right)^q\}$$
and $$\|f\|_{L^\Phi}^{lux}\lesssim \max\{\|f\|_{L^\Phi},\left(\|f\|_{L^\Phi}\right)^{1/q}\}.$$
%{\bf Sehba: Quelqu'un doit verifier ces deux inegalites.}
\vskip .1cm
Also $\|\cdot\|_{H^\Phi}^{lux}$ defines a norm on $H^\Phi(\mathbb{C}_+)$ and $(H^\Phi(\mathbb{C}_+),\|\cdot\|_{H^\Phi}^{lux})$ is a Banach space. 
\vskip .2cm
Let us observe the following.
\begin{lem}
Let $\Phi$ a convex growth function. Then $\|f\|_{H^\Phi}=0$ if and only if $f=0$.
\end{lem}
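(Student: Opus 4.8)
The plan is to prove the two implications separately. The direction $f=0\Rightarrow\|f\|_{H^\Phi}=0$ is immediate: if $f\equiv0$ then $\Phi(|f(x+iy)|)=\Phi(0)=0$ for every $x\in\mathbb{R}$ and $y>0$, so $\int_{\mathbb{R}}\Phi(|f(x+iy)|)\,dx=0$ for all $y>0$, and taking the supremum over $y>0$ gives $\|f\|_{H^\Phi}=0$.

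For the converse, assume $\|f\|_{H^\Phi}=0$. Since each integral $\int_{\mathbb{R}}\Phi(|f(x+iy)|)\,dx$ is nonnegative and their supremum over $y>0$ vanishes, we get $\int_{\mathbb{R}}\Phi(|f(x+iy)|)\,dx=0$ for every fixed $y>0$. Fix one such $y_0>0$. The function $x\mapsto\Phi(|f(x+iy_0)|)$ is nonnegative and continuous (as $f$ is holomorphic, hence continuous, on $\mathbb{C}_+$ and $\Phi$ is continuous), and a nonnegative continuous function on $\mathbb{R}$ with vanishing integral must be identically zero; hence $\Phi(|f(x+iy_0)|)=0$ for every $x\in\mathbb{R}$. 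At this point I would invoke the positivity of growth functions away from the origin, namely $\Phi(s)=0$ if and only if $s=0$. This property is exactly what makes the statement correct (without it, a nonzero constant of small modulus would violate it), and it does hold here; for instance for any $\Phi\in\mathscr{U}^q$ the upper-type inequality $\Phi(s_1)\le C(s_1/s)^q\Phi(s)$ together with $\Phi(s_1)>0$ for some $s_1$ forces $\Phi(s)>0$ for all $s>0$. Consequently $f(x+iy_0)=0$ for every $x\in\mathbb{R}$, i.e. $f$ vanishes identically on the horizontal line $\{z\in\mathbb{C}_+:\Im z=y_0\}$.

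Finally, the zero set of $f$ then has accumulation points inside the connected open set $\mathbb{C}_+$, so the identity theorem for holomorphic functions yields $f\equiv0$ on $\mathbb{C}_+$. The argument is short and I expect no serious difficulty; the only point meriting care is the positivity property $\Phi(s)=0\Leftrightarrow s=0$ (the place where the precise notion of growth function matters), since the passage "vanishing integral of a nonnegative function $\Rightarrow$ integrand zero" and the identity principle are entirely standard once that is in hand.
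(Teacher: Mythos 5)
Your forward direction, the identity-theorem step, and the reduction to $\Phi(|f(x+iy_0)|)=0$ for a.e.\ (indeed every) $x$ are fine, but the pivotal step --- ``$\Phi(s)=0$ if and only if $s=0$'' --- is a genuine gap relative to the stated hypotheses. The lemma assumes only that $\Phi$ is a convex growth function, i.e.\ continuous, nondecreasing, onto $[0,\infty)$ and convex; the function $\Phi(t)=\max(0,t-1)$ satisfies all of this and vanishes on the whole interval $[0,1]$. Your justification of positivity invokes the upper-type inequality, i.e.\ $\Phi\in\mathscr{U}^q$, which is not among the hypotheses of this lemma, so as written your argument proves the statement only under an extra assumption. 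In fact your own parenthetical remark (a nonzero constant of small modulus) pinpoints the problem: for such a degenerate $\Phi$, the statement read literally with the modular quantity $\|f\|_{H^\Phi}=\sup_{y>0}\int_{\mathbb{R}}\Phi(|f(x+iy)|)\,dx$ fails (take $f\equiv 1/2$), so no argument along your pointwise lines can close the gap without either strengthening the hypothesis or reinterpreting $\|f\|_{H^\Phi}$.

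This is exactly where the paper's proof takes a different mechanism, which needs no positivity of $\Phi$ away from the origin. It reads $\|f\|_{H^\Phi}=0$ in a Luxembourg-type way: for each fixed $y>0$ there is $\delta_0>0$ such that $\int_{\mathbb{R}}\Phi\bigl(|f(x+iy)|/\delta\bigr)\,dx\le 1$ for all $0<\delta<\delta_0$. Then, for any finite interval $I$ and any $C>1$, Jensen's inequality (this is where convexity is actually used) applied to the average over $I$ gives $\Phi\Bigl(\frac{C}{\delta_0}\frac{1}{|I|}\int_I|f(x+iy)|\,dx\Bigr)\le \frac{1}{|I|}$, hence $\frac{1}{\delta_0}\int_I|f(x+iy)|\,dx\le \frac{|I|}{C}\Phi^{-1}\bigl(\frac{1}{|I|}\bigr)$; letting $C\to\infty$ forces $\int_I|f(x+iy)|\,dx=0$ for every $I$, so $f$ vanishes on each horizontal line and hence identically. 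To repair your proposal, either add (and use) the hypothesis that $\Phi$ is positive on $(0,\infty)$ --- e.g.\ $\Phi\in\mathscr{U}$, as in your $\mathscr{U}^q$ argument --- or replace the pointwise positivity step by the Jensen-plus-arbitrary-constant normalization above.
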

\begin{proof}
Assume that $\|f\|_{H^\Phi}=0$. Then for any $y>0$ fixed, there exists $\delta_0=\delta_0(y)>0$ such that  for any $0<\delta<\delta_0$,
$$\int_{\mathbb{R}}\Phi\left(\frac{|f(x+iy)|}{\delta}\right)dx\le 1.$$
This implies that for any interval $I\subset \mathbb{R}$,
$$\int_{I}\Phi\left(\frac{|f(x+iy)|}{\delta}\right)\frac{dx}{|I|}\le \frac{1}{|I|}.$$
We obtain in particular that for any $C>1$,
$$\Phi\left(\int_{I}\frac{C|f(x+iy)|}{\delta_0}\frac{dx}{|I|}\right)\le \int_{I}\Phi\left(\frac{|f(x+iy)|}{(\delta_0/C)}\right)\frac{dx}{|I|}\le \frac{1}{|I|}.$$
Thus 
$$\int_{I}\frac{|f(x+iy)|}{\delta_0}dx\le \frac{|I|}{C}\Phi^{-1}\left(\frac{1}{|I|}\right).$$
Letting $C\rightarrow \infty$, we obtain that for any interval $I\subset \mathbb{R}$,
$$\int_{I}|f(x+iy)|dx=0.$$
Hence the Monotone Convergence Theorem then gives that
$$\int_{\mathbb{R}}|f(x+iy)|dx=0.$$
Thus $f=0$. The proof is complete.

\end{proof}
We recall that the Hardy-Littlewood maximal function of $\mathbb{R}$ is the function defined for any locally integrable function $f$ by
\Be\label{eq:defmaxfnct}Mf(x):=\sup_{I\subset \mathbb{R}}\frac{\chi_I(x)}{|I|}\int_I|f(s)|ds\Ee
where the supremum is taken over all intervals of $\mathbb{R}$.
% and for any measurable set $E\subset \mathbb{R}$, $\chi_E$ and $|E|$ denote respectively the indicator function of $E$ and the Lebesgue measure of $E$.
\vskip .1cm
Let us consider the following system of dyadic grids,
$$\mathcal D^\beta:=\{2^j\left([0,1)+m+(-1)^j\beta\right):m\in \mathbb Z,\,\,\,j\in \mathbb Z \},\,\,\,\textrm{for}\,\,\,\beta\in \{0,1/3\}.$$
%For more on this system of dyadic grids and its applications, we refer to \cite{AlPottReg, HyPerez,Lerner,LerOmbroPerezetal,PR}. 
When $\beta=0$, we observe that $\mathcal D^0$ is the standard dyadic grid of $\mathbb R$, denoted $\mathcal D$.
\vskip .2cm
For any $\beta\in \{0,1/3\}$, we denote by ${M}^{d,\beta}$ the dyadic analogue of the Hardy-Littlewood maximal function, defined as in (\ref{eq:defmaxfnct}) but with the supremum taken over dyadic intervals in the grid $\mathcal D^\beta$.
\vskip .2cm
It is a classical fact that for any locally integrable function $f$ on $\mathbb{R}$, \Be\label{eq:strongdyamax}Mf(x)\le 6\sum_{\beta\in \{0,\frac 13\}}{M}^{d,\beta}f(x).\Ee
The following is a well known result (see for example \cite{koki}). We provide a proof here for the sake of the reader.
\begin{prop}\label{prop:boundedHLmax}
Let $\Phi$ be a $\mathcal{C}^1$ convex growth function that satisfies the $\nabla_2$-condition. Then there exists a constant $C=C_\Phi>0$ such that for any $f\in L^\Phi(\mathbb{R})$,
\Be\label{eq:HLineq}
\int_{\mathbb{R}}\Phi(Mf(x))dx\le C\int_{\mathbb{R}}\Phi(|f(x)|)dx.
\Ee
\end{prop}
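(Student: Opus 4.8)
The plan is to reduce to the dyadic maximal operators and then run the classical distribution-function argument, whose engine is the Dini-type estimate of Lemma~\ref{lem:dini}. Recall that a $\mathcal C^1$ convex growth function satisfying the $\nabla_2$-condition belongs to $\mathscr U$, so $\Phi$ obeys the $\Delta_2$-condition and $\Phi'(t)\backsimeq\Phi(t)/t$; both facts will be used freely. First, by \eqref{eq:strongdyamax} we have $Mf(x)\le 6\sum_{\beta\in\{0,1/3\}}M^{d,\beta}f(x)\le 12\max_{\beta}M^{d,\beta}f(x)$, and since $12\le 2^4$, iterating the $\Delta_2$-condition a bounded number of times gives
\[ \Phi\big(Mf(x)\big)\ \lesssim\ \max_{\beta\in\{0,1/3\}}\Phi\big(M^{d,\beta}f(x)\big)\ \le\ \sum_{\beta\in\{0,1/3\}}\Phi\big(M^{d,\beta}f(x)\big). \]
Integrating, it suffices to prove \eqref{eq:HLineq} with $M$ replaced by $M^{d,\beta}$ for each fixed $\beta\in\{0,1/3\}$; write $M^d=M^{d,\beta}$ and $\mathcal D=\mathcal D^\beta$ from now on.

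Next I would record the refined weak-type $(1,1)$ bound: for every $\lambda>0$,
\[ \left|\{x\in\mathbb R:\ M^d f(x)>\lambda\}\right|\ \le\ \frac{2}{\lambda}\int_{\{|f|>\lambda/2\}}|f(x)|\,dx. \]
This is the usual splitting $f=f\mathbf 1_{\{|f|>\lambda/2\}}+f\mathbf 1_{\{|f|\le\lambda/2\}}$: the dyadic maximal function of the second summand is $\le\lambda/2$ everywhere, hence $\{M^d f>\lambda\}\subset\{M^d(f\mathbf 1_{\{|f|>\lambda/2\}})>\lambda/2\}$, and the elementary dyadic weak-type $(1,1)$ inequality applied to $f\mathbf 1_{\{|f|>\lambda/2\}}$ at level $\lambda/2$ gives the claim (the maximal intervals of $\mathcal D$ on which its average exceeds $\lambda/2$ are pairwise disjoint and cover that set; note that for $f\in L^\Phi(\mathbb R)$ the $\Delta_2$-condition makes $\int_{\mathbb R}\Phi(|f|)\,dx$, hence every quantity below, finite).

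Now I would pass to the Orlicz integral via the layer-cake identity. Since $\Phi$ is $\mathcal C^1$, nondecreasing and $\Phi(0)=0$, Tonelli's theorem gives $\int_{\mathbb R}\Phi(M^d f)\,dx=\int_0^\infty\Phi'(\lambda)\,|\{M^d f>\lambda\}|\,d\lambda$. Inserting the bound above and swapping the order of integration (noting that $|f(x)|>\lambda/2$ is equivalent to $\lambda<2|f(x)|$) yields
\[ \int_{\mathbb R}\Phi\big(M^d f(x)\big)\,dx\ \le\ 2\int_{\mathbb R}|f(x)|\left(\int_0^{2|f(x)|}\frac{\Phi'(\lambda)}{\lambda}\,d\lambda\right)dx. \]
Using $\Phi'(\lambda)\backsimeq\Phi(\lambda)/\lambda$, the inner integral is comparable to $\int_0^{2|f(x)|}\Phi(\lambda)\lambda^{-2}\,d\lambda$, which by Lemma~\ref{lem:dini}(b) is $\lesssim\Phi(2|f(x)|)/(2|f(x)|)$ and hence, after one further use of the $\Delta_2$-condition, $\lesssim\Phi(|f(x)|)/|f(x)|$. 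Therefore $|f(x)|\int_0^{2|f(x)|}\Phi'(\lambda)\lambda^{-1}\,d\lambda\lesssim\Phi(|f(x)|)$ pointwise, and integrating in $x$ and summing over $\beta\in\{0,1/3\}$ completes the proof.

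The only genuinely delicate point is the last step, where the $\nabla_2$-hypothesis becomes indispensable through the Dini integrability $\int_0^t\Phi(s)s^{-2}\,ds\lesssim\Phi(t)/t$ of Lemma~\ref{lem:dini}(b): this estimate fails for $\Phi(t)=t$, in accordance with the well-known unboundedness of $M$ on $L^1(\mathbb R)$. Everything else --- the reduction to the two dyadic grids, the refined weak-type bound, and the layer-cake formula --- is routine.
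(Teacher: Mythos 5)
Your proof is correct and follows essentially the same route as the paper: reduction to the dyadic maximal operators $M^{d,\beta}$ via \eqref{eq:strongdyamax}, the refined weak-type $(1,1)$ bound, the layer-cake formula, and Lemma~\ref{lem:dini}(b) to control $\int_0^{2|f|}\Phi'(\lambda)\lambda^{-1}\,d\lambda$ by $\Phi(|f|)/|f|$. The only cosmetic difference is that you invoke $\Phi'(\lambda)\backsimeq\Phi(\lambda)/\lambda$ where the paper uses an integration by parts, and you spell out the dyadic weak-type estimate and the $\Delta_2$ step in the reduction that the paper leaves implicit.
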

\begin{proof}
From the inequality (\ref{eq:strongdyamax}), it is enough to prove (\ref{eq:HLineq}) for the maximal function ${M}^{d,\beta}$, $\beta=0,\frac 13$. From standard properties of dyadic intervals, one obtain that 
 \Be\label{eq:weakmaxest} 	| \{ x\in {\mathbb{R}} :   M^{d,\beta}f(x) > \lambda  \} | \leq \frac{2}{\lambda} \int_{\{ t\in {\mathbb{R}} :  \mid f(t)  \mid > \frac{\lambda}{2}  \}}\mid f(t) \mid dt.\Ee
Hence
$$\begin{array}{rcl}
\int_{{\mathbb{R}}}\Phi\left( M^{d,\beta}f(x) \right) dx &=&  \int_{0}^{\infty}\Phi^{\prime}(\lambda) | \{ x\in {\mathbb{R}} :   M^{d,\beta}f(x) > \lambda  \} | d\lambda \\\\	
&\leq& \int_{0}^{\infty}\Phi^{\prime}(\lambda) \left(\frac{2}{\lambda} \int_{\{ x\in {\mathbb{R}} :  \mid f(x)  \mid > \frac{\lambda}{2}  \}}\mid f(x) \mid dx  \right)d\lambda \\\\	
&\leq& 2\int_{{\mathbb{R}}} \mid f(x) \mid \left(\int_{0}^{2\mid f(x) \mid} \frac{\Phi^{\prime}(\lambda)}{\lambda}d\lambda\right)	dx. \\
\end{array}$$
As $\Phi$ satisfies the $\nabla_2$-condition, we have from Lemma \ref{lem:dini} that there exists $ C>0$ such that $\forall~ t >0$ ,    \[   \int_{0}^{t}\frac{\Phi(\lambda)}{\lambda^{2}}d\lambda \leq C \frac{\Phi(t)}{t} .\]
It follows from an integration by parts that  \[  \int_{0}^{t}\frac{\Phi^{\prime}(\lambda)}{\lambda}d\lambda \leq \frac{\Phi(t)}{t} + \int_{0}^{t}\frac{\Phi(\lambda)}{\lambda^{2}}d\lambda \leq C_{1}\frac{\Phi(t)}{t}. \] 
Thus 
\[    2\int_{{\mathbb{R}}} \mid f(x) \mid \left(\int_{0}^{2\mid f(x) \mid} \frac{\Phi^{\prime}(\lambda)}{\lambda}d\lambda\right)	dx  \leq C \int_{\pmb{\mathbb{R}}} \Phi(\mid f(x) \mid)dx \]
and consequently, \[ \int_{{\mathbb{R}}}\Phi( M^{d,\beta}f(x) ) dx \leq C_{1} \int_{{\mathbb{R}}} \Phi(\mid f(x) \mid)dx.            \]
 
\end{proof}
The nontangential maximal function $f^*$ of a function $f$ defined on $\mathbb{C}_+$ is given by 
\Be\label{eq:deefnintanmaxfunct}
f^*(x):=\sup_{z\in \Gamma(x)}|f(z)|
\Ee 
where $\Gamma(x):=\{z=t+iy\in \mathbb{C}_+:\,|t-x|<y\}$.
\vskip .1cm
As for classical Hardy spaces of the upper-half plane, we have the following characterization of Hardy-Orlicz spaces.
\begin{thm}\label{thm:nontangequivdef}
Let $\Phi$ be a $\mathcal{C}^1$ convex growth function that satisfies the $\nabla_2$-condition. Then $f\in H^\Phi(\mathbb{C}_+)$ if and only if $f^*\in L^\Phi(\mathbb{R})$. Moreover,
$$\|f\|_{H^\Phi}^{lux}\approx \|f^*\|_{L^\Phi}^{lux}.$$
\end{thm}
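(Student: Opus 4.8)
The plan is to follow the classical Hardy-space route, transferring the work to the real line via boundary values and Poisson integrals, and then invoking Proposition \ref{prop:boundedHLmax} to control the nontangential maximal function by the Hardy--Littlewood maximal function. First I would establish the easy direction: if $f^*\in L^\Phi(\mathbb R)$, then for each fixed $y>0$ one has $|f(x+iy)|\le f^*(x)$ pointwise (since $x+iy\in\Gamma(x)$), so $\int_{\mathbb R}\Phi(|f(x+iy)|/\lambda)\,dx\le\int_{\mathbb R}\Phi(f^*(x)/\lambda)\,dx$ for every $\lambda>0$; taking the supremum over $y>0$ gives $\|f\|_{H^\Phi}^{lux}\le\|f^*\|_{L^\Phi}^{lux}$, which is the trivial half of the norm equivalence.

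For the substantive direction, suppose $f\in H^\Phi(\mathbb C_+)$. Since $\Phi\in\mathscr U$ (recall that a $\mathcal C^1$ convex growth function satisfying $\nabla_2$ lies in $\mathscr U$, as noted in the excerpt) and $t\mapsto\Phi(t)/t$ is nondecreasing, the space $H^\Phi(\mathbb C_+)$ is contained in $H^1(\mathbb C_+)$ locally enough that $f$ has a boundary function $f(x)=\lim_{y\to0}f(x+iy)$, and $f$ is recovered from $f(\cdot)$ by the Poisson integral $f(x+iy)=(P_y\ast f)(x)$, where $P_y$ is the Poisson kernel for $\mathbb C_+$. The key classical pointwise estimate is that the nontangential maximal function is dominated by the Hardy--Littlewood maximal function of the boundary values: $f^*(x)\lesssim M(f(\cdot))(x)$ for a.e.\ $x\in\mathbb R$. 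I would then apply Proposition \ref{prop:boundedHLmax} to obtain, for a suitable constant $C_\Phi$ and any $\lambda>0$,
\[
\int_{\mathbb R}\Phi\!\left(\frac{f^*(x)}{C\lambda}\right)dx\lesssim\int_{\mathbb R}\Phi\!\left(\frac{M(f(\cdot))(x)}{\lambda}\right)dx\lesssim\int_{\mathbb R}\Phi\!\left(\frac{|f(x)|}{\lambda}\right)dx,
\]
where in the last step one uses $\Phi(t/C)\le\Phi(t)$ together with the $\Delta_2$-property to absorb constants; taking $\lambda=\|f\|_{H^\Phi}^{lux}$ and using $\int_{\mathbb R}\Phi(|f(x)|/\|f\|_{H^\Phi}^{lux})\,dx\le1$ (by Fatou applied to the defining supremum over $y>0$) yields $f^*\in L^\Phi(\mathbb R)$ with $\|f^*\|_{L^\Phi}^{lux}\lesssim\|f\|_{H^\Phi}^{lux}$.

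The main obstacle is justifying the existence of boundary values and the Poisson representation $f=P_y\ast f(\cdot)$ for $f$ merely in $H^\Phi$, together with the a.e.\ domination $f^*\lesssim M(f(\cdot))$; in the classical $H^p$ theory this rests on the subharmonicity of $|f|^p$ for an appropriate exponent and the resulting uniform local integrability. Here I would exploit that $\Phi$ is convex with $\Phi(t)/t$ nondecreasing, so $\Phi(|f|)$ is subharmonic, giving uniform $L^1_{loc}$ bounds on $\Phi(|f(\cdot+iy)|)$ and hence (via the growth hypotheses relating $\Phi$ to power functions through the indices $a_\Phi\le b_\Phi$) the needed normal-family and weak-$\ast$ compactness arguments to produce the boundary function and the Poisson identity. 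Once that structural input is in place, the maximal-function domination is the standard one and the estimate above closes the argument; combining both directions gives $\|f\|_{H^\Phi}^{lux}\approx\|f^*\|_{L^\Phi}^{lux}$.
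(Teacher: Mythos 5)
Your proposal follows essentially the same route as the paper: the easy inequality $\|f\|_{H^\Phi}^{lux}\le\|f^*\|_{L^\Phi}^{lux}$ from $|f(x+iy)|\le f^*(x)$, and the hard direction via the Poisson representation of $f$ by a boundary function $g\in L^\Phi(\mathbb R)$, the classical bound $f^*\lesssim Mg$, and Proposition \ref{prop:boundedHLmax}. The only difference is that the structural step you flag as the main obstacle (existence of boundary values and the Poisson identity for $f\in H^\Phi$) is not proved in the paper either; it is simply quoted from Szajkowski's work on modular spaces of analytic functions in the half-plane, so your subharmonicity sketch would just be replacing that citation.
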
 
\begin{proof}
Let assume that $f \in {H}^{\Phi}({\mathbb{C_{+}}})$. Then as
$\Phi\in \mathcal{U}$ and satisfies $\nabla_2$-condition, we obtain as in the case of classical Hardy spaces (see \cite{Jan}) that there exists a unique function $g \in {L}^{\Phi}({\mathbb{R}})$ such that \[ \forall~ z=x+iy \in {\mathbb{C_{+}}},~~ f(z)= \int_{{\mathbb{R}}}P_{y}(t)g(x-t)dt  \] 
where $P_{y}(x)=\frac{1}{\pi}\frac{x}{x^2+y^2}$ is the Poisson kernel.
Moreover, $\|f\|_{{H}^{\Phi}}^{lux}=\| g\|_{{L}^{\Phi}}^{lux}$.
\vskip .1cm
From \cite[Theorem 4.2]{garnett}, we know that \[     f^{\star}(t_{0}) \leq C Mg(t_{0}),~ \forall~ t_{0}\in {\mathbb{R}}. \]
Hence as $\Phi\in \mathcal{U}$ and satisfies the $\nabla_2$-condition, and $g \in {L}^{\Phi}({\mathbb{R}})$, it follows from Proposition \ref{prop:boundedHLmax} that 	\[  \int_{{\mathbb{R}}}\Phi( Mg(x) ) dx  \leq C_{1} \int_{{\mathbb{R}}}\Phi( |g(x)| ) dx .\] Thus \[   \int_{{\mathbb{R}}}\Phi( f^{\star}(x) ) dx \leq C C_{1} \int_{{\mathbb{R}}}\Phi( |g(x)| ) dx. \]  One deduces that $f^{\star} \in {L}^{\Phi}{\mathbb{R}})$, and  
$\| f^{\star}\|_{{L}^{\Phi}}^{lux} \leq C_{2} \|f\|_{{H}^{\Phi}}^{lux}$ since $\|f\|_{{H}^{\Phi}}^{lux}=\| g\|_{{L}^{\Phi}}^{lux}$.
\vskip .2cm 
Now suppose that $f^{\star} \in {L}^{\Phi}({\mathbb{R}})$. Observe that
$\forall~ y > 0 ,~ \forall~ x \in {\mathbb{R}}$, $$|f(x+iy) | \leq f^{\star}(x),$$ since $x+iy \in \Gamma(x)$. Hence $\forall y > 0$ , 
\[      \int_{{\mathbb{R}}}\Phi( |f(x+iy)| ) dx	\leq \int_{{\mathbb{R}}}\Phi( f^{\star}(x) ) dx. \]
Thus \[     \sup_{y> 0}\int_{{\mathbb{R}}}\Phi( |f(x+iy)| ) dx	\leq \int_{{\mathbb{R}}}\Phi( f^{\star}(x) ) dx	\]
%As $f^{\star} \in \textbf{L}^{\Phi}({\mathbb{R}})$, we deduce that $f \in \textbf{H}^{\Phi}(\pmb{\mathbb{C_{+}}})$ and $
and consequently, $\|f\|_{{H}^{\Phi}}^{lux}\leq \| f^{\star}\|_{{L}^{\Phi}}^{lux}$.
\vskip .1cm
 We conclude that \[   \|f\|_{{H}^{\Phi}}^{lux}\approx \| f^{\star}\|_{{L}^{\Phi}}^{lux}.	\]	
 
\end{proof}
Let us finish this subsection by giving an example of elements in Hardy-Orlicz spaces.
\begin{lem}\label{lem:testfuncthardyo}
Let $\Phi$ be a convex growth function. Then for any $z=x+iy\in \mathbb{C}_+$, the function $$f_z(w):=\Phi^{-1}\left(\frac 1y\right)\frac{y^2}{(w-\bar{z})^2}$$
is in $H^\Phi(\mathbb{C}_+)$. Moreover,  $\|f\|_{H^\Phi}\le \pi.$
\end{lem}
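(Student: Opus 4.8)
The plan is to bound the modular $\int_{\mathbb{R}}\Phi(|f_z(u+iv)|)\,du$ at each height $v>0$ and then take the supremum over $v$. First I would record that $f_z$ is holomorphic on $\mathbb{C}_+$: writing $z=x+iy$ with $y>0$, for every $w\in\mathbb{C}_+$ we have $\Im(w-\bar z)=\Im w+y>0$, so $w-\bar z$ never vanishes on $\mathbb{C}_+$ and $f_z$ is a well-defined holomorphic function there.

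Next, fix $v>0$ and $u\in\mathbb{R}$ and set $w=u+iv$. Then $w-\bar z=(u-x)+i(v+y)$, so
$$|f_z(w)|=\Phi^{-1}\!\left(\frac1y\right)\cdot\frac{y^2}{(u-x)^2+(v+y)^2}=\lambda_u\,\Phi^{-1}\!\left(\frac1y\right),\qquad \lambda_u:=\frac{y^2}{(u-x)^2+(v+y)^2}.$$
Since $(v+y)^2\ge y^2$ we have $0\le\lambda_u\le1$. Because $\Phi$ is convex with $\Phi(0)=0$, for any $s\ge0$ and $0\le\lambda\le1$ one has $\Phi(\lambda s)=\Phi\big(\lambda s+(1-\lambda)\cdot0\big)\le\lambda\Phi(s)$; applying this with $\lambda=\lambda_u$ and $s=\Phi^{-1}(1/y)$ gives
$$\Phi(|f_z(w)|)\le\lambda_u\,\Phi\!\left(\Phi^{-1}\!\left(\frac1y\right)\right)=\frac{\lambda_u}{y}=\frac{y}{(u-x)^2+(v+y)^2}.$$

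Finally I would integrate in $u$ over $\mathbb{R}$. After the translation $t=u-x$, Lemma \ref{lem:integkernel} applied with exponent $2$ (so that $B(\tfrac12,\tfrac12)=\pi$) yields $\int_{\mathbb{R}}\frac{du}{(u-x)^2+(v+y)^2}=\frac{\pi}{v+y}$, hence
$$\int_{\mathbb{R}}\Phi(|f_z(u+iv)|)\,du\le y\cdot\frac{\pi}{v+y}=\frac{\pi y}{v+y}\le\pi,$$
since $v+y\ge y$. Taking the supremum over $v>0$ gives $\|f_z\|_{H^\Phi}\le\pi<\infty$, and to pass from finiteness of this modular quantity to membership in $H^\Phi(\mathbb{C}_+)$ one uses convexity once more: for $\lambda\ge\pi$, $\int_{\mathbb{R}}\Phi(|f_z(u+iv)|/\lambda)\,du\le\lambda^{-1}\int_{\mathbb{R}}\Phi(|f_z(u+iv)|)\,du\le\pi/\lambda\le1$, so $\|f_z(\cdot+iv)\|_{L^\Phi}^{lux}\le\pi$ uniformly in $v$, i.e. $f_z\in H^\Phi(\mathbb{C}_+)$. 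There is no serious obstacle in this argument; the single point on which it hinges is the convexity estimate $\Phi(\lambda s)\le\lambda\Phi(s)$ for $\lambda\le1$, which is exactly what makes the normalizing factor $\Phi^{-1}(1/y)$ cancel against the height $y$ and produce the clean, $\Phi$-independent bound $\pi$.
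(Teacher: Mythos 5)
Your proof is correct and follows essentially the same route as the paper: the key step in both is the convexity bound $\Phi(\lambda s)\le\lambda\Phi(s)$ for $0\le\lambda\le1$ (the paper phrases it via $t\mapsto\Phi(t)/t$ being increasing), which makes the factor $\Phi^{-1}(1/y)$ cancel, followed by the kernel integral of Lemma \ref{lem:integkernel} giving the bound $\pi y/(v+y)\le\pi$. Your explicit final passage from the modular bound to the Luxembourg-norm bound is a harmless addition the paper leaves implicit.
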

\begin{proof}
It is clear that $f_{z}$ is analytic on ${\mathbb{C_{+}}}$. We observe that
 $$|\omega-\overline{z}|^{2}=(u-x)^{2}+(y+v)^{2} > y^{2} \Longrightarrow \dfrac{y^{2}}{|z-\overline{\omega}|^{2}}< 1.$$
As the function $t\rightarrow \frac{\Phi(t)}{t}$ is increasing, we obtain using Lemma \ref{lem:integkernel} that
$\forall~ v > 0$,  
$$\begin{array}{rcl}
\int_{{\mathbb{R}}}\Phi(|f_{z}(u+iv)|)du &=& \int_{{\mathbb{R}}}\Phi\left(\Phi^{-1}\left(\dfrac{1}{y}\right) \dfrac{y^{2}}{ |(u-x)+i(y+v) |^{2}}\right) du \\\\
&\leq& \int_{{\mathbb{R}}}\dfrac{y^{2}}{ |(u-x)+i(y+v) |^{2}}\Phi\left(\Phi^{-1}\left(\dfrac{1}{y}\right)\right)du, \\\\
&=& \int_{{\mathbb{R}}}\dfrac{y}{ |(u-x)+i(y+v) |^{2}}du \\\\
&=& y B(\frac{1}{2}, \frac{1}{2} )\frac{1}{y+v} \\\\
&\leq& \pi. 
\end{array}$$
%where $B(\cdot,\cdot)$ is the usual beta function. %$=B(n,m) =\int_{0}^{\infty}\dfrac{u^{m-1}}{(1+u)^{m+n}}du , ~~\forall~ m, n > 0$. 
Thus 	
\[    \sup_{v > 0}\int_{{\mathbb{R}}}\Phi(|f_{z}(u+iv)|)du \leq \pi < \infty. \]
That is $f_{z} \in {H}^{\Phi}({\mathbb{C_{+}}})$ and $\|f_{z}\|_{{H}^{\Phi}}^{lux} \leq \pi$.	

\end{proof}
\subsection{Some useful facts on Bergman-Orlicz spaces of the upper-half plane}
We start by observing that as in the case of Hardy-Orlicz spaces, the following holds.
\begin{lem}
Let $\Phi$ be a convex growth function, and let $\alpha>-1$. Then $\|f\|_{A_\alpha^\Phi}=0$ if and only if $f=0$.
\end{lem}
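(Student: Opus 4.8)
The plan is to imitate the proof of the analogous statement for $H^\Phi(\mathbb C_+)$ proved just above, replacing $\mathbb R$ and $dx$ by $\mathbb C_+$ and $dV_\alpha$. The ``if'' direction is trivial: if $f\equiv 0$, then $\Phi(|f(z)|)=\Phi(0)=0$ for every $z\in\mathbb C_+$, so $\|f\|_{A_\alpha^\Phi}=\int_{\mathbb C_+}\Phi(|f|)\,dV_\alpha=0$.

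For the converse, the cleanest route is direct. From $\|f\|_{A_\alpha^\Phi}=\int_{\mathbb C_+}\Phi(|f(z)|)\,dV_\alpha(z)=0$ and $\Phi\ge 0$ one gets $\Phi(|f(z)|)=0$ for $dV_\alpha$-a.e. $z$, hence for a.e. $z$ with respect to planar Lebesgue measure, since $y^\alpha>0$ throughout $\mathbb C_+$. As $\Phi\circ|f|$ is continuous, the set $\{z\in\mathbb C_+:\Phi(|f(z)|)>0\}$ is open; a nonempty open subset of $\mathbb C_+$ has positive planar measure, so this set is empty, i.e. $\Phi(|f(z)|)=0$ for \emph{every} $z\in\mathbb C_+$. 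Since $\Phi$ is strictly positive on $(0,\infty)$, this forces $|f|\equiv 0$ on $\mathbb C_+$, hence $f\equiv 0$.

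If instead one prefers to stay parallel with the Hardy-Orlicz argument, one fixes a bounded rectangle $R\subset\mathbb C_+$: the hypothesis gives $\int_R\Phi(|f(z)|/\lambda)\,dV_\alpha(z)\le 1$ for all small $\lambda>0$, and Jensen's inequality for the convex function $\Phi$ with the probability measure $dV_\alpha/V_\alpha(R)$ on $R$ yields $\Phi\!\left(\frac{1}{V_\alpha(R)}\int_R\frac{|f(z)|}{\lambda}\,dV_\alpha(z)\right)\le\frac{1}{V_\alpha(R)}$, whence $\int_R|f|\,dV_\alpha\le\lambda\,V_\alpha(R)\,\Phi^{-1}\!\left(\frac{1}{V_\alpha(R)}\right)\to 0$ as $\lambda\to 0$; as $R$ is arbitrary and $dV_\alpha$ is comparable to Lebesgue measure on compact subsets of $\mathbb C_+$, it follows that $|f|=0$ a.e., so $f\equiv 0$ because the zero set of a nontrivial holomorphic function has measure zero. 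Either way there is no real obstacle; the only point worth keeping in mind is that the argument uses that $\Phi$ does not vanish on a nondegenerate interval $[0,t_0]$ (without this a small nonzero constant would violate the statement), and that passing from ``a.e.'' to ``everywhere'' relies on the continuity of $f$.
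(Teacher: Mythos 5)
Your proof is correct, and it does not follow the paper's route — in fact the paper never writes a proof of this Bergman--Orlicz lemma at all: it only remarks that it holds ``as in the case of Hardy-Orlicz spaces'', and that Hardy-Orlicz proof is precisely the Jensen-type argument you sketch as your second option (reduce to $\int\Phi(|f|/\delta)\le 1$, apply Jensen over an interval, let the auxiliary constant tend to infinity to get $\int_I|f|=0$). Your primary argument is genuinely different and simpler, and is actually better adapted to the Bergman setting, where the quantity $\|f\|_{A_\alpha^\Phi}$ is a single integral over $\mathbb{C}_+$: from $\int_{\mathbb{C}_+}\Phi(|f|)\,dV_\alpha=0$ you get $\Phi(|f(z)|)=0$ a.e., upgrade to everywhere by continuity, and conclude $f\equiv 0$ from positivity of $\Phi$ on $(0,\infty)$, with no need for Jensen, $\Phi^{-1}$, or an exhaustion by rectangles. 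Your closing caveat is well taken and applies equally to the paper's own Hardy-case argument: for a convex growth function vanishing on some $[0,t_0]$ (e.g. $\Phi(t)=\max(0,t-1)$) a small nonzero constant is holomorphic with vanishing modular, so strict positivity of $\Phi$ on $(0,\infty)$ — implicit throughout the paper via the use of $\Phi^{-1}$ and the classes $\mathscr U$, $\mathscr L$ — is genuinely needed for the statement. The only soft spot is in your alternative route: the opening claim that $\|f\|_{A_\alpha^\Phi}=0$ gives $\int_R\Phi(|f|/\lambda)\,dV_\alpha\le 1$ for all small $\lambda$ is not automatic for the modular (it is really the assertion that the Luxembourg norm vanishes); the paper's Hardy proof makes the same silent inference, and under the positivity assumption it is harmless since your direct argument already settles the lemma.
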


For any $\alpha>-1$, and any measurable set $E\subset \mathbb{C}_+$, we use the notation $$|E|_\alpha=V_\alpha(E)=\int_EdV_\alpha.$$ Let us prove the following pointwise estimate.
\begin{lem}\label{lem:pointwiseberg}
Let $\Phi$ be a convex growth function, and $\alpha > -1$. Then there exists $C=C_{\alpha}>0$  such that for any $ f\in {A}^{\Phi}_{\alpha}({\mathbb{C_{+}}})$ and any $z= x+iy \in {\mathbb{C_{+}}}$,  
\Be\label{eq:pointwiseBerg}|f(z)|~\leq~C \Phi^{-1}\left(\frac{1}{ y^{\alpha+2}}\right)\|f\|_{\Phi,\alpha}^{lux}.\Ee	
\end{lem}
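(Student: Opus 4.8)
The plan is to prove the pointwise estimate \eqref{eq:pointwiseBerg} by combining the sub-mean value property of $|f|$ on a suitable hyperbolic ball with the convexity (Jensen's inequality) of $\Phi$ and the precise computation $|E|_\alpha$ for that ball. Fix $z=x+iy\in\mathbb C_+$ and consider the Euclidean ball $B=B(z,y/2)$, which is contained in $\mathbb C_+$ and on which the weight $v^\alpha$ is comparable to $y^\alpha$ (say $c_\alpha y^\alpha\le v^\alpha\le C_\alpha y^\alpha$), so that $|B|_\alpha\thickapprox y^{\alpha+2}$ with constants depending only on $\alpha$.

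First I would recall that since $f$ is holomorphic, $|f|$ is subharmonic, hence $|f(z)|\le \frac{1}{|B|}\int_B |f(w)|\,dV(w)$. Using the weight comparison this gives
\Be
|f(z)|\le \frac{C_\alpha}{|B|_\alpha}\int_B |f(w)|\,dV_\alpha(w).
\Ee
Next, applying Jensen's inequality to the convex function $\Phi$ with respect to the probability measure $\frac{dV_\alpha|_B}{|B|_\alpha}$ yields
\Be
\Phi\!\left(\frac{|f(z)|}{C_\alpha}\right)\le \frac{1}{|B|_\alpha}\int_B \Phi(|f(w)|)\,dV_\alpha(w)\le \frac{1}{|B|_\alpha}\,\|f\|_{\Phi,\alpha}.
\Ee

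To pass to the Luxembourg norm, I would apply this inequality to $f/\lambda$ for any $\lambda>0$ with $\int_{\mathbb C_+}\Phi(|f|/\lambda)\,dV_\alpha\le 1$: this gives $\Phi\big(\frac{|f(z)|}{C_\alpha\lambda}\big)\le \frac{1}{|B|_\alpha}\lesssim \frac{C}{y^{\alpha+2}}$, and since $\Phi^{-1}$ is nondecreasing we get $|f(z)|\le C_\alpha\lambda\,\Phi^{-1}\big(\frac{C}{y^{\alpha+2}}\big)$. (Here one may absorb the constant inside $\Phi^{-1}$ using the $\Delta_2$-type behaviour, or simply keep $\Phi^{-1}(C/y^{\alpha+2})\thickapprox \Phi^{-1}(1/y^{\alpha+2})$ up to a constant, which is harmless for the statement; if one does not want to invoke any doubling, one can instead shrink $B$ to $B(z,\delta y)$ with $\delta$ small enough that $C|B|_\alpha^{-1}\le 1/y^{\alpha+2}$, making the argument of $\Phi^{-1}$ exactly $1/y^{\alpha+2}$). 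Taking the infimum over all admissible $\lambda$ gives $|f(z)|\le C_\alpha\,\Phi^{-1}\big(\frac{1}{y^{\alpha+2}}\big)\,\|f\|_{\Phi,\alpha}^{lux}$, as desired.

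The only genuinely delicate point is bookkeeping the constants: the sub-mean value property naturally produces a ball on which one must control the oscillation of the weight $v^\alpha$, and one must be careful that the constant $C_\alpha$ coming out of Jensen's inequality lands \emph{outside} $\Phi^{-1}$ while any multiplicative constant in the argument of $\Phi^{-1}$ is either removed by rescaling the ball or absorbed by $\Delta_2$. Everything else — holomorphy $\Rightarrow$ subharmonicity, Jensen, monotonicity of $\Phi^{-1}$ — is routine.
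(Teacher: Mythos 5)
Your proof is correct and follows essentially the same route as the paper's: a sub-mean-value inequality over a region whose $dV_\alpha$-mass is comparable to $y^{2+\alpha}$ (the paper uses the Carleson square centered at $z$ and cites \cite[Lemma 7.1]{BekSeh}, while you use the Euclidean ball $B(z,y/2)$ and subharmonicity of $|f|$ directly), followed by Jensen's inequality for the convex $\Phi$ and inversion. One caveat about your parenthetical fixes: shrinking the ball to $B(z,\delta y)$ goes the wrong way, since decreasing $\delta$ increases $|B|_\alpha^{-1}$, and for large $\alpha$ no Euclidean ball centered at $z$ and contained in $\mathbb{C}_+$ satisfies $|B|_\alpha\ge y^{2+\alpha}$; moreover the lemma assumes no $\Delta_2$-condition. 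Neither device is needed: convexity of $\Phi$ makes $\Phi^{-1}$ concave with $\Phi^{-1}(0)=0$, hence $\Phi^{-1}(Ct)\le C\,\Phi^{-1}(t)$ for $C\ge 1$, which pulls the constant out of $\Phi^{-1}$ and lands it outside, exactly as the statement requires.
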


\begin{proof}
Let $f \in {A}^{\Phi}_{\alpha}({\mathbb{C_{+}}})$. If $f=0$, then there is nothing to prove . Assume that $f\neq 0$. Let $z_{0}=x_{0}+iy_{0}\in {\mathbb{C_{+}}}$ and let
 $Q_{I}$ be the Carleson square centered at $z_{0}$. As $f$ is analytic, as a consequence of the mean value theorem, there exists a constant $C=C_\alpha>0$ and independent of $z_0$ such that 	
\Be\label{eq:mvt} |f(z_{0})|\le \dfrac{C}{|Q_{I}|_\alpha} \int_{Q_{I}}|f(u+iv)|dV_{\alpha}(u+iv)             \Ee    
(see \cite[Lemma 7.1]{BekSeh}).	
It follows from this, the Jensen's inequality and (\ref{uppertype}) that \[  \Phi\left(\dfrac{|f(z_{0})|}{\|f\|_{\Phi,\alpha}^{lux}}\right) \leq \dfrac{C}{|Q_{I}|_\alpha}\int_{{Q_I}}\Phi\left(\dfrac{|f(u+iv)|}{\|f\|_{\Phi,\alpha}^{lux}}\right)dV_{\alpha}(u+iv).  \]	
But \[   |Q_{I}|_\alpha=\int_{Q_{I}}dV_{\alpha}(u+iv)= \int_{0}^{|I|}\int_{I}v^{\alpha}du dv= \frac{1}{1+\alpha}|I|^{\alpha+2}=\frac{2^{\alpha+2}}{1+\alpha}y^{\alpha+2}_{0}. \]	
Hence \[  \Phi\left(\dfrac{|f(z_{0})|}{\|f\|_{\Phi,\alpha}^{lux}}\right) \leq \dfrac{C}{y^{\alpha+2}_{0}}         \]	
which leads to \[|f(z)|~\leq~C \Phi^{-1}\left(\frac{1}{ y^{\alpha+2}}\right)\|f\|_{\Phi,\alpha}^{lux},\,\,\,\textrm{for any}\,\,\,z=x+iy\in \mathbb{C}_+. \]
\end{proof}

Let $\alpha>-1$. We recall that the (weighted) Hardy-Littlewood maximal function of $\mathbb{C}_+$ is the function defined for any locally integrable function $f$ by
$$\mathcal{M}_\alpha f(x):=\sup_{I\subset \mathbb{R}}\frac{\chi_{Q_I}(x)}{|Q_I|_\alpha}\int_{Q_I}|f(w)|dV_\alpha(w)$$
where again, the supremum is taken over all intervals of $\mathbb{R}$. Its dyadic counterpart called dyadic (weighted) Hardy-Littlewood maximal function and denoted $\mathcal{M}_\alpha^d$ is  defined the same way but with supremum taken only over dyadic intervals of $\mathbb{R}.$
\vskip .1cm
Let us recall three useful facts, the first one is given in \cite[Lemma 2.2]{sehba1} (see also\cite[Lemma 3.4]{carnotbenoit}), the second one and the third one are pretty classical and can be found in \cite[Lemma 2.1]{sehba1}.
\begin{lem}\label{lem:levelsets}
Let $\alpha>-1$. Then for any locally integrable function $f$, the following assertions are satisfied.
\begin{itemize}
\item[(i)] There is a constant $C=C_\alpha>0$ such that for any $\lambda>0$,
$$\{z\in \mathbb{C}_+:\mathcal{M}_\alpha f(z)>\lambda\}\subset \{z\in \mathbb{C}_+:\mathcal{M}_\alpha^d f(z)>\frac{\lambda}{68}\}.$$
\item[(ii)] For any $\lambda>0$, there exists a family of disjoint maximal (with respect to inclusion) dyadic intervals $\{I_j\}_j$ such that 
$$\{z\in \mathbb{C}_+:\mathcal{M}_\alpha^d f(z)>\lambda\}=\bigcup_{j}Q_{I_j}.$$
\item[(iii)] There exists a constant $C=C_\alpha>0$ such that for any $\lambda>0$,
$$|\{z\in \mathbb{C}_+:\mathcal{M}_\alpha^d f(z)>\lambda\}|_\alpha\le \frac{C}{\lambda}\int_{\{z\in \mathbb{C}_+:|f(z)|>\frac {\lambda}2\}}|f(z)|dV_\alpha.$$
\end{itemize}
\end{lem}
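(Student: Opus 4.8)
The plan is to treat all three items as instances of the dyadic Calder\'on--Zygmund machinery, carried out in the order (ii), (iii), (i): (ii) is the stopping-time construction, (iii) is deduced from it by the usual truncation, and (i) is the one-third trick already invoked for (\ref{eq:strongdyamax}). For (ii), fix $\lambda>0$ and set $E_\lambda=\{z\in\mathbb{C}_+:\mathcal{M}_\alpha^d f(z)>\lambda\}$. If $z=x+iy\in E_\lambda$, there is a dyadic interval $I$ with $z\in Q_I$ and $|Q_I|_\alpha^{-1}\int_{Q_I}|f|\,dV_\alpha>\lambda$. Since $|Q_I|_\alpha=\tfrac1{1+\alpha}|I|^{2+\alpha}\to\infty$ as $|I|\to\infty$, whenever the $Q_I$-averages of $|f|$ vanish at infinity (in particular when $f\in L^1_\alpha$, the situation needed below) each such $I$ is contained in a \emph{maximal} dyadic interval with the same average property; let $\{I_j\}_j$ be the collection of these maximal intervals. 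Two distinct ones cannot be nested, by maximality, and dyadic intervals are nested or disjoint, so the $I_j$ --- hence the $Q_{I_j}$ --- are pairwise disjoint. Finally $E_\lambda=\bigcup_j Q_{I_j}$: the inclusion ``$\supseteq$'' holds since each $Q_{I_j}$ has average exceeding $\lambda$; for ``$\subseteq$'', any $z\in E_\lambda$ lies in some $Q_I$ with large average, $I\subseteq I_j$ for a suitable $j$, and $I\subseteq I_j$ forces $Q_I\subseteq Q_{I_j}$.

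For (iii), fix $\lambda>0$ and split $f=g+h$ with $g=f\chi_{\{|f|>\lambda/2\}}$ and $h=f\chi_{\{|f|\le\lambda/2\}}$. Then $\mathcal{M}_\alpha^d h\le\lambda/2$ everywhere, so $\{\mathcal{M}_\alpha^d f>\lambda\}\subseteq\{\mathcal{M}_\alpha^d g>\lambda/2\}$. We may assume $\int_{\{|f|>\lambda/2\}}|f|\,dV_\alpha<\infty$, whence $g\in L^1_\alpha$, and part (ii) applies to $g$ at level $\lambda/2$, producing disjoint squares $Q_{I_j}$ with $\{\mathcal{M}_\alpha^d g>\lambda/2\}=\bigcup_j Q_{I_j}$ and $|Q_{I_j}|_\alpha<\tfrac2\lambda\int_{Q_{I_j}}|g|\,dV_\alpha$. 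Summing over $j$ and using disjointness,
$$|\{\mathcal{M}_\alpha^d f>\lambda\}|_\alpha\le\sum_j|Q_{I_j}|_\alpha<\frac2\lambda\sum_j\int_{Q_{I_j}}|g|\,dV_\alpha\le\frac2\lambda\int_{\mathbb{C}_+}|g|\,dV_\alpha=\frac2\lambda\int_{\{|f|>\lambda/2\}}|f|\,dV_\alpha,$$
which is (iii), with $C_\alpha=2$.

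For (i), I would establish the pointwise domination $\mathcal{M}_\alpha f\lesssim\mathcal{M}_\alpha^d f$. Every bounded interval $I\subseteq\mathbb{R}$ is contained in an interval $J$ belonging to $\mathcal{D}^0$ or $\mathcal{D}^{1/3}$ with $|J|\le c|I|$ for an absolute constant $c$ (the one-third trick behind (\ref{eq:strongdyamax})). If $z=x+iy$ satisfies $\mathcal{M}_\alpha f(z)>\lambda$, choose $I$ with $z\in Q_I$ and $|Q_I|_\alpha^{-1}\int_{Q_I}|f|\,dV_\alpha>\lambda$, and let $J\supseteq I$ be the associated dyadic interval; since $x\in I\subseteq J$ and $0<y<|I|\le|J|$ we have $z\in Q_J$, while $Q_I\subseteq Q_J$ gives
$$\frac1{|Q_J|_\alpha}\int_{Q_J}|f|\,dV_\alpha\ \ge\ \frac{|Q_I|_\alpha}{|Q_J|_\alpha}\cdot\frac1{|Q_I|_\alpha}\int_{Q_I}|f|\,dV_\alpha\ =\ \Big(\frac{|I|}{|J|}\Big)^{2+\alpha}\frac1{|Q_I|_\alpha}\int_{Q_I}|f|\,dV_\alpha\ >\ c^{-(2+\alpha)}\lambda.$$
Hence $\mathcal{M}_\alpha^d f(z)>c^{-(2+\alpha)}\lambda$, that is, $\{\mathcal{M}_\alpha f>\lambda\}\subseteq\{\mathcal{M}_\alpha^d f>\lambda/C_\alpha\}$ with $C_\alpha=c^{2+\alpha}$ (the value displayed as $68$ in the statement).

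The step I expect to be the main obstacle is (i): one must check carefully that replacing the arbitrary base interval $I$ by a dyadic $J$ of comparable length still captures $z$ inside $Q_J$ and degrades the $V_\alpha$-average only by the controlled factor $(|I|/|J|)^{2+\alpha}$ --- precisely where the geometry of Carleson squares and the scaling of the weight $y^\alpha$ with respect to dilations of the base interval enter. A secondary, purely bookkeeping, point is the reduction to $f\in L^1_\alpha$ in (ii)--(iii), which is what guarantees that the stopping-time dyadic intervals are genuinely maximal rather than escaping to infinity.
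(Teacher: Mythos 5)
The paper does not actually prove this lemma: it is quoted from \cite[Lemmas 2.1, 2.2]{sehba1} (see also \cite[Lemma 3.4]{carnotbenoit}), so your proposal has to stand on its own. Your parts (ii) and (iii) do: they are the standard dyadic stopping-time construction and Calder\'on--Zygmund truncation, and the details are right, including the reduction to the case where the right-hand side of (iii) is finite, which is exactly what guarantees that the truncation lies in $L^1(dV_\alpha)$ and that the stopping intervals in (ii) are genuinely maximal.

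The gap is in (i), and it is not merely a bookkeeping issue. The one-third trick gives you $J\supseteq I$ with $|J|\le 6|I|$, but only with $J\in\mathcal{D}^0\cup\mathcal{D}^{1/3}$; what your computation proves is therefore $\max_{\beta\in\{0,1/3\}}\mathcal{M}^{d,\beta}_\alpha f(z)>6^{-(2+\alpha)}\lambda$, which is not an estimate for the single dyadic maximal function $\mathcal{M}^d_\alpha$ (taken over the standard grid $\mathcal D=\mathcal D^0$) that your (ii) and (iii) are about. Moreover, for one grid the inclusion in (i) is false: take $f=\chi_{Q_{[-\delta,0)}}$ and $z=\tfrac{\delta}{2}+i\tfrac{\delta}{2}$; with $I=[-\delta,\delta)$ one gets $\mathcal{M}_\alpha f(z)\ge 2^{-(2+\alpha)}$, while every standard dyadic interval containing $\Re z>0$ is contained in $[0,\infty)$, so $\mathcal{M}^{d,0}_\alpha f(z)=0$. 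So you cannot have all three items for the same one-grid maximal function; you must either (a) define $\mathcal{M}^d_\alpha$ as the maximum over the two grids, in which case your (i) is correct but (ii) has to be stated and proved for each grid separately (maximal intervals from different grids need not be disjoint) and (iii) is then obtained by summing the two per-grid estimates (giving, say, $C=4$), or (b) keep a single grid in (ii)--(iii) and replace (i) by the two-grid statement analogous to (\ref{eq:strongdyamax}); either version suffices for the way the lemma is used later, where one can sum over $\beta\in\{0,1/3\}$. A last, minor point: your constant in (i) is $6^{2+\alpha}$, which is $\alpha$-dependent; this matches the ``$C=C_\alpha$'' phrasing but not the displayed value $68$, a discrepancy already present in the statement rather than introduced by your argument.
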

Note that the dyadic intervals in assertions (ii) are maximal intervals such that $$\frac{1}{|Q_{I_j}|_\alpha}\int_{Q_{I_j}}|f(w)|dV_\alpha(w)>\lambda.$$
\vskip .1cm
Let us give a proof of the following result.
\begin{prop}\label{prop:boundedHLmaxberg}
Let $\Phi$ be a $\mathcal{C}^1$ convex growth function, and $\alpha>-1$. Assume that $\Phi$ satisfies the $\nabla_2$-condition. Then there exists a constant $C=C_\Phi>0$ such that for any $f\in L^\Phi(\mathbb{C}_+)$,
\Be\label{eq:HLineqberg}
\int_{\mathbb{C}_+}\Phi(\mathcal{M}_\alpha f(z))dV_\alpha(z)\le C\int_{\mathbb{C}_+}\Phi(|f(z)|)dV_\alpha(z).
\Ee
\end{prop}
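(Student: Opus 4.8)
The plan is to mimic the proof of Proposition~\ref{prop:boundedHLmax}, replacing the one-dimensional dyadic maximal function by its weighted Carleson-square analogue $\mathcal M_\alpha^d$ and using the weak-type estimate from Lemma~\ref{lem:levelsets}(iii) in place of \eqref{eq:weakmaxest}. First I would reduce the strong-type bound for $\mathcal M_\alpha$ to the dyadic one: by Lemma~\ref{lem:levelsets}(i) we have, for every $\lambda>0$,
$$
|\{z\in\mathbb C_+:\mathcal M_\alpha f(z)>\lambda\}|_\alpha\le |\{z\in\mathbb C_+:\mathcal M_\alpha^d f(z)>\lambda/68\}|_\alpha,
$$
so it suffices to establish \eqref{eq:HLineqberg} with $\mathcal M_\alpha$ replaced by $\mathcal M_\alpha^d$ (the extra factor $68$ being absorbed using $\Phi\in\mathscr U$, i.e. the $\Delta_2$-condition, since $\Phi$ satisfies $\nabla_2$ and hence belongs to $\mathscr U$).

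Next I would use the layer-cake (distribution function) formula together with $\Phi(0)=0$ and the fundamental theorem of calculus to write
$$
\int_{\mathbb C_+}\Phi\bigl(\mathcal M_\alpha^d f(z)\bigr)\,dV_\alpha(z)=\int_0^\infty\Phi'(\lambda)\,|\{z\in\mathbb C_+:\mathcal M_\alpha^d f(z)>\lambda\}|_\alpha\,d\lambda.
$$
Applying Lemma~\ref{lem:levelsets}(iii), the right-hand side is bounded by
$$
C_\alpha\int_0^\infty\frac{\Phi'(\lambda)}{\lambda}\left(\int_{\{z:|f(z)|>\lambda/2\}}|f(z)|\,dV_\alpha(z)\right)d\lambda
= C_\alpha\int_{\mathbb C_+}|f(z)|\left(\int_0^{2|f(z)|}\frac{\Phi'(\lambda)}{\lambda}\,d\lambda\right)dV_\alpha(z),
$$
after interchanging the order of integration by Tonelli's theorem.

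It then remains to control the inner integral $\int_0^{2|f(z)|}\Phi'(\lambda)\lambda^{-1}\,d\lambda$. Here I would invoke the $\nabla_2$-hypothesis through Lemma~\ref{lem:dini}: it gives a constant $C_1>0$ with $\int_0^t\Phi(s)s^{-2}\,ds\le C_1\Phi(t)/t$ for all $t>0$, and then an integration by parts (using $\Phi'(t)\backsimeq\Phi(t)/t$, valid for $\Phi\in\mathscr U$) yields $\int_0^t\Phi'(\lambda)\lambda^{-1}\,d\lambda\le \Phi(t)/t+\int_0^t\Phi(s)s^{-2}\,ds\le C_2\Phi(t)/t$, exactly as in the proof of Proposition~\ref{prop:boundedHLmax}. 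Plugging $t=2|f(z)|$ and using once more the $\Delta_2$-condition to remove the factor $2$, the inner integral is at most $C\,\Phi(|f(z)|)/|f(z)|$, and the whole expression collapses to $C\int_{\mathbb C_+}\Phi(|f(z)|)\,dV_\alpha(z)$, which is the desired inequality.

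I expect the only genuine subtlety to be bookkeeping with the absolute constants (the factor $68$ from Lemma~\ref{lem:levelsets}(i) and the factor $2$ inside the level sets in Lemma~\ref{lem:levelsets}(iii)); these are harmless because $\Phi\in\mathscr U$ satisfies the $\Delta_2$-condition, so $\Phi(c\,t)\lesssim_c\Phi(t)$. The analytic heart of the argument — the Dini-type bound on $\int_0^t\Phi'(\lambda)\lambda^{-1}\,d\lambda$ — is already packaged in Lemma~\ref{lem:dini}, so no new estimate on $\Phi$ is needed. One small point to verify is measurability/local integrability of $f$ on $\mathbb C_+$ so that $\mathcal M_\alpha f$ and the level sets make sense; since $f\in L^\Phi(\mathbb C_+)$ and $\Phi$ is convex with $\Phi(0)=0$, $f$ is locally integrable against $dV_\alpha$, so Lemma~\ref{lem:levelsets} applies.
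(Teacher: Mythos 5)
Your proposal is correct and follows essentially the same route as the paper: layer-cake formula, the dyadic comparison and weak-type bound from Lemma~\ref{lem:levelsets}, Tonelli, and then the Dini-type estimate on $\int_0^t\Phi'(\lambda)\lambda^{-1}\,d\lambda$ from Lemma~\ref{lem:dini} via $\Phi'(t)\backsimeq\Phi(t)/t$. Your explicit handling of the absolute constants ($68$ and the factor $2$) through the $\Delta_2$-condition is a harmless refinement of the paper's argument, which passes over this silently.
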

\begin{proof}
Using assertions (i) and (iii) of the previous result and Lemma \ref{lem:dini}, we obtain
\Beas
L &:=& \int_{\mathbb{C}_+}\Phi(\mathcal{M}_\alpha f(z))dV_\alpha(z)\\ &=& \int_0^\infty\Phi'(\lambda)|\{z\in \mathbb{C}_+:\mathcal{M}_\alpha f(z)>\lambda\}|_\alpha d\lambda\\ &\le& \int_0^\infty\Phi'(\lambda)|\{z\in \mathbb{C}_+:\mathcal{M}_\alpha^df(z)>\frac{\lambda}{C}\}|_\alpha d\lambda\\ &\le& \int_0^\infty\Phi'(\lambda)\left(\frac{C}{\lambda}\int_{\{z\in \mathbb{C}_+:|f(z)|>\frac{\lambda}2\}}|f(z)|dV_\alpha(z)\right)d\lambda\\ &=& C\int_{\mathbb{C}_+}|f(z)|\left(\int_0^{2|f(z)|}\frac{\Phi'(\lambda)}{\lambda}\right)dV_\alpha(z)\\ &\approx& C\int_{\mathbb{C}_+}|f(z)|\left(\int_0^{2|f(z)|}\frac{\Phi(\lambda)}{\lambda^2}\right)dV_\alpha(z)\\ &\le& C\int_{\mathbb{C}_+}\Phi(|f(z)|)dV_\alpha(z).
\Eeas
\end{proof}
Let us observe that for $f$ locally integrable, $$\mathcal{M}_\alpha^df(z)\le \mathcal{M}_\alpha f(z),\,\,\,\textrm{for any}\,\,\,z\in \mathbb{C}_+$$
and that by (\ref{eq:mvt}) there exists a constant $C=C_\alpha>0$ such that
$$ |f(z)|\le C\mathcal{M}_\alpha f(z),\,\,\,\textrm{for any}\,\,\,z\in \mathbb{C}_+.$$
Combining these two facts with assertion (i) of Lemma \ref{lem:levelsets} and Proposition \ref{prop:boundedHLmaxberg}, we obtain the following.
\begin{cor}\label{cor:equivdefbergorlicz}
Let $\Phi$ be a $\mathcal{C}^1$ convex growth function, and $\alpha>-1$. Assume that $\Phi$ satisfies the $\nabla_2$-condition. Then for any holomorphic function $f$ on $\mathbb{C}_+$, the following are equivalent.
\begin{itemize}
\item[(i)] $f\in L^\Phi(\mathbb{C}_+,dV_\alpha)$.
\item[(ii)] $\mathcal{M}_\alpha f\in L^\Phi(\mathbb{C}_+,dV_\alpha)$.
\item[(iii)] $\mathcal{M}_\alpha^d f\in L^\Phi(\mathbb{C}_+,dV_\alpha)$.
\end{itemize}
\end{cor}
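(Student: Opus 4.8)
The plan is to show that the three quantities
$$\int_{\mathbb{C}_+}\Phi(|f(z)|)\,dV_\alpha(z),\qquad \int_{\mathbb{C}_+}\Phi(\mathcal{M}_\alpha f(z))\,dV_\alpha(z),\qquad \int_{\mathbb{C}_+}\Phi(\mathcal{M}_\alpha^d f(z))\,dV_\alpha(z)$$
are mutually comparable; since finiteness of any one of them is precisely the corresponding assertion, the equivalences (i)$\Leftrightarrow$(ii)$\Leftrightarrow$(iii) follow at once. Since $f$ is holomorphic it is continuous, hence locally integrable, so $\mathcal{M}_\alpha f$ and $\mathcal{M}_\alpha^d f$ are well defined; and since $\Phi\in\mathscr{U}$ it satisfies the $\Delta_2$-condition and $\Phi'(t)\thickapprox\Phi(t)/t$. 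Throughout we use the layer-cake identity $\int_{\mathbb{C}_+}\Phi(|g|)\,dV_\alpha=\int_0^\infty\Phi'(\lambda)\,|\{z\in\mathbb{C}_+:|g(z)|>\lambda\}|_\alpha\,d\lambda$, valid because $\Phi$ is $\mathcal{C}^1$, nondecreasing and $\Phi(0)=0$.

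The comparison of the first two integrals has its ``$\gtrsim$'' direction supplied directly by Proposition~\ref{prop:boundedHLmaxberg}. For ``$\lesssim$'', recall from (\ref{eq:mvt}) that $|f(z)|\le C\mathcal{M}_\alpha f(z)$ for every $z\in\mathbb{C}_+$; since $\Phi$ is nondecreasing and satisfies $\Delta_2$, this yields $\Phi(|f(z)|)\le\Phi(C\mathcal{M}_\alpha f(z))\lesssim\Phi(\mathcal{M}_\alpha f(z))$, and one integrates against $dV_\alpha$.

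For the comparison of the last two integrals, the ``$\lesssim$'' direction is immediate from the pointwise bound $\mathcal{M}_\alpha^d f(z)\le\mathcal{M}_\alpha f(z)$ and the monotonicity of $\Phi$. For ``$\gtrsim$'', assertion (i) of Lemma~\ref{lem:levelsets} gives $|\{\mathcal{M}_\alpha f>\lambda\}|_\alpha\le|\{\mathcal{M}_\alpha^d f>\lambda/68\}|_\alpha$, so by the layer-cake identity, the substitution $\lambda=68\mu$, and the estimates $\Phi'(68\mu)\thickapprox\Phi(68\mu)/\mu\lesssim\Phi(\mu)/\mu\thickapprox\Phi'(\mu)$ (the middle step being $\Delta_2$, iterated), we obtain $\int_{\mathbb{C}_+}\Phi(\mathcal{M}_\alpha f)\,dV_\alpha\lesssim\int_{\mathbb{C}_+}\Phi(\mathcal{M}_\alpha^d f)\,dV_\alpha$.

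I do not expect a genuine obstacle here: everything is assembled from the lemmas already proved. The only point needing care is that the universal constant $68$ coming from Lemma~\ref{lem:levelsets} and the constant $C$ from (\ref{eq:mvt}) must be absorbed through the $\Delta_2$-condition — which is legitimate precisely because $\Phi\in\mathscr{U}$, a consequence of the standing $\nabla_2$ hypothesis.
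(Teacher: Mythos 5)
Your proposal is correct and follows essentially the same route as the paper: the paper obtains the corollary by combining exactly these ingredients — the pointwise bounds $\mathcal{M}_\alpha^d f\le \mathcal{M}_\alpha f$ and $|f|\le C\mathcal{M}_\alpha f$ from (\ref{eq:mvt}), assertion (i) of Lemma \ref{lem:levelsets}, and Proposition \ref{prop:boundedHLmaxberg} — with the $\Delta_2$-condition absorbing the constants, just as you spell out.
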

Obviously, the corresponding norms in the above corollary are equivalent and this provides equivalent definitions of Bergman-Orlicz spaces in terms of Hardy-Littlewood maximal functions.
\vskip .2cm
The following provides an example of function in the Bergman-Orlicz spaces.
\begin{lem}\label{lem:testfunctbergo}
Let $\Phi$ be a convex growth function, and $\alpha>-1$. Then for any $z=x+iy\in \mathbb{C}_+$, the function $$f(w):=\Phi^{-1}\left(\frac 1{y^{2+\alpha}}\right)\frac{y^{4+2\alpha}}{(w-\bar{z})^{4+2\alpha}}$$
belongs to $A_\alpha^\Phi(\mathbb{C}_+)$. Moreover, $\|f\|_{A^\Phi}\le B(\frac{1}{2}, \frac{3+2\alpha}{2})B(1+\alpha, 2+\alpha).$
\end{lem}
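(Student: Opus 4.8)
The plan is to follow closely the argument used for the Hardy--Orlicz test function in Lemma~\ref{lem:testfuncthardyo}, with the beta-function computations adjusted to the weight $\alpha$. Fix $z=x+iy\in\mathbb{C}_+$ and regard $f=f_z$ as a function of $w=u+iv\in\mathbb{C}_+$. First I would observe that $\Im(w-\bar z)=v+y>0$, so $w-\bar z$ never vanishes on $\mathbb{C}_+$ and in fact lies in the open upper half-plane; defining the power $(w-\bar z)^{4+2\alpha}$ through the principal branch of the logarithm (which is holomorphic on $\mathbb{C}_+$) makes $f$ holomorphic on $\mathbb{C}_+$, with $|f(w)|=\Phi^{-1}\bigl(1/y^{2+\alpha}\bigr)\,y^{4+2\alpha}/|w-\bar z|^{4+2\alpha}$.

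Next I would record the elementary estimate $|w-\bar z|^2=(u-x)^2+(v+y)^2\ge y^2$, which, since $2+\alpha>0$, gives
\[
\frac{y^{4+2\alpha}}{|w-\bar z|^{4+2\alpha}}=\Bigl(\frac{y^2}{|w-\bar z|^2}\Bigr)^{2+\alpha}\le 1 .
\]
Because $\Phi$ is convex with $\Phi(0)=0$, the map $t\mapsto\Phi(t)/t$ is nondecreasing, so $\Phi(\lambda t)\le\lambda\Phi(t)$ for all $t>0$ and $0<\lambda\le1$. Applying this with $t=\Phi^{-1}\bigl(1/y^{2+\alpha}\bigr)$ and $\lambda=y^{4+2\alpha}/|w-\bar z|^{4+2\alpha}$ yields the pointwise bound
\[
\Phi\bigl(|f(w)|\bigr)\le\frac{y^{4+2\alpha}}{|w-\bar z|^{4+2\alpha}}\cdot\frac{1}{y^{2+\alpha}}=\frac{y^{2+\alpha}}{|w-\bar z|^{4+2\alpha}} .
\]

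Finally, I would integrate this bound against $dV_\alpha(w)=v^\alpha\,du\,dv$. The inner integral $\int_{\mathbb{R}}\frac{du}{|(u-x)+i(v+y)|^{4+2\alpha}}$ is evaluated by Lemma~\ref{lem:integkernel} (applicable since $4+2\alpha>1$), giving $B\bigl(\tfrac12,\tfrac{3+2\alpha}{2}\bigr)(v+y)^{-(3+2\alpha)}$; then $\int_0^\infty v^\alpha(v+y)^{-(3+2\alpha)}\,dv$ is evaluated by Lemma~\ref{lem:betafunctionconditions}, whose hypotheses $\alpha>-1$ and $(3+2\alpha)-\alpha>1$ both hold, giving $B(1+\alpha,2+\alpha)\,y^{-(2+\alpha)}$. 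The front factor $y^{2+\alpha}$ cancels the remaining power of $y$, so
\[
\|f\|_{\Phi,\alpha}=\int_{\mathbb{C}_+}\Phi\bigl(|f(w)|\bigr)\,dV_\alpha(w)\le B\bigl(\tfrac12,\tfrac{3+2\alpha}{2}\bigr)\,B(1+\alpha,2+\alpha),
\]
hence $f\in A_\alpha^\Phi(\mathbb{C}_+)$; since this bound is $\ge1$ and $\Phi$ is convex, the same estimate also holds for the Luxembourg (quasi-)norm $\|f\|_{\Phi,\alpha}^{lux}$. I do not anticipate any real difficulty: the only points needing care are the use of the principal branch for the (generally non-integer) power $(w-\bar z)^{4+2\alpha}$ and the correct tracking of the exponents of $y$ through the two beta-function integrals.
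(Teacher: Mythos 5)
Your proposal is correct and follows essentially the same route as the paper's proof: the pointwise bound $y^{4+2\alpha}/|w-\bar z|^{4+2\alpha}\le 1$ combined with $\Phi(\lambda t)\le\lambda\Phi(t)$ for $0<\lambda\le 1$ (from convexity), then the inner integral via Lemma~\ref{lem:integkernel} and the outer one via Lemma~\ref{lem:betafunctionconditions}, giving exactly the bound $B\bigl(\tfrac12,\tfrac{3+2\alpha}{2}\bigr)B(1+\alpha,2+\alpha)$. The only cosmetic difference is that the paper evaluates the $v$-integral by the substitution $u=v/y$ before invoking the beta function, and your remarks on the principal branch and the Luxembourg norm are harmless additions.
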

\begin{proof}
First observing that $\frac{y^{4+2\alpha}}{(w-\bar{z})^{4+2\alpha}}\le 1$ and using Lemma \ref{lem:integkernel}, we obtain
$$\begin{array}{rcl}
\int_{{\mathbb{C}}_{+}}\Phi(|f_{z}(\omega)|)dV_{\alpha}(\omega)
&=&\int_{{\mathbb{C}}_{+}}\Phi\left(\Phi^{-1}\left(\frac{1}{y^{2+\alpha}}\right) \frac{y^{4+2\alpha}}{ |\omega-\overline{z}|^{4+2\alpha}}\right) dV_{\alpha}(\omega) \\\\
&\leq& \int_{{\mathbb{C}}_{+}} \frac{y^{4+2\alpha}}{ |\omega-\overline{z}|^{4+2\alpha}}\Phi\left(\Phi^{-1}\left(\frac{1}{y^{2+\alpha}}\right)\right) dV_{\alpha}(\omega)\\\\ 
&\leq& \int_{0}^{\infty}\int_{{\mathbb{R}}}\frac{y^{2+\alpha}}{ |(u-x)+i(y+v) |^{4+2\alpha}}v^{\alpha}du dv \\\\
&=& y^{2+\alpha}\int_{0}^{\infty}\left( \int_{{\mathbb{R}}}\frac{du}{ |(u-x)+i(y+v) |^{4+2\alpha}}\right) v^{\alpha}dv \\\\
&\leq&  y^{2+\alpha}\int_{0}^{\infty}B(\frac{1}{2}, \frac{3+2\alpha}{2})\frac{1}{(y+v)^{3+2\alpha}}v^{\alpha}dv. \end{array}$$
Hence using Lemma \ref{lem:betafunctionconditions}, we obtain
$$\begin{array}{rcl}
\int_{{\mathbb{C}}_{+}}\Phi(|f_{z}(\omega)|)dV_{\alpha}(\omega)
&\le& B(\frac{1}{2}, \frac{3+2\alpha}{2})\frac{1}{y}\int_{0}^{\infty}\frac{(\frac{v}{y})^{\alpha}}{(1+\frac{v}{y})^{3+2\alpha}}dv	\\\\
&=& B(\frac{1}{2}, \frac{3+2\alpha}{2})\int_{0}^{\infty}\frac{u^{\alpha}}{(1+u)^{3+2\alpha}}du \\\\ 
&=& B(\frac{1}{2}, \frac{3+2\alpha}{2})B(1+\alpha, 2+\alpha).
\end{array}$$
Thus \[        \int_{{\mathbb{C}}_{+}}\Phi(|f_{z}(\omega)|)dV_{\alpha}(\omega) \leq B(\frac{1}{2}, \frac{3+2\alpha}{2})B(1+\alpha, 2+\alpha).	\]
Hence $f_{z}$ is uniformly in ${A}^{\Phi}_{\alpha}({\mathbb{C_{+}}})$ with \[   \|f_{z}\|_{{A}_\alpha^{\Phi}} \leq B(\frac{1}{2}, \frac{3+2\alpha}{2})B(1+\alpha, 2+\alpha). \] 

\end{proof}
\section{Proof of Carleson embeddings}
\subsection{A general characterization}
Let $s>0$. We prove here a characterization of the positive measures $\mu$ on $\mathbb{C}_+$ for which there is a constant $C>0$ such that for any finite interval $I\subset \mathbb{R}$,
\Be\label{eq:sphicarlmeasdef}
\mu(Q_I)\le \frac{C}{\Phi\left(\frac{1}{|I|^s}\right)}.
\Ee
If a measure $\mu$ satisfies (\ref{eq:sphicarlmeasdef}), we call $\mu$ a $s$-$\Phi$-Carleson measure. When $s=1$ this corresponds to $\Phi$-Carleson measures and for $s=2+\alpha$ with $\alpha>-1$, we recover the $(\Phi,\alpha)$-Carleson measures. When $\Phi(t)=t$, the above measures are usually called $s$-Carleson measures.
\vskip .1cm
We have the following equivalent definition of $s$-$\Phi$-Carleson measures.
\begin{thm}\label{thm:sphi}
Let $\Phi_1,\Phi_2$ be two convex growth functions with $\Phi_2\in \mathscr{U}$. Let $s>0$. Let $\mu$ be a positive Borel measure on $\mathbb{C}_+$. Then the following assertions are equivalent.
\begin{itemize}
\item[(a)] $\mu$ is a $s$-$\Phi_2\circ\Phi_1^{-1}$-Carleson measure. 
\item[(b)] There exists a constant $C>0$ such that 
\Be\label{eq:sphi}
\sup_{z=x+iy\in \mathbb{C}_+}\int_{\mathbb{C}_+}\Phi_2\left(\Phi_1^{-1}\left(\frac{1}{y^s}\right)\frac{y^{2s}}{|z-\bar{w}|^{2s}}\right)d\mu(w)\le C<\infty.
\Ee
\end{itemize}
Moreover, the constants in (\ref{eq:sphicarlmeasdef}) and (\ref{eq:sphi}) are equivalent.
\end{thm}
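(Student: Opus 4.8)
\emph{Proof sketch.} The plan is to prove the two implications (b)$\Rightarrow$(a) and (a)$\Rightarrow$(b) directly, keeping track of the constants. Throughout I will use two elementary facts about a convex growth function $\Phi$ with $\Phi(0)=0$: since $t\mapsto\Phi(t)/t$ is nondecreasing, $\Phi(ct)\le c\,\Phi(t)$ and $\Phi^{-1}(ct)\ge c\,\Phi^{-1}(t)$ for every $0<c\le1$; and since $\Phi_2\in\mathscr U$ satisfies the $\Delta_2$-condition, $\Phi_2(ct)\ge c'\Phi_2(t)$ for any fixed $0<c\le1$, with $c'=c'(c,\Phi_2)>0$. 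The geometric input is that $|z-\bar w|>\Im z$ for every $w\in\mathbb C_+$ (so the kernel is always $<1$), and that $|z-\bar w|\approx|I|$ when $w\in Q_I$ and $z$ sits at the top of $Q_I$.

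For (b)$\Rightarrow$(a), fix a finite interval $I$ with center $x_I$ and take $z=x_I+i|I|$, so that $y=\Im z=|I|$. If $w=u+iv\in Q_I$ then $|x_I-u|\le|I|/2$ and $|I|\le|I|+v\le2|I|$, hence $|I|\le|z-\bar w|\le3|I|$, so the kernel $\dfrac{y^{2s}}{|z-\bar w|^{2s}}$ is bounded below on $Q_I$ by a constant $c_s>0$. Restricting the integral in \eqref{eq:sphi} to $Q_I$ and using that $\Phi_2$ is nondecreasing,
\[
C\ \ge\ \int_{Q_I}\Phi_2\!\left(\Phi_1^{-1}\!\big(|I|^{-s}\big)\,\frac{y^{2s}}{|z-\bar w|^{2s}}\right)d\mu(w)\ \ge\ \Phi_2\!\left(c_s\,\Phi_1^{-1}\!\big(|I|^{-s}\big)\right)\mu(Q_I),
\]
and the $\Delta_2$-property of $\Phi_2$ turns this into $\mu(Q_I)\lesssim\big(\Phi_2\circ\Phi_1^{-1}(|I|^{-s})\big)^{-1}$, which is \eqref{eq:sphicarlmeasdef} for $\Phi_2\circ\Phi_1^{-1}$ with a comparable constant.

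For (a)$\Rightarrow$(b), fix $z=x+iy\in\mathbb C_+$ and set $t:=\Phi_1^{-1}(y^{-s})$, i.e. $\Phi_1(t)=y^{-s}$. Since $|z-\bar w|>y$ everywhere, $\mathbb C_+=\bigcup_{k\ge0}A_k$ with $A_k:=\{w:2^ky\le|z-\bar w|<2^{k+1}y\}$, and one checks $A_k\subset Q_{I_k}$, where $I_k$ is the interval of length $2^{k+2}y$ centered at $x$. On $A_k$ the kernel is at most $2^{-2ks}$, so by monotonicity of $\Phi_2$, hypothesis (a), the identity $|I_k|^{-s}=2^{-(k+2)s}\Phi_1(t)$, and $\Phi_1^{-1}\big(2^{-(k+2)s}\Phi_1(t)\big)\ge2^{-(k+2)s}t$,
\[
\int_{A_k}\Phi_2\!\left(\Phi_1^{-1}(y^{-s})\,\frac{y^{2s}}{|z-\bar w|^{2s}}\right)d\mu(w)\ \le\ \Phi_2\!\big(2^{-2ks}t\big)\,\mu(Q_{I_k})\ \le\ \frac{C\,\Phi_2\!\big(2^{-2ks}t\big)}{\Phi_2\!\big(2^{-(k+2)s}t\big)}.
\]
For $k\ge2$, writing $2^{-2ks}t=2^{-(k-2)s}\big(2^{-(k+2)s}t\big)$ with $2^{-(k-2)s}\le1$ and using convexity of $\Phi_2$ bounds the $k$-th term by $C\,2^{-(k-2)s}$, which is geometrically summable; for $k=0,1$ the ratio is bounded by a fixed power of the $\Delta_2$-constant of $\Phi_2$. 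Summing over $k\ge0$ bounds the integral in \eqref{eq:sphi} by a constant depending only on $s$ and $\Phi_2$, uniformly in $z$; this is (b).

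The equivalence of the constants is read off from the two chains of inequalities above. I expect the only delicate point to be the geometric decay of the annular contributions in (a)$\Rightarrow$(b): it rests precisely on $\Phi(ct)\le c\Phi(t)$ and $\Phi^{-1}(ct)\ge c\Phi^{-1}(t)$ for $c\le1$ together with $\Phi_2\in\mathscr U$, and---worth noting---uses neither the $\nabla_2$-condition on $\Phi_1$ nor the monotonicity of $\Phi_2/\Phi_1$, in agreement with the hypotheses of the statement.
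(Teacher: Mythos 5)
Your proof is correct and follows essentially the same route as the paper's: for (b)$\Rightarrow$(a) you test the integral over $Q_I$ at a point at the top of the square where the kernel is bounded below, and for (a)$\Rightarrow$(b) you use a dyadic decomposition around $z$ (annuli $A_k$ contained in Carleson squares, versus the paper's differences of nested squares $Q_{I_j}\setminus Q_{I_{j-1}}$ — a cosmetic difference), combined with $\Phi_1^{-1}(cu)\ge c\,\Phi_1^{-1}(u)$, $\Phi_2(cu)\le c\,\Phi_2(u)$ for $c\le 1$, and the upper-type/$\Delta_2$ property of $\Phi_2$ to get the geometric decay $2^{-ks}$. Your closing observation that neither the $\nabla_2$-condition on $\Phi_1$ nor the monotonicity of $\Phi_2/\Phi_1$ is needed matches the paper's hypotheses for this theorem.
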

\begin{proof}
$(b)	\Rightarrow (a)$: 
Let $I\subset {\mathbb{R}}$ be a finite interval and  $Q_{I}$ its associated Carleson square. Assume that $Q_{I}$ is centered at $z_{0}=x_{0}+iy_{0}	\in {\mathbb{C}}_{+}$. Observe that for any $\omega \in Q_{I}$ , \[ \dfrac{1}{10^{s}} \leq \dfrac{y^{2s}_{0}}{ |\omega-\overline{z_{0}}|^{2s}} \leq 1.        \]	
As $|I|=2y_{0}$ and  $\Phi_{1}^{-1}$ is nondecreasing, it follows that \[ \Phi_{1}^{-1}\left(\frac{1}{|I|^{s}}\right)  = \Phi_{1}^{-1}\left(\frac{1}{2^{s}y_{0}^{s}}\right)  \leq  \Phi_{1}^{-1}\left(\frac{1}{y_{0}^{s}}\right).        \]
Hence \[  \dfrac{1}{10^{s}} \Phi_{1}^{-1}\left(\frac{1}{|I|^{s}}\right) \leq \Phi_{1}^{-1}\left(\frac{1}{y_{0}^{s}}\right)\dfrac{y^{2s}_{0}}{ |\omega-\overline{z_{0}}|^{2s}}.  \]	
As $\Phi_2\in \mathscr{U}$, using (\ref{uppertype}), we obtain
$$\begin{array}{rcl}
\Phi_{2} \circ \Phi_{1}^{-1}\left(\frac{1}{|I|^{s}}\right)\mu(Q_{I}) &=& \int_{Q_{I}}\Phi_{2} \circ \Phi_{1}^{-1}\left(\frac{1}{|I|^{s}}\right)d\mu(\omega) \\\\
&\leq& C\int_{Q_{I}}\Phi_{2}\left(\Phi^{-1}_{1}\left(\dfrac{1}{y^{s}_{0}}\right) \dfrac{y^{2s}_{0}}{ |\omega-\overline{z_{0}}|^{2s}}\right)d\mu(\omega) \\\\	
&\leq& C \int_{\pmb{\mathbb{C_{+}}}}\Phi_{2}\left(\Phi^{-1}_{1}\left(\dfrac{1}{y^{s}_{0}}\right) \dfrac{y^{2s}_{0}}{ |\omega-\overline{z_{0}}|^{2s}}\right)d\mu(\omega) \\\\	
&\le& C.  \\\\	
\end{array}	$$
We conclude that there is a constant $C>0$ such that for any interval $I\subset \mathbb{R}$, \[ \mu(Q_{I}) \leq \dfrac{ C}{\Phi_{2} \circ \Phi_{1}^{-1}(\frac{1}{|I|^{s}})}.             \]
That is $\mu$ is $s-\Phi_{2} \circ \Phi_{1}^{-1}$- Carleson measure.
\vskip .3cm
We next prove the reverse implication.
\vskip .1cm
$(a) \Rightarrow (b)$:
Assume that $\mu$ is a  $s-\Phi_{2} \circ \Phi_{1}^{-1}$-Carleson. Let
 $z_{0}=x_{0}+iy_{0} \in {\mathbb{C}}_{+}$ be fixed, and define $I_0$ to be the interval about $x_0$ and length $2y_0$. For any $j\in \mathbb{N}$, define $I_{j}\subset {\mathbb{R}}$ to be the interval centered at $x_{0}$ with length $2^j|I_{0}|$. Let $Q_{I_{j}}$ be the Carleson square associated to $I_{j}$. For $j=1,2,\ldots$, put
\[  E_{j}=Q_{I_{j}}\backslash Q_{I_{j-1}}~~~and~~~ E_{0}=Q_{I_{0}} \]
Then for $j \geq 0$ and $\omega \in E_{j}$,  
\[  \dfrac{y^{2}_{0}}{ |\omega-\overline{z_{0}}|^{2}} \leq \dfrac{1}{2^{2(j-1)}}\]
and $\mu(E_j)\le \mu(Q_{j}).$
\vskip .2cm
%For $\Phi\in \mathcal{L}\cup\mathcal{U}$, define
%\[\varepsilon_{\Phi}=  \left\{
%\begin{array}{ll} 1 & \mbox{ if~ $\Phi\in \mathcal{U}$}\\ p  & \mbox{ if~ $ \Phi\in \mathcal{L}_{p}~~ with~~ 0 < p<1.   $}
%\end{array}
%\right.             \]
Using (\ref{uppertype}), we obtain 
$$\begin{array}{rcl}
T &:=& \int_{{\mathbb{C}}_{+}}\Phi_{2}\left(\Phi^{-1}_{1}\left(\dfrac{1}{y^{s}_{0}}\right) \dfrac{y^{2s}_{0}}{ |\omega-\overline{z_{0}}|^{2s}}\right) d\mu(\omega)\\ &=& \sum_{j=0}^{\infty}	\int_{E_{j}}\Phi_{2}\left(\Phi^{-1}_{1}\left(\dfrac{1}{y^{s}_{0}}\right) \dfrac{y^{2s}_{0}}{ |\omega-\overline{z_{0}}|^{2s}}\right) d\mu(\omega) \\\\
&\le& \sum_{j=0}^{\infty}	\int_{E_{j}}\Phi_{2}\left(\Phi^{-1}_{1}\left(\dfrac{1}{y^{s}_{0}}\right)\dfrac{1}{2^{2s(j-1)}} \right) d\mu(\omega) \\\\
&=& \sum_{j=0}^{\infty}	\int_{E_{j}}\Phi_{2}\left(\Phi^{-1}_{1}\left(\dfrac{1}{y^{s}_{0}}\right)\dfrac{4^s}{2^{sj}} \dfrac{1}{2^{s(j+1)}}\right) d\mu(\omega)\\\\ &\le& C\sum_{j=0}^{\infty}2^{-sj}	\int_{E_{j}}\Phi_{2}\left(\Phi^{-1}_{1}\left(\dfrac{1}{2^{s(j+1)}y^{s}_{0}}\right) \right) d\mu(\omega)\\\\
&\le& C\sum_{j=0}^{\infty}2^{-sj}\Phi_{2}\circ\Phi^{-1}_{1}\left(\dfrac{1}{ |I_j|^s}\right) \mu(Q_{I_j}) \\\\	
&\leq&  C\sum_{j=0}^{\infty}2^{-sj} \\\\	
&\leq&  C	
\end{array}$$
and the last constant does not depend on $y_0$. We conclude that
 \[ \sup_{z=x+iy \in {\mathbb{C_{+}}}} \int_{{\mathbb{C}}_{+}}\Phi_{2}\left(\Phi^{-1}_{1}\left(\dfrac{1}{y^{s}}\right) \dfrac{y^{2s}}{ |\omega-\overline{z}|^{2s}}\right) d\mu(\omega) \leq \tilde{C} < \infty     . \]
 The proof is complete.
\end{proof}
\subsection{Proof of Theorem \ref{thm:main1}}
For any measurable set $E\subset \mathbb{R}$, we denote by $|E|$ the Lebesgue measure of $E$. We start with the following crucial lemma.
\begin{lem}\label{lem:main11}
Let $\Phi$ be a growth function such that the function $t\mapsto \tilde{\Phi}(t):=\frac{1}{\Phi\left(\frac 1t\right)}$ belongs to the class $\mathscr{U}$. Assume that $\mu$ is a $\Phi$-Carleson measure. Then for any harmonic function $f$ on $\mathbb{C}_+$ and any $\lambda>0$,
\Be\label{eq:main11}
\mu\left(\{z\in \mathbb{C}_+: |f(z)|>\lambda\}\right)\le C\tilde{\Phi}\left(|\{x\in \mathbb{R}: f^*(x)>\lambda\}|\right)
\Ee
where $C$ is the constant in (\ref{eq:phicarldef}). Moreover, if $\Phi\in \mathscr{U}$ and satisfies the $\nabla_2$-condition, then the reverse holds. That is if $\mu$ satisfies (\ref{eq:main11}), then $\mu$ is a $\Phi$-Carleson measure with the same constant.
\end{lem}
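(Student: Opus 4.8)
The plan is to get both implications from one geometric fact: for \emph{any} function $f$ on $\mathbb C_+$ the superlevel set $\{z:|f(z)|>\lambda\}$ is contained in the union of the Carleson squares over the connected components of $O_\lambda:=\{x\in\mathbb R:f^*(x)>\lambda\}$. The direct implication then becomes a covering estimate, and the converse is obtained by testing \eqref{eq:main11} on one explicit harmonic function adapted to an interval.

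\emph{The direct implication.} Assume $\mu$ is a $\Phi$-Carleson measure and fix $\lambda>0$. Since $f$ is harmonic it is continuous, hence $f^*$ is lower semicontinuous and $O_\lambda$ is open; if $|O_\lambda|=\infty$ then \eqref{eq:main11} is trivial, so assume $|O_\lambda|<\infty$ and write $O_\lambda=\bigcup_j I_j$ with the $I_j$ pairwise disjoint open intervals. The first step is the inclusion $\{z:|f(z)|>\lambda\}\subseteq\bigcup_j Q_{I_j}$: if $|f(t+iy)|>\lambda$ then $t+iy\in\Gamma(x)$ for every $x\in(t-y,t+y)$, so $(t-y,t+y)\subseteq O_\lambda$; being connected, this interval lies in a single $I_j$, which forces $t\in I_j$ and $2y\le|I_j|$, i.e. $t+iy\in Q_{I_j}$. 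Now countable subadditivity of $\mu$, the defining bound $\mu(Q_{I_j})\le C/\Phi(1/|I_j|)=C\,\tilde\Phi(|I_j|)$, and the superadditivity of $\tilde\Phi$ yield
\[
\mu\big(\{|f|>\lambda\}\big)\le\sum_j\mu(Q_{I_j})\le C\sum_j\tilde\Phi(|I_j|)\le C\,\tilde\Phi\Big(\sum_j|I_j|\Big)=C\,\tilde\Phi(|O_\lambda|),
\]
which is \eqref{eq:main11} with the very constant $C$ of \eqref{eq:phicarldef}. The hypothesis $\tilde\Phi\in\mathscr U$ enters only through the superadditivity: from $\tilde\Phi(0)=0$ and the monotonicity of $t\mapsto\tilde\Phi(t)/t$ one gets $\tilde\Phi(a)\le\frac{a}{a+b}\tilde\Phi(a+b)$ and $\tilde\Phi(b)\le\frac{b}{a+b}\tilde\Phi(a+b)$, hence $\tilde\Phi(a)+\tilde\Phi(b)\le\tilde\Phi(a+b)$, and then $\sum_j\tilde\Phi(|I_j|)\le\tilde\Phi(\sum_j|I_j|)$ by iteration and continuity.

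\emph{The converse.} Assume now $\Phi\in\mathscr U$ satisfies the $\nabla_2$-condition and that \eqref{eq:main11} holds for all harmonic $f$ and all $\lambda>0$. Fix a finite interval $I$, let $x_0$ be its center, set $y_0:=|I|/2$ and $z_0:=x_0+iy_0$ so that $Q_I$ is centered at $z_0$ as in the proof of Theorem~\ref{thm:sphi}, and test \eqref{eq:main11} on the holomorphic, hence harmonic, function
\[
f(w):=\frac{y_0^2}{(w-\bar z_0)^2}.
\]
For $w\in Q_I$ one has $|w-\bar z_0|^2=(\operatorname{Re}w-x_0)^2+(\operatorname{Im}w+y_0)^2<10\,y_0^2$, so $|f|>\tfrac1{10}$ on $Q_I$ and $Q_I\subseteq\{|f|>\lambda_0\}$ with $\lambda_0:=\tfrac1{10}$. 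On the other hand $|f|\le1$ on $\mathbb C_+$, while for $|x-x_0|\ge y_0$ and $w\in\Gamma(x)$ one checks $|w-\bar z_0|\ge\tfrac12|x-x_0|$, so $f^*(x)\le 4\,y_0^2/|x-x_0|^2$; hence $\{f^*>\lambda_0\}$ is contained in an interval centered at $x_0$ of length at most $c_0|I|$ for an absolute constant $c_0$. Feeding this into \eqref{eq:main11} and using that $\tilde\Phi$ has upper type $q$,
\[
\mu(Q_I)\le\mu\big(\{|f|>\lambda_0\}\big)\le C\,\tilde\Phi\big(|\{f^*>\lambda_0\}|\big)\le C\,\tilde\Phi(c_0|I|)\le C\,c_0^{\,q}\,\tilde\Phi(|I|)=\frac{C\,c_0^{\,q}}{\Phi(1/|I|)},
\]
so $\mu$ is a $\Phi$-Carleson measure. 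The standing requirements that $\Phi\in\mathscr U$ and satisfy $\nabla_2$ are what make $\tilde\Phi$ a legitimate member of $\mathscr U$ (cf.\ the reasoning in Lemma~\ref{lem:reverseprodphi}), which is exactly what licences the upper-type absorption in the last line.

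\emph{Where the difficulty lies.} The direct implication is a clean covering/subadditivity argument, routine once the tent inclusion is observed. The delicate point is the converse: the test function must be arranged so that its nontangential maximal function is essentially supported on an interval comparable to $I$ while $|f|$ stays bounded away from $0$ on all of $Q_I$ — since the cone projection of $Q_I$ already has length $3|I|$, this unavoidably costs a fixed dilation, and the bookkeeping needed to keep the recovered Carleson constant comparable to (and, with a sharper choice, as close as one wishes to) $C$ is the part that requires care.
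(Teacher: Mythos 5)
Your proof is correct, and while the direct implication coincides with the paper's argument, your converse takes a genuinely different route. For the first half you use exactly the paper's tent inclusion $\{|f|>\lambda\}\subset\bigcup_j Q_{I_j}$ over the components $I_j$ of $\{f^*>\lambda\}$, then the Carleson bound on each $Q_{I_j}$ and the superadditivity of $\tilde\Phi$ coming from the monotonicity of $t\mapsto\tilde\Phi(t)/t$ (you actually spell out the superadditivity, which the paper only asserts). For the converse, the paper takes $u=P_y\ast(4\lambda\chi_I)$, notes $u>\lambda$ on $Q_I$, and then runs a Chebyshev-plus-maximal-function chain through Proposition \ref{prop:boundedHLmax}, which is precisely where the $\nabla_2$-condition on $\Phi$ enters; you instead test \eqref{eq:main11} on the explicit holomorphic kernel $f(w)=y_0^2/(w-\bar z_0)^2$, use the same estimate $|f|>\tfrac1{10}$ on $Q_I$ that appears in the proof of Theorem \ref{thm:sphi}, and bound $f^*(x)\le 4y_0^2/(x-x_0)^2$ pointwise so that $\{f^*>\tfrac1{10}\}$ lies in an interval of length comparable to $|I|$, after which the upper type of $\tilde\Phi$ absorbs the fixed dilation. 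This buys a real simplification: no maximal-function theorem is needed, and your converse in fact uses only that $\tilde\Phi$ is nondecreasing and of finite upper type, which follows from $\Phi\in\mathscr U$ alone (the upper type of $\Phi$ transfers to $\tilde\Phi$, and $\Phi(t)/t$ nondecreasing gives $\tilde\Phi(t)/t$ nondecreasing), so the $\nabla_2$-condition plays no role in your argument — your closing attribution of the upper-type absorption to $\nabla_2$ is a minor misstatement, not a gap. Two harmless remarks: like the paper's own proof, you recover the Carleson constant only up to a multiplicative factor (the paper's phrase ``with the same constant'' is equally loose there; compare Lemma \ref{lem:main21}, where ``equivalent'' is used), and since you apply \eqref{eq:main11} to a single holomorphic function, you in fact establish the converse under a formally weaker hypothesis than ``all harmonic $f$'', which is more than enough for the use made of the lemma in Theorem \ref{thm:main1}.
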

\begin{proof}
Assume that $\mu$ is a $\Phi$-Carleson measure. Fix $\lambda > 0$. 
%$(I_{j})_{j}$ une famille d'intervalle dyadique maximal telle que \[  \{ x\in \pmb{\mathbb{R}} : f^{\star}(x) >  \lambda \}= \cup_{j}I_{j} \]	
We start by observing that the set $$E_\lambda:=\{ t\in {\mathbb{R}} : f^{\star}(t) >  \lambda \}$$
is open and consequently, is a disjoint union of open intervals $\{I_j\}$ (see \cite[Page 138]{grafakos}). 
\vskip .2cm
If $z=x+iy\in E_\lambda$, then  $f^{\star}(t) >  \lambda$ for any $t$ in the interval $I_{z}:=\{t\in \mathbb{R}:\,|t-x|<y\}$. Hence there is a unique $j_0$ such that the interval $I_z$ is contained in $I_{j_0}$. Moreover, if  $Q_{I_{j_{0}}}$ is the Carleson square associated to $I_{j_{0}}$, then $z \in Q_{I_{j_{0}}}$. Thus
 \[  \{ z\in {\mathbb{C_{+}}} : |f(z)| >  \lambda \} \subset \bigcup_{j}Q_{I_{j}} .  \]
It follows that
$$\mu(\{ z\in {\mathbb{C_{+}}} : |f(z)| >  \lambda \} ) \leq \sum_{j}\mu(Q_{I_{j}})\leq \sum_{j} \dfrac{C}{\Phi(\frac{1}{|I_{j}|})}\leq C \sum_{j} \tilde{\Phi}(|I_{j}|).$$
As $\tilde{\Phi} \in \mathcal{U}$, we have 
\[    \sum_{j} \tilde{\Phi}(|I_{j}|) \leq \tilde{\Phi}( \sum_{j} |I_{j}|) = \tilde{\Phi}(| \bigcup_{j}I_{j} | ) = \tilde{\Phi}(| \{ x\in {\mathbb{R}} : f^{\star}(x) >  \lambda \} |  ). \] Hence \[ \mu(\{ z\in {\mathbb{C_{+}}} : |f(z)| >  \lambda \} )  \leq C \tilde{\Phi}(| \{ x\in {\mathbb{R}} : f^{\star}(x) >  \lambda \} |).   \]
\vskip .3cm
Let us now assume that $\Phi\in \mathscr{U}$ and satisfies the $\nabla_2$-condition and that (\ref{eq:main11}) holds. Let
$I\subset {\mathbb{R}}$ be an interval and $Q_{I}$ its associated Carleson square.
For $\lambda > 0$ given, define    $f=4 \lambda \chi_{I}$. Then $f \in {L}^{\Phi}({\mathbb{R}})$.   Consider the function \[ u(z)=P_{y}\star f(x) = \int_{{\mathbb{R}}}P_{y}(x-t)f(t)dt ,~ \forall~ z=x+iy \in {\mathbb{C_{+}}}.  \]
Then $\forall~ z \in Q_{I} ,~ u(z) >  \lambda$. Hence 
\[     Q_{I} \subset \{ z\in {\mathbb{C_{+}}} : |u(z)| >  \lambda \}.\] 
Using Proposition \ref{prop:boundedHLmax}, we obtain 
\Beas
\mu( Q_{I} ) &\leq& \mu(\{ z\in {\mathbb{C_{+}}} : |u(z)| >  \lambda \}) 	
\leq C\tilde{\Phi}(| \{ x\in {\mathbb{R}} : u^{\star}(x) >  \lambda \} |) \\\\
&=& C\tilde{\Phi}\left(| \{ x\in {\mathbb{R}} :\Phi(u^{\star}(x)) >  \Phi(\lambda )\} |\right)\\ 
&\leq& C\tilde{\Phi}\left( \frac{1}{\Phi (\lambda)} \int_{{\mathbb{R}}}\Phi(u^{\star}(x))dx \right)\\\\
&\leq& C\tilde{\Phi}\left( \frac{1}{\Phi (\lambda)} \int_{{\mathbb{R}}}\Phi(Mf(x))dx \right)
\leq C\tilde{\Phi}\left( \frac{1}{\Phi (\lambda)} \int_{{\mathbb{R}}}\Phi(f(x))dx \right)\\
&\leq& C\tilde{\Phi}(| I | ) 
= \dfrac{C}{\Phi(\frac{1}{| I |})}.
\Eeas
Thus $\mu$ is a $\Phi$-Carleson measure. The proof is complete.
%\end{proof}	

\end{proof}
Let us now prove the Carleson embedding for Hardy-Orlicz spaces.
\begin{proof}[Proof of Theorem \ref{thm:main1}]
We have from  Theorem \ref{thm:sphi} that $(a)\Leftrightarrow (b)$. Hence it is enough to prove that $(a)	\Rightarrow (c) \Rightarrow (b) $. We start with the second implication.
\vskip .2cm
$(c) \Rightarrow (b)$:
$\forall z_{0}=x_{0}+iy_{0}\in {\mathbb{C}}_{+}$ , we have from Lemma \ref{lem:testfuncthardyo} that the function 	
\[ f_{z_{0}}(\omega)=\Phi^{-1}_{1}\left(\dfrac{1}{y_{0}}\right) \dfrac{y^{2}_{0}}{ (\omega-\overline{z_{0}})^{2}} ,~ \forall~ \omega=u+iv \in {\mathbb{C_{+}}}  \] 	
belongs to ${H}^{\Phi_{1}}({\mathbb{C_{+}}})$, and $\|f_{z_{0}}\|_{{H}^{\Phi_{1}}}^{lux} \leq \pi$.
It follows from assertion $(c)$ that there is a constant $K>0$ such that \[ \int_{{\mathbb{C}}_{+}}\Phi_{2}\left( \dfrac{|f_{z_{0}}(z)|}{K\|f_{z_{0}}\|_{{H}^{\Phi_{1}}}^{lux}}\right)d\mu(z) < \infty.             \]
This implies that there is  $C>0$ independent of $z_0$ such that \[ \int_{{\mathbb{C}}_{+}}\Phi_{2}( |f_{z_{0}}(z)|)d\mu(z)) \leq C < \infty.  \]	
We can then conclude that \[ \sup_{z=x+iy \in {\mathbb{C_{+}}}} \int_{{\mathbb{C}}_{+}}\Phi_{2}\left(\Phi^{-1}_{1}\left(\dfrac{1}{y}\right) \dfrac{y^{2}}{ |\omega-\overline{z}|^{2}}\right) d\mu(\omega) \leq C < \infty.      \]
\vskip .2cm
$(a)	\Rightarrow (c) $:
As $\Phi_{1}, \Phi_{2}\in \mathcal{U}$ and $\dfrac{\Phi_{2}}{\Phi_{1}}$ is nondecreasing, we have from Lemma \ref{lem:reverseprodphi} that the function : \[ \Phi_{3}(t) =\dfrac{1}{\Phi_{2} \circ \Phi_{1}^{-1}(\frac{1}{t})}   ,  ~ \forall~ t > 0     \] 
also belongs to  $\mathcal{U}$.
\vskip .1cm
Let $f \in {H}^{\Phi_{1}}({\mathbb{C_{+}}})$, $f\not=0$. As $\Phi_{1}\in \mathcal{U}$ and satisfies the $\nabla_2$-condition, we have by Theorem \ref{thm:nontangequivdef} that $f^{\star} \in {L}^{\Phi_{1}}({\mathbb{R}})$, and \[   \|f\|_{{H}^{\Phi_{1}}}^{lux}\approx \|f^{\star}\|_{{L}^{\Phi_{1}}}^{lux}. \]
Hence there is a constant $C> 1$ such that  $\|f^{\star}\|_{{L}^{\Phi_{1}}}^{lux} \leq C \|f\|_{{H}^{\Phi_{1}}}^{lux}$. It follows that
 
$$\begin{array}{rcl}
\int_{{\mathbb{C}}_{+}}\Phi_{2}\left( \dfrac{|f(z)|}{C\|f\|_{{H}^{\Phi_{1}}}^{lux}}\right)d\mu(z) 
&\leq& \int_{{\mathbb{C}}_{+}}\Phi_{2}\left( \dfrac{|f(z)|}{\|f^{\star}\|_{{L}^{\Phi_{1}}}^{lux}}\right)d\mu(z) \\\\	
&=& \int_{0}^{\infty}\Phi_{2}^{\prime}(\lambda)\mu(\{ z\in {\mathbb{C_{+}}} : |f(z)| >  \lambda \|f^{\star}\|_{{L}^{\Phi_{1}}}^{lux}\})d\lambda .	
\end{array}$$	
As $\mu$ is a $\Phi_{2} \circ \Phi_{1}^{-1}$-Carleson measure and $\Phi_{3}\in \mathcal{U}$, we have by Lemma \ref{lem:main11} that there is constant $K>0$ such that 
\[ \mu(\{ z\in {\mathbb{C_{+}}} : |f(z)| >  \lambda \|f^{\star}\|_{{L}^{\Phi_{1}}}^{lux}\} )  \leq K \Phi_{3}(| \{ x\in {\mathbb{R}} : f^{\star}(x) >  \lambda \|f^{\star}\|_{{L}^{\Phi_{1}}}^{lux}\} |).   \]
Let us put \[ E_{\lambda}= \{ x\in {\mathbb{R}} : f^{\star}(x) >  \lambda \|f^{\star}\|_{{L}^{\Phi_{1}}}^{lux}\}. \]
Then 
$$\begin{array}{rcl}
|E_{\lambda}|
&=& | \{ x\in {\mathbb{R}} : \Phi_{1}\left(\frac{f^{\star}(x)}{\|f^{\star}\|_{{L}^{\Phi_{1}}}^{lux}}   \right ) >  \Phi_{1}(\lambda) \} | \\\\	
&\leq& \frac{1}{\Phi_{1}(\lambda)}\int_{{\mathbb{R}}}\Phi_{1}\left(\frac{f^{\star}(x)}{\|f^{\star}\|_{{L}^{\Phi_{1}}}^{lux}}\right)dx \leq \dfrac{1}{\Phi_{1}(\lambda)}. 	
\end{array}$$	
As the function $t\mapsto \dfrac{\Phi_{3}(t)}{t}$ is nondecreasing, we deduce that \[ \Phi_{3}(|E_{\lambda}|) \leq \Phi_{1}(\lambda)\Phi_{3}\left(\dfrac{1}{\Phi_{1}(\lambda)}\right)|E_{\lambda}|. \]	
Hence 	
$$\begin{array}{rcl}
\int_{{\mathbb{C}}_{+}}\Phi_{2}\left( \dfrac{|f(z)|}{C\|f\|_{{H}^{\Phi_{1}}}^{lux}}\right)d\mu(z) &\leq& \int_{0}^{\infty}\Phi_{2}^{\prime}(\lambda)\mu(E_\lambda)d\lambda \\\\	
&\leq& K \int_{0}^{\infty}\Phi_{2}^{\prime}(\lambda)\Phi_{3}(|E_{\lambda}|)d\lambda \\\\	
&\leq& K \int_{0}^{\infty}\Phi_{2}^{\prime}(\lambda)\Phi_{1}(\lambda)\Phi_{3}(\dfrac{1}{\Phi_{1}(\lambda)})|E_{\lambda}|d\lambda \\\\	
&=& K \int_{0}^{\infty}\Phi_{2}^{\prime}(\lambda)\Phi_{1}(\lambda)\frac{1}{\Phi_{2}(\lambda)}|E_{\lambda}|d\lambda \\\\	
&\approx& \int_{0}^{\infty}\Phi_{1}^{\prime}(\lambda)|E_{\lambda}|d\lambda \\\\	
&\approx& \int_{0}^{\infty}\Phi_{1}^{\prime}(\lambda)| \{ x\in {\mathbb{R}} : \frac{f^{\star}(x)}{\|f^{\star}\|_{{L}^{\Phi_{1}}}^{lux}} >  \lambda \} |d\lambda \\\\ 	
&=&  \int_{{\mathbb{R}}}\Phi_{1}\left(\frac{f^{\star}(x)}{\|f^{\star}\|_{{L}^{\Phi_{1}}}^{lux}}\right)dx \\\\	
&\leq& 1 .\\\\	
\end{array}$$	
The proof is complete.
\end{proof}
\subsection{Proof of Theorem \ref{thm:main2}}
Let us start with the following key result.
\begin{lem}\label{lem:main21}
Let $\alpha>-1$. Let $\Phi$ be growth function such the function $t\mapsto \tilde{\Phi}(t):=\frac{1}{\Phi\left(\frac 1t\right)}$ belongs to the class $\mathscr{U}$. Assume $\mu$ is a $(\Phi,\alpha)$-Carleson measure. Then for any locally integrable function $f$ on $\mathbb{C}_+$ and any $\lambda>0$,
\Be\label{eq:main21}
\mu\left(\{z\in \mathbb{C}_+: \mathcal{M}_\alpha^d f(z)>\lambda\}\right)\le C\tilde{\Phi}\left(|\{z\in \mathbb{C}_+: \mathcal{M}_\alpha^d f(z)>\lambda\}|_\alpha\right)
\Ee
where $C$ is the constant in (\ref{eq:phicarlbergdef}). Moreover, if $\Phi\in \mathscr{U}$ and satisfies the $\nabla_2$-condition, then the reverse holds. That is if $\mu$ satisfies (\ref{eq:main21}), then $\mu$ is a $(\Phi,\alpha)$-Carleson measure with a constant equivalent to the one in (\ref{eq:main21}).
\end{lem}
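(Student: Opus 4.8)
The plan is to mirror the structure of the proof of Lemma \ref{lem:main11}, replacing the nontangential maximal function and its associated Whitney/Carleson decomposition by the dyadic weighted maximal function $\mathcal M_\alpha^d$ and the decomposition of its level sets into Carleson squares supplied by Lemma \ref{lem:levelsets}(ii). First I would fix $\lambda>0$ and, using Lemma \ref{lem:levelsets}(ii), write
\[
\{z\in\mathbb C_+:\mathcal M_\alpha^d f(z)>\lambda\}=\bigcup_j Q_{I_j},
\]
a disjoint union over a family $\{I_j\}$ of maximal dyadic intervals. Since $\mu$ is a $(\Phi,\alpha)$-Carleson measure, (\ref{eq:phicarlbergdef}) gives $\mu(Q_{I_j})\le C/\Phi\bigl(1/|I_j|^{2+\alpha}\bigr)=C\tilde\Phi\bigl(|I_j|^{2+\alpha}\bigr)$, and since $|Q_{I_j}|_\alpha=\frac1{1+\alpha}|I_j|^{2+\alpha}\thickapprox|I_j|^{2+\alpha}$ (as computed in the proof of Lemma \ref{lem:pointwiseberg}), we get $\mu(Q_{I_j})\lesssim \tilde\Phi\bigl(|Q_{I_j}|_\alpha\bigr)$. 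Summing over $j$, using $\tilde\Phi\in\mathscr U$ (hence superadditive, exactly as in Lemma \ref{lem:main11}) and the disjointness of the $Q_{I_j}$, yields
\[
\mu\bigl(\{\mathcal M_\alpha^d f>\lambda\}\bigr)\le\sum_j\mu(Q_{I_j})\lesssim\sum_j\tilde\Phi\bigl(|Q_{I_j}|_\alpha\bigr)\le\tilde\Phi\Bigl(\sum_j|Q_{I_j}|_\alpha\Bigr)=\tilde\Phi\bigl(|\{\mathcal M_\alpha^d f>\lambda\}|_\alpha\bigr),
\]
which is (\ref{eq:main21}).

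For the converse, I would assume $\Phi\in\mathscr U$ satisfies the $\nabla_2$-condition and that (\ref{eq:main21}) holds, and test it against a multiple of the indicator of a Carleson square. Given an interval $I\subset\mathbb R$ with Carleson square $Q_I$, put $f=c\lambda\chi_{Q_I}$ for a suitable absolute (or $\alpha$-dependent) constant $c$ so that $\mathcal M_\alpha^d f(z)>\lambda$ for every $z\in Q_I$ — one checks this by choosing the dyadic interval containing a given $z$ and comparing averages, using that the $Q_{I_j}$ in Lemma \ref{lem:levelsets}(ii) are the maximal ones on which the $\alpha$-average of $|f|$ exceeds the threshold. Then $Q_I\subset\{\mathcal M_\alpha^d f>\lambda\}$, so by (\ref{eq:main21}),
\[
\mu(Q_I)\le C\,\tilde\Phi\bigl(|\{\mathcal M_\alpha^d f>\lambda\}|_\alpha\bigr)
= C\,\tilde\Phi\bigl(|\{z:\Phi(\mathcal M_\alpha^d f(z))>\Phi(\lambda)\}|_\alpha\bigr).
\]
Applying Chebyshev in the weighted measure $dV_\alpha$, then Proposition \ref{prop:boundedHLmaxberg} (boundedness of $\mathcal M_\alpha$, hence of $\mathcal M_\alpha^d\le\mathcal M_\alpha$, on $L^\Phi(dV_\alpha)$ under $\nabla_2$), then monotonicity of $\tilde\Phi$, gives
\[
\mu(Q_I)\le C\,\tilde\Phi\!\left(\frac{1}{\Phi(\lambda)}\int_{\mathbb C_+}\Phi\bigl(\mathcal M_\alpha^d f(z)\bigr)\,dV_\alpha(z)\right)
\le C\,\tilde\Phi\!\left(\frac{1}{\Phi(\lambda)}\int_{\mathbb C_+}\Phi(|f(z)|)\,dV_\alpha(z)\right).
\]
Since $f=c\lambda\chi_{Q_I}$ and $\Phi(c\lambda)\thickapprox\Phi(\lambda)$ by the $\Delta_2$-condition, the right-hand side is $\lesssim\tilde\Phi(|Q_I|_\alpha)=\tilde\Phi\bigl(\tfrac1{1+\alpha}|I|^{2+\alpha}\bigr)\thickapprox\tilde\Phi\bigl(|I|^{2+\alpha}\bigr)=C/\Phi\bigl(1/|I|^{2+\alpha}\bigr)$, so $\mu$ is a $(\Phi,\alpha)$-Carleson measure with the claimed constant dependence.

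The main obstacle I anticipate is the converse direction, specifically verifying that the test function $f=c\lambda\chi_{Q_I}$ really satisfies $Q_I\subset\{\mathcal M_\alpha^d f>\lambda\}$ with a correct, $\lambda$-independent constant $c$: this needs a careful comparison between $|Q_I|_\alpha$ and $|Q_J|_\alpha$ for the dyadic squares $Q_J$ that contain points of $Q_I$ and engulf a fixed proportion of it, using $|Q_J|_\alpha\thickapprox|J|^{2+\alpha}$ and the nesting properties of the dyadic grid. The rest is bookkeeping: tracking that all implied constants depend only on $\alpha$ and on the $\Delta_2/\nabla_2$ constants of $\Phi$ (through Proposition \ref{prop:boundedHLmaxberg}), and that $\tilde\Phi\in\mathscr U$ is used only for superadditivity and monotonicity, both of which are immediate from $\tilde\Phi$ and $t\mapsto\tilde\Phi(t)/t$ being nondecreasing.
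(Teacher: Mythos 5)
Your forward implication is exactly the paper's argument (decomposition of the level set into disjoint Carleson squares via Lemma \ref{lem:levelsets}(ii), the Carleson bound on each $Q_{I_j}$, and superadditivity of $\tilde\Phi\in\mathscr U$), and it is correct.

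The converse, however, has a genuine gap at the step you yourself flag: there is \emph{no} constant $c$ (independent of $I$ and $\lambda$) for which $Q_I\subset\{z:\mathcal M_\alpha^d(c\lambda\chi_{Q_I})(z)>\lambda\}$ for every interval $I$, because $\mathcal M_\alpha^d$ only sees one dyadic grid. Concretely, take $I=[-\epsilon,1)$ with $\epsilon$ small and $z=-\tfrac{\epsilon}{2}+\tfrac34 i\in Q_I$. Every standard dyadic interval $J$ containing $-\tfrac{\epsilon}{2}$ with $|J|>\tfrac34$ is of the form $[-2^k,0)$, $k\ge 0$, so $Q_J\cap Q_I$ has base of length $\epsilon$ and height at most $1+\epsilon$, whence
\[
\frac{1}{|Q_J|_\alpha}\int_{Q_J}c\lambda\chi_{Q_I}\,dV_\alpha\;\le\;c\lambda\,\frac{\epsilon\,(1+\epsilon)^{1+\alpha}}{2^{k(2+\alpha)}}\;\le\;2^{1+\alpha}\epsilon\,c\lambda,
\]
which is $<\lambda$ as soon as $\epsilon<2^{-1-\alpha}/c$; no larger value is available from smaller dyadic squares since they do not contain $z$. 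So the containment you plan to verify by ``nesting properties of the dyadic grid'' simply fails when $I$ straddles a dyadic boundary point of all scales. The paper's proof circumvents this by first passing through the \emph{continuous} maximal function: with $f=\lambda\chi_{Q_I}$ one trivially has $\mathcal M_\alpha f\ge\lambda$ on $Q_I$ (test with the interval $I$ itself), hence $Q_I\subset\{\mathcal M_\alpha f>\lambda/2\}\subset\{\mathcal M_\alpha^d f>\lambda/136\}$ by Lemma \ref{lem:levelsets}(i); one then applies (\ref{eq:main21}) at the level $\lambda/136$ and continues exactly as in your sketch (Chebyshev in $dV_\alpha$, Proposition \ref{prop:boundedHLmaxberg}, monotonicity of $\tilde\Phi$), absorbing the numerical factor $\Phi(\lambda)/\Phi(\lambda/136)$ via the $\Delta_2$-condition of $\Phi$ and the upper type of $\tilde\Phi$. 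With that substitution your converse goes through; without the dyadic-versus-continuous comparison of Lemma \ref{lem:levelsets}(i) it does not.
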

\begin{proof}
Recall with Lemma \ref{lem:levelsets} that $$E_\lambda:=\{z\in \mathbb{C}_+: \mathcal{M}_\alpha^d f(z)>\lambda\}=\bigcup_jQ_{I_j}$$
where $\{I_j\}_j$ is a family of pairwise disjoint dyadic intervals. It follows easily that 
\Beas
\mu(E_\lambda) &=& \sum_j\mu(Q_{I_j})\le C\sum_j\tilde{\Phi}(|I|^{2+\alpha})\\ &\le& C\tilde{\Phi}\left(\sum_j|I|^{2+\alpha}\right)=C\tilde{\Phi}(E_\lambda).
\Eeas
For the converse, let $I$ be any interval in $\mathbb{R}$ and for $\lambda>0$, put $f(z)=\lambda\chi_{Q_I}(z)$. Then using the first assertion in Lemma \ref{lem:levelsets} and Proposition \ref{prop:boundedHLmaxberg}, we obtain
\Beas
\mu(Q_I) &\le& \mu\left(\{z\in \mathbb{C}_+: \mathcal{M}_\alpha^d f(z)>\lambda\}\right)\\ &\le& C\tilde{\Phi}\left(|\{z\in \mathbb{C}_+: \mathcal{M}_\alpha^d f(z)>\lambda\}|_\alpha\right)\\ &\le& C\tilde{\Phi}\left(\frac{1}{\Phi(\lambda)}\int_{\mathbb{C}_+}\Phi\left(\mathcal{M}_\alpha^d f(z)\right)dV_\alpha(z)\right)\\ &\le& C\tilde{\Phi}\left(\frac{1}{\Phi(\lambda)}\int_{\mathbb{C}_+}\Phi\left(|f(z)|\right)dV_\alpha(z)\right)\\ &=& C\tilde{\Phi}\left(\frac{1}{\Phi(\lambda)}\int_{\mathbb{C}_+}\Phi(\lambda)\chi_{Q_I}(z)dV_\alpha(z)\right)\\ &=& C\tilde{\Phi}(|Q_I|_\alpha)=\frac{C}{\Phi\left(\frac{1}{|I|^{2+\alpha}}\right)}.
\Eeas
The proof is complete.
\end{proof}
Next, we prove Theorem \ref{thm:main2}.
\begin{proof}[Proof of Theorem \ref{thm:main2}]
We note that the equivalence (a)$\Leftrightarrow$(b) is a special case of Theorem \ref{thm:sphi}. That (c)$\Rightarrow$(b) follows by taking as $f$ in (\ref{eq:equivcarlberg3}), the test function given in Lemma \ref{lem:testfunctbergo}. To finish, it suffices to prove that (a)$\Rightarrow$(c).
\vskip .3cm
Let us assume that $\mu$ is a $(\Phi_2\circ\Phi_1^{-1},\alpha)$-Carleson measure. Let $C$ be the constant in (\ref{eq:HLineqberg}). We can assume that $C>1$. Put $$\frac 1{\Phi_3(t)}:=\Phi_2\circ\Phi_1^{-1}\left(\frac 1t\right).$$ For $\lambda>0$, define $$ E_\lambda :=\left\{z\in \mathbb{C}_+:\,\mathcal{M}_\alpha^d\left(\frac{f}{C\|f\|_{A_\alpha^{\Phi_1}}^{lux}}\right)(z)>\lambda\right\}.$$
Then using the first assertion in Lemma \ref{lem:levelsets}, and Lemma \ref{lem:main21},
we obtain
\Beas
L &:=& \int_{\mathbb{C}_+}\Phi_2\left(\frac{|f(z)|}{C\|f\|_{A_\alpha^{\Phi_1}}^{lux}}\right)d\mu(z)\\ &\le& K\int_{\mathbb{C}_+}\Phi_2\left(\mathcal{M}_\alpha\left(\frac{f}{C\|f\|_{A_\alpha^{\Phi_1}}^{lux}}\right)(z)\right)d\mu(z)\\ &=& \int_0^\infty\Phi_2'(\lambda)\mu\left(\{z\in \mathbb{C}_+:\,\mathcal{M}_\alpha\left(\frac{f}{C\|f\|_{A_\alpha^{\Phi_1}}^{lux}}\right)(z)>\lambda\}\right)d\lambda\\
&\le& \int_0^\infty\Phi_2'(\lambda)\mu\left(\{z\in \mathbb{C}_+:\,\mathcal{M}_\alpha^d\left(\frac{f}{C\|f\|_{A_\alpha^{\Phi_1}}^{lux}}\right)(z)>\frac{\lambda}{68}\}\right)d\lambda\\ &\le& K\int_0^\infty\Phi_2'(\lambda)\mu\left(E_\lambda\right)d\lambda\\ &\le& K\int_0^\infty\Phi_2'(\lambda)\Phi_3\left(|E_\lambda|_\alpha\right)d\lambda.
\Eeas
Now we recall that by Lemma \ref{lem:reverseprodphi}, $\Phi_3$ also belongs to the class $\mathscr{U}$ and so the function $t\mapsto \frac{\Phi_3(t)}{t}$ is increasing. We also observe using Proposition \ref{prop:boundedHLmaxberg} that
\Beas
|E_\lambda|_\alpha &=& |\{z\in \mathbb{C}_+:\,\mathcal{M}_\alpha^d\left(\frac{f}{C\|f\|_{A_\alpha^{\Phi_1}}^{lux}}\right)(z)>\lambda\}|_\alpha\\ &\le& \frac{1}{\Phi_1(\lambda)}\int_{\mathbb{C}_+}\Phi_1\left(\mathcal{M}_\alpha^d\left(\frac{f}{C\|f\|_{A_\alpha^{\Phi_1}}^{lux}}\right)(z)\right)dV_\alpha\\ &\le& \frac{C}{\Phi_1(\lambda)}\int_{\mathbb{C}_+}\Phi_1\left(\frac{|f(z)|}{C\|f\|_{A_\alpha^{\Phi_1}}^{lux}}\right)dV_\alpha\\ &\le& \frac{1}{\Phi_1(\lambda)}.
\Eeas
Thus
\Beas
\int_{\mathbb{C}_+}\Phi_2\left(\frac{|f(z)|}{C\|f\|_{A_\alpha^{\Phi_1}}^{lux}}\right)d\mu(z) &\le& K\int_0^\infty\Phi_2'(\lambda)\Phi_3\left(|E_\lambda|_\alpha\right)d\lambda\\ &=& K\int_0^\infty\Phi_2'(\lambda)\frac{\Phi_3\left(|E_\lambda|_\alpha\right)}{|E_\lambda|_\alpha}|E_\lambda|_\alpha d\lambda\\ &\le& K\int_0^\infty\Phi_2'(\lambda)\Phi_1(\lambda)\Phi_3\left(\frac{1}{\Phi_1(\lambda)}\right)|E_\lambda|_\alpha d\lambda\\ &=& K\int_0^\infty\Phi_2'(\lambda)\frac{\Phi_1(\lambda)}{\Phi_2(\lambda)}|E_\lambda|_\alpha d\lambda\\ &\le& K\int_0^\infty\Phi_1'(\lambda)|E_\lambda|_\alpha d\lambda\\ &=& K\int_{\mathbb{C}_+}\Phi_1\left(\mathcal{M}_\alpha^d\left(\frac{f}{C\|f\|_{A_\alpha^{\Phi_1}}^{lux}}\right)(z)\right)dV_\alpha\\ &\le& CK\int_{\mathbb{C}_+}\Phi_1\left(\frac{|f(z)|}{C\|f\|_{A_\alpha^{\Phi_1}}^{lux}}\right)dV_\alpha\\ &\le& K.
\Eeas
The proof is complete.
\end{proof}
\section{{Embedding of Hardy-Orlicz spaces and Bergman-Orlicz spaces into Bergman-Orlicz spaces}}
In this part, we are interested in the conditions under which a Hardy-Orlicz space or Bergman-Orlicz space embeds continuously into another Bergman-Orlicz space.
\begin{proof}[Proof of Theorem \ref{thm:embed1}]
We start by recalling that if $I\subset {\mathbb{R}}$ is an interval and $Q_{I}$ its associated Carleson square, then	
\[      V_{\alpha}(Q_{I})= \frac{1}{1+\alpha}|I|^{\alpha+2}.       \]	
Now assume that ${H}^{\Phi_{1}}({\mathbb{C_{+}}})$ embeds continuously into  ${A}^{\Phi_{2}}_{\alpha}({\mathbb{C_{+}}})$. That is
there is a constant $C>0$ such that  for any $ f \in {H}^{\Phi_{1}}({\mathbb{C_{+}}}), f\not=0$,  \[ \int_{{\mathbb{C}}_{+}}\Phi_{2}\left( \dfrac{|f(z)|}{C\|f\|_{{H}^{\Phi_{1}}}^{lux}}\right)dV_{\alpha}(z) \leq 1.             \]	
As $\Phi_{1}, \Phi_{2}\in \mathcal{U}$, $\Phi_{1}$ satisfies the $\nabla_2$-condition and $\dfrac{\Phi_{2}}{\Phi_{1}}$ is nondecreasing, by Theorem \ref{thm:main1}, $V_{\alpha}$ is a $\Phi_{2} \circ \Phi_{1}^{-1}$-Carleson measure. 
\vskip .1cm	
For $t > 0$, let $I\subset {\mathbb{R}}$ be an interval such that $|I|=\dfrac{1}{t}$ and let	 $Q_{I}$ be the Carleson square associated to $I$. Then as $V_\alpha$ is a $\Phi_{2} \circ \Phi_{1}^{-1}$-Carleson measure, we obtain in particular that for some $C_1$ independent of $I$, 
 
$$\begin{array}{rcl}
V_{\alpha}(Q_{I}) \leq \dfrac{C_{1}}{\Phi_{2} \circ \Phi_{1}^{-1}(\frac{1}{|I|})} 
&\Leftrightarrow&   \frac{1}{1+\alpha}|I|^{\alpha+2} \leq \dfrac{C_{1}}{\Phi_{2} \circ \Phi_{1}^{-1}(\frac{1}{|I|})} \\\\	
&\Rightarrow&  \Phi_{2} \circ \Phi_{1}^{-1}(\frac{1}{|I|}) \leq \tilde{C}	\dfrac{1}{|I|^{\alpha+2}}. \\\\
\end{array}$$	
That is $$\Phi_{2} \circ \Phi_{1}^{-1}(t)	\leq \tilde{C}t^{\alpha+2}$$ or equivalently, $$\Phi_{1}^{-1}(t) \leq \Phi_{2}^{-1}( \tilde{C}t^{\alpha+2}).$$
\vskip .2cm
Conversely,
assume that there exists a constant $ C>0$ such that for any $ t > 0$ ,  $$\Phi_{1}^{-1}(t) \leq \Phi_{2}^{-1}(Ct^{2+\alpha}). $$ 
Let $I\subset {\mathbb{R}}$ be an interval and $Q_{I}$ its associted Carleson square. Then

$$\begin{array}{rcl}
\Phi_{1}^{-1}\left( \frac{1}{|I|} \right) \leq \Phi_{2}^{-1}\left(C\frac{1}{|I|^{\alpha+2}} \right)  	&\Leftrightarrow&  \Phi_{2} \circ \Phi_{1}^{-1}\left(\frac{1}{|I|}\right) \leq C\dfrac{1}{|I|^{\alpha+2}} \\\\
&\Leftrightarrow&  \Phi_{2} \circ \Phi_{1}^{-1}\left(\frac{1}{|I|}\right) \leq C\dfrac{1}{(\alpha+1)V_{\alpha}(Q_{I})} \\\\	
&\Leftrightarrow& V_{\alpha}(Q_{I}) \leq \dfrac{C_{1}}{\Phi_{2} \circ \Phi_{1}^{-1}\left(\frac{1}{|I|}\right)}. \\	
\end{array}$$	
That is $V_{\alpha}$ is a $\Phi_{2} \circ \Phi_{1}^{-1}$-Carleson measure. Thus by Theorem \ref{thm:main1}, there exists a constant $K>0$ such for any  $\ f \in {H}^{\Phi_{1}}({\mathbb{C_{+}}}),~ f\not=0$,  \[ \int_{{\mathbb{C}}_{+}}\Phi_{2}\left( \dfrac{|f(z)|}{K\|f\|_{{H}^{\Phi_{1}}}^{lux}}\right)dV_{\alpha}(z) < \infty,          \]
that is  ${H}^{\Phi_{1}}({\mathbb{C_{+}}})$ embeds continuously into  ${A}^{\Phi_{2}}_{\alpha}({\mathbb{C_{+}}})$. The proof is complete.

\end{proof}
\begin{proof}[Proof of Theorem \ref{thm:embed2}]
This essentially follows as above. We leave it to the interested reader.
\end{proof}
\section{Pointwise multipliers characterizations}
We start with the following lemma.
\begin{lem}\label{lem:main31}
Let $\Phi_1,\Phi_2\in {\mathscr U}$. Assume that  $\frac{\Phi_2}{\Phi_1}$ is non-decreasing. Let $\alpha>-1$ and define for $t\in (0,\infty)$, the function
$$\omega(t)=\frac{\Phi_2^{-1}\left(\frac{1}{t^{2+\alpha}}\right)}{\Phi_1^{-1}\left(\frac{1}{t}\right)}.$$
Then the following assertions hold.
\begin{itemize}
\item[(i)] If $\Phi_1$ satisfies the $\nabla_2$-condition, and $\omega$ is equivalent to $1$, then $$\mathcal{M}\left(H^{\Phi_1}(\mathbb{C}_+),A_\alpha^{\Phi_2}(\mathbb{C}_+)\right)=H^\infty(\mathbb{C}_+).$$
\item[(ii)] If $\omega$ is nondecreasing $\lim_{t\rightarrow 0}\omega(t)=0$, then $$\mathcal{M}\left(H^{\Phi_1}(\mathbb{C}_+),A_\alpha^{\Phi_2}(\mathbb{C}_+)\right)=\{0\}.$$
\end{itemize}
\end{lem}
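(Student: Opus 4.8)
The plan is to reduce both parts to one clean general fact: every multiplier $g\in\mathcal M\left(H^{\Phi_1}(\mathbb C_+),A_\alpha^{\Phi_2}(\mathbb C_+)\right)$ automatically lies in $\mathcal H_\omega^\infty(\mathbb C_+)$, with no extra assumption on $\omega$ and no $\nabla_2$-condition. To prove this I will test the multiplier on the functions $f_{z_0}$ produced by Lemma~\ref{lem:testfuncthardyo}: if $g$ satisfies $\|fg\|_{A_\alpha^{\Phi_2}}^{lux}\le C_0\|f\|_{H^{\Phi_1}}^{lux}$ for all $f$, then since $\|f_{z_0}\|_{H^{\Phi_1}}^{lux}\le\pi$ we get $\|f_{z_0}g\|_{A_\alpha^{\Phi_2}}^{lux}\le\pi C_0$, and feeding this into the pointwise bound of Lemma~\ref{lem:pointwiseberg} evaluated at $z_0$ (where $|f_{z_0}(z_0)|=\frac14\Phi_1^{-1}(1/y_0)$) yields
\[
\frac14\,\Phi_1^{-1}\!\left(\frac1{y_0}\right)|g(z_0)|\le C_\alpha\,\Phi_2^{-1}\!\left(\frac1{y_0^{2+\alpha}}\right)\pi C_0,
\]
i.e. $|g(x+iy)|\le C\,\omega(y)$ throughout $\mathbb C_+$. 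So $g\in\mathcal H_\omega^\infty(\mathbb C_+)$, with norm controlled by the multiplier norm.

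For part (i): when $\omega\approx1$ one has $\mathcal H_\omega^\infty(\mathbb C_+)=H^\infty(\mathbb C_+)$, so the general fact already gives $\mathcal M\subseteq H^\infty(\mathbb C_+)$. For the reverse inclusion I will note that the lower bound $\omega\gtrsim1$ says precisely $\Phi_1^{-1}(t)\le c\,\Phi_2^{-1}(t^{2+\alpha})$ for all $t>0$; since $\Phi_2^{-1}\in\mathscr L$ (it has lower type $1/q$) I can absorb the constant $c$ into the argument of $\Phi_2^{-1}$ to recover condition (\ref{eq:embedhardybergcond}), so Theorem~\ref{thm:embed1}, applicable because $\Phi_1$ satisfies the $\nabla_2$-condition, gives the continuous embedding $H^{\Phi_1}(\mathbb C_+)\hookrightarrow A_\alpha^{\Phi_2}(\mathbb C_+)$. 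Since multiplication by $g\in H^\infty(\mathbb C_+)$ maps $H^{\Phi_1}(\mathbb C_+)$ boundedly into itself (because $|fg(\cdot+iy)|\le\|g\|_\infty|f(\cdot+iy)|$ and $\Phi_1$ is nondecreasing, so $\|fg\|_{H^{\Phi_1}}^{lux}\le\|g\|_\infty\|f\|_{H^{\Phi_1}}^{lux}$), composing these two bounded maps shows $H^\infty(\mathbb C_+)\subseteq\mathcal M$.

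For part (ii): the inclusion $\{0\}\subseteq\mathcal M$ is trivial; for the other, the general fact gives $|g(x+iy)|\le C\omega(y)$ for any multiplier $g$, and since $\omega$ is nondecreasing with $\lim_{t\to0}\omega(t)=0$ and $\Im z\le|z-x_0|$ for $x_0\in\mathbb R$, this forces $g(z)\to0$ as $\mathbb C_+\ni z\to x_0$ for every $x_0\in\mathbb R$; hence $g$ extends continuously to $\overline{\mathbb C_+}$ with boundary value $0$ on $\mathbb R$. The Schwarz reflection principle then upgrades $g$ (extended by $\overline{g(\bar z)}$ on $\{\Im z\le0\}$) to an entire function vanishing on $\mathbb R$, which must be identically zero; thus $\mathcal M=\{0\}$.

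The computational heart of the argument is the test-function estimate in the first paragraph, where Lemmas~\ref{lem:testfuncthardyo} and~\ref{lem:pointwiseberg} are combined; the rest is bookkeeping. The one spot demanding a little care is, in part (i), translating the qualitative bound $\omega\gtrsim1$ into the exact inequality (\ref{eq:embedhardybergcond}) required by Theorem~\ref{thm:embed1}, which relies on the lower-type (i.e. $\mathscr L$) property of $\Phi_2^{-1}$, as already exploited in Lemma~\ref{lem:reverseprodphi}. Part (ii) is then immediate from the reflection principle.
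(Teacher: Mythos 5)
Your proposal is correct and follows essentially the same route as the paper: the key pointwise bound $|g(x+iy)|\lesssim \omega(y)$ obtained by feeding the test functions $f_{z_0}$ of Lemma \ref{lem:testfuncthardyo} into Lemma \ref{lem:pointwiseberg}, together with Theorem \ref{thm:embed1} (after absorbing the constant using the upper type of $\Phi_2$) for the inclusion $H^\infty(\mathbb{C}_+)\subseteq \mathcal{M}$ in part (i). The only substantive difference is in part (ii): where the paper simply lets $y\rightarrow 0$ in $|g(z)|\le C\omega(y)$ and concludes $g\equiv 0$, you justify this by extending $g$ continuously by $0$ to $\mathbb{R}$ and invoking Schwarz reflection plus the identity theorem, which is a careful completion of a step the paper leaves implicit.
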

\begin{proof}
(i)	 Assume that $\omega$ is equivalent to $1$. Then for every
$t  >0$, 	
$$\begin{array}{rcl}
\omega(\frac{1}{t})	\approx 1 &\Rightarrow&  \Phi_{1}^{-1}(t) \approx \Phi_{2}^{-1}(t^{2+\alpha}).
\end{array}$$ 
This means in particular that there exists a constant $C>0$ such that for every $t>0$, $$\Phi_{1}^{-1}(t) \leq \Phi_{2}^{-1}(Ct^{2+\alpha}).$$ 
%\end{array}$$	
As $\Phi_{1}, \Phi_{2}\in \mathcal{U}$, $\Phi_{1}$ satisfies the $\nabla_2$-condition and $\dfrac{\Phi_{2}}{\Phi_{1}}$  nondecreasing, we have by Theorem \ref{thm:embed1}, that ${H}^{\Phi_{1}}({\mathbb{C_{+}}})$ embeds continuously into  ${A}^{\Phi_{2}}_{\alpha}({\mathbb{C_{+}}})$. Thus there is a constant $C>0$ such that $\forall~ f \in {H}^{\Phi_{1}}({\mathbb{C_{+}}}), ~f\not=0$,  \Be\label{eq:hardbergembpro} \int_{{\mathbb{C}}_{+}}\Phi_{2}\left( \dfrac{|f(z)|}{C\|f\|_{{H}^{\Phi_{1}}}^{lux}}\right)dV_{\alpha}(z) \leq 1.\Ee
Let us now prove that $\mathcal{M}( {H}^{\Phi_{1}}({\mathbb{C_{+}}}) ,  {A}^{\Phi_{2}}_{\alpha}({\mathbb{C_{+}}})  ) = {H}^{\infty}({\mathbb{C_{+}}})$. \\
Let $g \in 	{H}^{\infty}({\mathbb{C_{+}}})$ and let	$ f \in {H}^{\Phi_{1}}({\mathbb{C_{+}}}),~ f\not=0$. If $g=0$, then there is nothing to prove. Let us then assume that $g\not=0$. Using (\ref{eq:hardbergembpro}), we obtain 
\[ \int_{{\mathbb{C}}_{+}}\Phi_{2}\left( \dfrac{|g(z)f(z)|}{C\|g\|_{\infty}\|f\|_{{H}^{\Phi_{1}}}^{lux}}\right)dV_{\alpha}(z)\leq \int_{{\mathbb{C}}_{+}}\Phi_{2}\left( \dfrac{|f(z)|}{C\|f\|_{{H}^{\Phi_{1}}}^{lux}}\right)dV_{\alpha}(z) \leq 1 .           \]
Thus \[    \|fg\|_{\Phi_{2},\alpha }^{lux} \leq C\|g\|_{\infty}\|f\|_{{H}^{\Phi_{1}}}^{lux} \leq K\|f\|_{{H}^{\Phi_{1}}}^{lux}. \]
It follows that $g\in \mathcal{M}( {H}^{\Phi_{1}}({\mathbb{C_{+}}}) ,  {A}^{\Phi_{2}}_{\alpha}({\mathbb{C_{+}}})  )$ whenever $g\in {H}^{\infty}({\mathbb{C_{+}}})$.
\vskip .2cm
Let us now prove the converse. Let $g \in \mathcal{M}( {H}^{\Phi_{1}}({\mathbb{C_{+}}}) ,  {A}^{\Phi_{2}}_{\alpha}({\mathbb{C_{+}}})  )$. Then there is a constant $C>0$ such that for any $f \in {H}^{\Phi_{1}}({\mathbb{C_{+}}})$, 
%\[  \int_{{\mathbb{C}}_{+}}\Phi_{2}\left( \dfrac{|g(z)f(z)|}{C\|f\|_{{H}^{\Phi_{1}}}^{lux}}\right)dV_{\alpha}(z) \leq 1. \]
%Ainsi, on d\'eduit que $\forall~ f \in \textbf{H}^{\Phi_{1}}(\pmb{\mathbb{C_{+}}}),~ f\not=0$, on a : $fg \in \textbf{A}^{\Phi_{2}}_{\alpha}(\pmb{\mathbb{C_{+}}})$ et de plus 
\[  \|fg\|_{\Phi_{2},\alpha}^{lux} \leq  C\|f\|_{{H}^{\Phi_{1}}}^{lux}.    \]     \\
It follows from this and Lemma \ref{lem:pointwiseberg} that there is a constant $K>0$ such that for any $z=x+iy\in \mathbb{C}_+$, \Be\label{eq:pointwisetotest}   	| f(z)g(z)| \leq K\Phi_{2}^{-1}\left(\frac{1}{y^{2+\alpha}} \right) \|fg\|_{\Phi_{2},\alpha}^{lux}  \leq  KC\Phi_{2}^{-1}\left(\frac{1}{y^{2+\alpha}} \right)\|f\|_{{H}^{\Phi_{1}}}^{lux}. \Ee
Fix $z_0=x_0+iy_0\in \mathbb{C}_+$ and consider the function $f_{z_{0}}$ defined \[ f_{z_o}(\omega)= \Phi^{-1}_{1}\left(\dfrac{1}{y_{0}}\right) \dfrac{y^{2}_{0}}{ (\omega-\overline{z_{0}})^{2}} ,~ \forall~ \omega \in {\mathbb{C_{+}}}.          \]	
We recall with Lemma \ref{lem:testfuncthardyo} that 	$f_{z_0}\in {H}^{\Phi_{1}}({\mathbb{C_{+}}})$ with $\|f_{z_{0}}\|_{{H}^{\Phi_{1}}}^{lux} \leq \pi$. Replacing $f$ by $f_{z_0}$ in (\ref{eq:pointwisetotest}), we obtain that for any $z=x+iy\in \mathbb{C}_+$,
$$\Phi^{-1}_{1}\left(\dfrac{1}{y_{0}}\right) \dfrac{y^{2}_{0}}{ |z-\overline{z_{0}}|^{2}}|g(z)|\leq C\Phi_{2}^{-1}\left(\frac{1}{y^{2+\alpha}}\right )\pi$$
and the constant does not depend on $z$. As this happens for any $z=x+iy\in \mathbb{C}_+$, taking in particular $z=z_0$, we obtain
\[         |g(z_{0})|\leq 4\pi C\frac{\Phi_{2}^{-1}\left(\frac{1}{y_{0}^{2+\alpha}}\right)}{\Phi^{-1}\left(\frac{1}{y_{0}}\right)}= 4\pi C\omega(y_{0})\approx 4\pi C.  \]
Thus \[      |g(z)|\leq 4\pi C \,\,\,\textrm{for any}\,\,\,z=x+iy\in \mathbb{C}_+. \]
% $\|g\|_{\infty} \preceq C_{1} < \infty$ \\
Hence $g \in {H}^{\infty}({\mathbb{C_{+}}})$.
\vskip .3cm
 (ii) Suppose that the function $\omega$  is nondecreasing and $\lim_{t \to 0}\omega(t)=0$.
Let $g$ be a multiplier from $ {H}^{\Phi_{1}}({\mathbb{C_{+}}})$ to ${A}^{\Phi_{2}}_{\alpha}({\mathbb{C_{+}}})$. We obtain as above that there is a constant $C>0$ such that for any $z=x+iy\in {\mathbb{C_{+}}}$,          
\Be\label{eq:pointwisegene}  |g(z)|\leq 4\pi C\omega(y_). \Ee
%As $\omega$  is nondecreasing on $(0 , \infty)$, it follows that \[    |g(z)|\le 4\pi C\omega(t) ,~~ \forall~ 0 < t < y. \]
%Letting $t\rightarrow 0$, we obtain that $g(z)=0$ for any  $z=x+iy\in {\mathbb{C_{+}}}$.
Letting $y\rightarrow 0$, we obtain from our hypothesis on $\omega$ that the right hand side of (\ref{eq:pointwisegene}) goes to $0$. Thus $g(z)=0$ for all $z\in \mathbb{C}_+$.
Hence $$\mathcal{M}( {H}^{\Phi_{1}}({\mathbb{C_{+}}}) , {A}^{\Phi_{2}}_{\alpha}({\mathbb{C_{+}}})  ) = \{ 0 \}.$$ The proof is complete.	

\end{proof}
We next prove the following.
\begin{lem}\label{lem:main32}
Let $\Phi_1\in \mathscr U$ and $\Phi_2\in \tilde{\mathscr U}$. Assume that $\Phi_1$ and $\Phi_2\circ\Phi_1^{-1}$ satisfy the $\nabla_2$-condtion and that $\frac{\Phi_2}{\Phi_1}$ is non-decreasing. Let $\alpha>-1$ and define for $t\in (0,\infty)$, the function
$$\omega(t)=\frac{\Phi_2^{-1}\left(\frac{1}{t^{2+\alpha}}\right)}{\Phi_1^{-1}\left(\frac{1}{t}\right)}.$$
 If $\omega$ is non-increasing on $(0,\infty)$, then $$\mathcal{M}\left(H^{\Phi_1}(\mathbb{C}_+),A_\alpha^{\Phi_2}(\mathbb{C}_+)\right)=H_\omega^\infty(\mathbb{C}_+).$$
\end{lem}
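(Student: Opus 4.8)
The plan is to prove $\mathcal{M}(H^{\Phi_1}(\mathbb{C}_+),A_\alpha^{\Phi_2}(\mathbb{C}_+))=H_\omega^\infty(\mathbb{C}_+)$ by the two inclusions. The inclusion $\mathcal{M}(H^{\Phi_1}(\mathbb{C}_+),A_\alpha^{\Phi_2}(\mathbb{C}_+))\subseteq H_\omega^\infty(\mathbb{C}_+)$ is the soft one and goes exactly as in Lemma~\ref{lem:main31}: if $g$ is a multiplier, applying Lemma~\ref{lem:pointwiseberg} to $fg\in A_\alpha^{\Phi_2}(\mathbb{C}_+)$ gives $|f(z)g(z)|\lesssim\Phi_2^{-1}(y^{-(2+\alpha)})\,\|f\|_{H^{\Phi_1}}^{lux}$ for $z=x+iy$; then, for fixed $z_0=x_0+iy_0$, one inserts the Hardy-Orlicz test function $f_{z_0}$ of Lemma~\ref{lem:testfuncthardyo} (for which $\|f_{z_0}\|_{H^{\Phi_1}}^{lux}\le\pi$ and $|f_{z_0}(z_0)|=\tfrac14\Phi_1^{-1}(1/y_0)$) and evaluates at $z=z_0$, obtaining $|g(z_0)|\lesssim\Phi_2^{-1}(y_0^{-(2+\alpha)})/\Phi_1^{-1}(1/y_0)=\omega(y_0)$, i.e. $g\in H_\omega^\infty(\mathbb{C}_+)$. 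This half uses neither the $\nabla_2$-conditions nor the monotonicity of $\omega$.

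For the reverse inclusion $H_\omega^\infty(\mathbb{C}_+)\subseteq\mathcal{M}(H^{\Phi_1}(\mathbb{C}_+),A_\alpha^{\Phi_2}(\mathbb{C}_+))$, fix $g\in H_\omega^\infty(\mathbb{C}_+)$, $g\neq0$, and $f\in H^{\Phi_1}(\mathbb{C}_+)$, $f\neq0$. Since $|g(z)|\le\|g\|_{H_\omega^\infty}\omega(y)$ for $z=x+iy$, the near-multiplicativity (\ref{eq:uppertypecondmulti1}) of $\Phi_2\in\tilde{\mathscr U}$ gives, for every $\lambda>0$,
\[
\Phi_2\!\left(\frac{|f(z)g(z)|}{\lambda\,\|g\|_{H_\omega^\infty}}\right)\le C_1\,\Phi_2\big(\omega(y)\big)\,\Phi_2\!\left(\frac{|f(z)|}{\lambda}\right),\qquad z=x+iy.
\]
Integrating against $dV_\alpha$, the whole problem reduces to showing that the positive measure $d\mu(x+iy):=\Phi_2(\omega(y))\,dV_\alpha(x+iy)$ — which does not depend on $f$ or $g$ — is a $\Phi_2\circ\Phi_1^{-1}$-Carleson measure: Theorem~\ref{thm:main1} then supplies a constant $K$ depending only on $\mu$ with $\int_{\mathbb{C}_+}\Phi_2(|f|/(K\|f\|_{H^{\Phi_1}}^{lux}))\,d\mu\le1$, and enlarging $K$ to absorb $C_1$ (using that $t\mapsto\Phi_2(t)/t$ is nondecreasing, so $\Phi_2(t/C_1)\le C_1^{-1}\Phi_2(t)$) yields $\|fg\|_{\Phi_2,\alpha}^{lux}\lesssim\|g\|_{H_\omega^\infty}\,\|f\|_{H^{\Phi_1}}^{lux}$, i.e. $g\in\mathcal{M}$.

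Thus the crux is the Carleson estimate for $\mu$, and it is here that the remaining hypotheses are used. Write $a(y)=\Phi_2^{-1}(y^{-(2+\alpha)})$ and $b(y)=\Phi_1^{-1}(1/y)$, so $\omega(y)=a(y)/b(y)$, $\Phi_2(a(y))=y^{-(2+\alpha)}$, $\Phi_2(b(y))=\Phi_2\circ\Phi_1^{-1}(1/y)$ and $y^\alpha=1/(y^2\Phi_2(a(y)))$. Integrating in $x$ over $I$, the bound $\mu(Q_I)\lesssim1/\Phi_2\circ\Phi_1^{-1}(1/|I|)$ becomes $|I|\int_0^{|I|}\Phi_2(\omega(y))\,y^\alpha\,dy\lesssim1/\Phi_2\circ\Phi_1^{-1}(1/|I|)$, which I would establish by combining a pointwise estimate with a Dini estimate. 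For the pointwise estimate one needs
\[
\Phi_2(\omega(y))\,y^\alpha=\frac{\Phi_2\!\big(a(y)/b(y)\big)}{y^2\,\Phi_2(a(y))}\lesssim\frac{1}{y^2\,\Phi_2\circ\Phi_1^{-1}(1/y)}\qquad(y>0),
\]
i.e. $\Phi_2(a/b)\lesssim\Phi_2(a)/\Phi_2(b)$ along the curve $(a(y),b(y))$: when $a(y),b(y)\ge1$ this follows from (\ref{eq:uppertypecondmulti2}) together with the upper type of $\Phi_2$ (which bounds $b^q$ below by a multiple of $\Phi_2(b)$); when $a(y),b(y)\le1$ it follows from (\ref{eq:uppertypecondmulti3}); and on the bounded transition range, where $\min(a,b)<1<\max(a,b)$ — a compact $y$-interval, the two crossing levels being finite and positive — it is immediate by continuity and positivity. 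The monotonicity of $\omega$ serves here to keep $\Phi_2(\omega(y))$ under control on the part of $\{y:b(y)\le1\}$ that is not covered directly by (\ref{eq:uppertypecondmulti3}). For the Dini estimate, $\int_0^t\frac{dy}{y^2\,\Phi_2\circ\Phi_1^{-1}(1/y)}\lesssim\frac{1}{t\,\Phi_2\circ\Phi_1^{-1}(1/t)}$ is item~(b) of Lemma~\ref{lem:dini} applied to $\tilde{\Phi}(t):=1/\Phi_2\circ\Phi_1^{-1}(1/t)$, which lies in $\mathscr U$ by Lemma~\ref{lem:reverseprodphi} and satisfies the $\nabla_2$-condition because its lower index equals that of $\Phi_2\circ\Phi_1^{-1}$ — this is precisely where the hypothesis ``$\Phi_2\circ\Phi_1^{-1}$ satisfies $\nabla_2$'' enters. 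Together these give $\mu(Q_I)\lesssim|I|\cdot\tilde{\Phi}(|I|)/|I|=1/\Phi_2\circ\Phi_1^{-1}(1/|I|)$, completing the verification. The points I anticipate needing the most care are the bookkeeping over the transition range and keeping every constant independent of $f$ and $g$ so that a single $K$ works in Theorem~\ref{thm:main1}; the $\nabla_2$-hypothesis on $\Phi_1$ is used only through that theorem.
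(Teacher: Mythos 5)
Your proposal is correct and is essentially the paper's own argument: the easy inclusion via Lemma \ref{lem:pointwiseberg} and the test functions of Lemma \ref{lem:testfuncthardyo}, and the hard inclusion via the sub-multiplicativity (\ref{eq:uppertypecondmulti1}), the pointwise bound $\Phi_2(\omega(y))\,y^\alpha\lesssim \bigl(y^2\,\Phi_2\circ\Phi_1^{-1}(1/y)\bigr)^{-1}$ obtained from (\ref{eq:uppertypecondmulti2})--(\ref{eq:uppertypecondmulti3}) and the upper type of $\Phi_2$, the Carleson property of the resulting measure through the lower index $>1$ of $\Phi_2\circ\Phi_1^{-1}$ (its $\nabla_2$-condition), and finally the Carleson embedding Theorem \ref{thm:main1}. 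The only differences are organizational: the paper first dominates $\Phi_2(\omega(\Im z))\,dV_\alpha$ by $dV/\bigl(y^2\Phi_2\circ\Phi_1^{-1}(1/y)\bigr)$ by splitting at $\Im z=1$ and then verifies the Carleson condition for that fixed measure by a dyadic sum, whereas you verify it for $\Phi_2(\omega(y))\,dV_\alpha$ directly, splitting according to the sizes of $\Phi_2^{-1}(y^{-(2+\alpha)})$ and $\Phi_1^{-1}(1/y)$ relative to $1$ and invoking Lemma \ref{lem:dini} for the Dini integral --- the same mathematics, with your handling of the transition and $a>b$ ranges being, if anything, slightly more careful than the paper's.
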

\begin{proof}
That if $g\in \mathcal{M}\left(H^{\Phi_1}(\mathbb{C}_+),A_\alpha^{\Phi_2}(\mathbb{C}_+)\right)$, then $g\in H_\omega^\infty(\mathbb{C}_+)$, follows from (\ref{eq:pointwisegene}). Let us then prove the converse.
\vskip .2cm 
Let $K=\max\{1,2C_1C_2,2C_1C_3\}$ where $C_1$, $C_2$ and $C_3$ are respectively the constants in conditions (\ref{eq:uppertypecondmulti1}), (\ref{eq:uppertypecondmulti2}) and (\ref{eq:uppertypecondmulti3}) in the definition of the class $\tilde{\mathscr U}$.
Using the property (\ref{eq:uppertypecondmulti1}), we first obtain for $C>0$ a constant whose existence has to be proved, 
\Beas
L &:=& \int_{\mathbb{C}_+}\Phi_2\left(\frac{|g(z)||f(z)|}{KC\|g\|_{H_\omega^\infty}\|f\|_{H^{\Phi_1}}^{lux}}\right)dV_\alpha(z)\\ &\leq& \int_{\mathbb{C}_+}\Phi_2\left(\frac{\Phi_2^{-1}\left(\frac{1}{(\Im z)^{2+\alpha}}\right)}{\Phi_1^{-1}\left(\frac{1}{\Im z}\right)}\frac{|f(z)|}{KC\|f\|_{H^{\Phi_1}}^{lux}}\right)dV_\alpha(z)\\ &\leq& C_1\int_{\mathbb{C}_+}\Phi_2\left(\frac{\Phi_2^{-1}\left(\frac{1}{(\Im z)^{2+\alpha}}\right)}{\Phi_1^{-1}\left(\frac{1}{\Im z}\right)}\right)\Phi_2\left(\frac{|f(z)|}{KC\|f\|_{H^{\Phi_1}}^{lux}}\right)dV_\alpha(z)\\ &=& L_1+L_2
\Eeas
where
$$L_1:=C_1\int_{\mathbb{C}_+}\Phi_2\left(\frac{\Phi_2^{-1}\left(\frac{1}{(\Im z)^{2+\alpha}}\right)}{\Phi_1^{-1}\left(\frac{1}{\Im z}\right)}\right)\Phi_2\left(\frac{|f(z)|}{KC\|f\|_{H^{\Phi_1}}^{lux}}\right)\chi_{\{\Im z>1\}}(z)dV_\alpha(z)$$
and 
$$L_2:=C_1\int_{\mathbb{C}_+}\Phi_2\left(\frac{\Phi_2^{-1}\left(\frac{1}{(\Im z)^{2+\alpha}}\right)}{\Phi_1^{-1}\left(\frac{1}{\Im z}\right)}\right)\Phi_2\left(\frac{|f(z)|}{KC\|f\|_{H^{\Phi_1}}^{lux}}\right)\chi_{\{\Im z\le 1\}}(z)dV_\alpha(z).$$
We observe that as the function $\omega$ is nonincreasing, we have that $$\Phi_2^{-1}\left(\frac{1}{t^{2+\alpha}}\right)\le \Phi_1^{-1}\left(\frac{1}{t}\right)\,\,\,\textrm{for any}\,\,\,t\ge 1.$$
Hence using (\ref{eq:uppertypecondmulti3}) and the definition of the constant $K$, we obtain
\Beas L_1 &\leq& C_1C_3\int_{\mathbb{C}_+}\frac{1}{(\Im z)^{2+\alpha}\Phi_2\circ\Phi_1^{-1}\left(\frac{1}{\Im z}\right)}\Phi_2\left(\frac{|f(z)|}{KC\|f\|_{H^{\Phi_1}}^{lux}}\right)dV_\alpha(z)\\ &\leq& \frac{1}{2}\int_{\mathbb{C}_+}\frac{1}{(\Im z)^{2+\alpha}\Phi_2\circ\Phi_1^{-1}\left(\frac{1}{\Im z}\right)}\Phi_2\left(\frac{|f(z)|}{C\|f\|_{H^{\Phi_1}}^{lux}}\right)dV_\alpha(z).
\Eeas
Now let $q\geq 1$ be the upper-type of $\Phi_2$. Using (\ref{eq:uppertypecondmulti2}), we obtain
\Beas L_2 &\leq& C_1C_3\int_{\mathbb{C}_+}\frac{1}{(\Im z)^{2+\alpha}\left(\Phi_1^{-1}\left(\frac{1}{\Im z}\right)\right)^q}\Phi_2\left(\frac{|f(z)|}{KC\|f\|_{H^{\Phi_1}}^{lux}}\right)\chi_{\{\Im z\le 1\}}(z)dV_\alpha(z)\\ &\leq& \frac{1}{2}\int_{\mathbb{C}_+}\frac{1}{(\Im z)^{2+\alpha}\Phi_2\circ\Phi_1^{-1}\left(\frac{1}{\Im z}\right)}\Phi_2\left(\frac{|f(z)|}{C\|f\|_{H^{\Phi_1}}^{lux}}\right)dV_\alpha(z).
\Eeas
It follows that
$$L\le \int_{\mathbb{C}_+}\frac{1}{(\Im z)^{2+\alpha}\Phi_2\circ\Phi_1^{-1}\left(\frac{1}{\Im z}\right)}\Phi_2\left(\frac{|f(z)|}{C\|f\|_{H^{\Phi_1}}^{lux}}\right)dV_\alpha(z).$$
Hence to conclude, we only have to prove the existence of a constant $C>0$ such that 
$$\int_{\mathbb{C}_+}\Phi_2\left(\frac{|f(z)|}{C\|f\|_{\Phi_1,\alpha}^{lux}}\right)d\mu(z)\leq 1$$ where $$d\mu(x+iy)=\frac{dV(x+iy)}{y^2\Phi_2\circ\Phi_1^{-1}\left(\frac{1}{y}\right)}.$$ By  Theorem \ref{thm:main2}, it is enough to prove that $\mu$ is a $\Phi_2\circ \Phi_1^{-1}$-Carleson measure.
\vskip .2cm
Let $I\subset \mathbb{R}$ be a fixed interval. Let $s$ be the lower indice of $\Phi_2\circ\Phi_1^{-1}$. From the comments at the beginning of the Subsection 3.1, we have that $s>1$. Using that the function $t\mapsto \frac{\Phi_2\circ\Phi_1^{-1}(t)}{t^s}$ is increasing, we obtain that
\Beas
\mu(Q_I) &=& \int_I\int_0^{|I|}\frac{dxdy}{y^2\Phi_2\circ\Phi_1^{-1}\left(\frac{1}{y}\right)}\\ &=& |I|\sum_{j=0}^\infty\int_{2^{-j-1}|I|}^{2^{-j}|I|}\frac{dy}{y^2\Phi_2\circ\Phi_1^{-1}\left(\frac{1}{y}\right)}\\ &\le& |I|\sum_{j=0}^\infty\frac{1}{(2^{-j-1}|I|)^2\Phi_2\circ\Phi_1^{-1}\left(\frac{1}{2^{-j}|I|}\right)}2^{-j}|I|\\ &\le& \frac{4}{\Phi_2\circ\Phi_1^{-1}\left(\frac{1}{|I|}\right)}\sum_{j=0}^\infty 2^{-j(s-1)}\\ &\lesssim& \frac{1}{\Phi_2\circ\Phi_1^{-1}\left(\frac{1}{|I|}\right)}.
\Eeas
The proof is complete.
\end{proof}
\begin{xrem}
For the measure $d\mu(x+iy)=\frac{dV(x+iy)}{y^2\Phi_2\circ\Phi_1^{-1}\left(\frac{1}{y}\right)}$ to be a $\Phi_2\circ\Phi_1^{-1}$-Carleson measure, that $\Phi_2\circ\Phi_1^{-1}$ satisfies the $\nabla_2$-Condition is relevant in our proof. Indeed, if we take $\Phi_1(t)=t^2$ and $\Phi_2(t)=t^2\ln(C+t)$ with $C>0$ large enough, then these two functions are in $\mathscr U$ and obviously, $\Phi_1$ satisfies the $\nabla_2$-condition while $\Phi_2\circ\Phi_1^{-1}(t)=t\ln(C+t^{\frac 12})$ does not, moreover, we have that $\mu$ is not a $\Phi_2\circ\Phi_1^{-1}$-Carleson measure in this case. Indeed, we have for any finite interval $I$,
\Beas
\mu(Q_I) &=& \int_I\int_0^{|I|}\frac{dxdy}{y\ln\left(C+\frac{1}{y^{\frac 12}}\right)}\\ &=& 2|I|\int_{\frac{1}{\sqrt{|I|}}}^\infty \frac{ds}{s\ln(C+s)}\\ &\ge& |I|\int_{\frac{1}{\sqrt{|I|}}}^\infty \frac{ds}{(C+s)\ln(C+s)}\\ &=& |I|\lim_{R\rightarrow \infty}\left[\ln\ln(C+R)-\ln\ln\left(C+\frac{1}{\sqrt{|I|}}\right)\right]\\ &=&\infty.
\Eeas

\end{xrem}
The proof of the following lemma is obtained as for Lemma \ref{lem:main31}.
\begin{lem}\label{lem:main41}
Let $\Phi_1,\Phi_2\in \mathscr U$. Assume that $\frac{\Phi_2}{\Phi_1}$ is non-decreasing. Let $\alpha, \beta>-1$ and define for $t\in (0,\infty)$, the function
$$\omega(t)=\frac{\Phi_2^{-1}\left(\frac{1}{t^{2+\beta}}\right)}{\Phi_1^{-1}\left(\frac{1}{t^{2+\alpha}}\right)}.$$
Then the following assertions hold.
\begin{itemize}
\item[(i)] If $\Phi_1$ satisfies the $\nabla_2$-condition, and $\omega$ is equivalent to $1$, then $$\mathcal{M}\left(A_\alpha^{\Phi_1}(\mathbb{C}_+),A_\beta^{\Phi_2}(\mathbb{C}_+)\right)=H^\infty(\mathbb{C}_+).$$
\item[(ii)] If $\omega$ is nondecreasing and $\lim_{t\rightarrow 0}\omega(t)=0$, then $$\mathcal{M}\left(A_\alpha^{\Phi_1}(\mathbb{C}_+),A_\beta^{\Phi_2}(\mathbb{C}_+)\right)=\{0\}.$$
\end{itemize}
\end{lem}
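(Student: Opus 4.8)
The plan is to mimic the proof of Lemma \ref{lem:main31}, replacing Theorem \ref{thm:main1} and Theorem \ref{thm:embed1} by Theorem \ref{thm:main2} and Theorem \ref{thm:embed2}, and using the Bergman-Orlicz pointwise estimate of Lemma \ref{lem:pointwiseberg} together with the test function of Lemma \ref{lem:testfunctbergo} in place of Lemma \ref{lem:testfuncthardyo}.

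We first record the key pointwise bound, which will be used in both (i) and (ii). Let $g\in \mathcal{M}\left(A_\alpha^{\Phi_1}(\mathbb{C}_+),A_\beta^{\Phi_2}(\mathbb{C}_+)\right)$, so that $\|fg\|_{\Phi_2,\beta}^{lux}\lesssim \|f\|_{\Phi_1,\alpha}^{lux}$ for all $f\in A_\alpha^{\Phi_1}(\mathbb{C}_+)$. Fix $z_0=x_0+iy_0\in\mathbb{C}_+$ and apply this to the function $f_{z_0}(w)=\Phi_1^{-1}\left(\frac{1}{y_0^{2+\alpha}}\right)\frac{y_0^{4+2\alpha}}{(w-\bar{z_0})^{4+2\alpha}}$, which by Lemma \ref{lem:testfunctbergo} lies in $A_\alpha^{\Phi_1}(\mathbb{C}_+)$ with norm bounded by a constant independent of $z_0$. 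By Lemma \ref{lem:pointwiseberg} applied to $f_{z_0}g$ at the point $z_0$, and using $|f_{z_0}(z_0)|\thickapprox \Phi_1^{-1}\left(\frac{1}{y_0^{2+\alpha}}\right)$,
$$\Phi_1^{-1}\left(\frac{1}{y_0^{2+\alpha}}\right)|g(z_0)|\lesssim \Phi_2^{-1}\left(\frac{1}{y_0^{2+\beta}}\right)\|f_{z_0}g\|_{\Phi_2,\beta}^{lux}\lesssim \Phi_2^{-1}\left(\frac{1}{y_0^{2+\beta}}\right),$$
hence $|g(z_0)|\lesssim \omega(y_0)$ for every $z_0\in\mathbb{C}_+$.

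For (ii), if $\omega$ is non-decreasing with $\lim_{t\to 0}\omega(t)=0$, then letting $\Im z\to 0$ in the bound $|g(z)|\lesssim\omega(\Im z)$ forces $g\equiv 0$; conversely $0$ is trivially a multiplier. For (i), if $\omega\thickapprox 1$, then $\Phi_1^{-1}(t^{2+\alpha})\thickapprox \Phi_2^{-1}(t^{2+\beta})$ for all $t>0$, so in particular $\Phi_1^{-1}(t^{2+\alpha})\le \Phi_2^{-1}(Ct^{2+\beta})$ for some $C>0$; since $\Phi_1,\Phi_2\in\mathscr{U}$, $\Phi_1$ satisfies $\nabla_2$ and $\frac{\Phi_2}{\Phi_1}$ is non-decreasing, Theorem \ref{thm:embed2} gives a constant $C>0$ with $\int_{\mathbb{C}_+}\Phi_2\left(\frac{|f(z)|}{C\|f\|_{\Phi_1,\alpha}^{lux}}\right)dV_\beta(z)\le 1$ for all nonzero $f\in A_\alpha^{\Phi_1}(\mathbb{C}_+)$. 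For $g\in H^\infty(\mathbb{C}_+)$, replacing $f$ by $fg$ and using $|fg|\le\|g\|_\infty|f|$ together with the monotonicity of $\Phi_2$ shows $\|fg\|_{\Phi_2,\beta}^{lux}\lesssim\|g\|_\infty\|f\|_{\Phi_1,\alpha}^{lux}$, so $g$ is a multiplier. Conversely, the pointwise bound above with $\omega\thickapprox 1$ gives $|g(z)|\lesssim 1$ on $\mathbb{C}_+$, so $g\in H^\infty(\mathbb{C}_+)$.

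The only step requiring care is the derivation of the pointwise bound $|g(z)|\lesssim\omega(\Im z)$: one must check that $f_{z_0}(z_0)$ does not vanish and is comparable to $\Phi_1^{-1}(y_0^{-(2+\alpha)})$ (which is immediate since $z_0-\bar z_0=2iy_0$), and that the normalization of $f_{z_0}$ in Lemma \ref{lem:testfunctbergo} is uniform in $z_0$; both are already available. The remaining estimates are routine and identical in structure to those in the proof of Lemma \ref{lem:main31}.
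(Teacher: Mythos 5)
Your proposal is correct and follows essentially the same route the paper intends: the paper itself only says that Lemma \ref{lem:main41} ``is obtained as for Lemma \ref{lem:main31}'', i.e.\ precisely your adaptation using Lemma \ref{lem:pointwiseberg}, the test functions of Lemma \ref{lem:testfunctbergo} evaluated at $z_0$ (as the paper also does in Lemma \ref{lem:main42}), and Theorem \ref{thm:embed2} in place of Theorem \ref{thm:embed1}. The only terse point, deducing $g\equiv 0$ in (ii) from $|g(z)|\lesssim\omega(\Im z)\to 0$ as $\Im z\to 0$ (e.g.\ via Schwarz reflection), is glossed at exactly the same level in the paper's own proof of Lemma \ref{lem:main31}(ii).
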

Let us prove the following.
\begin{lem}\label{lem:main42}
Let $\Phi_1\in \mathscr U$ and $\Phi_2\in  \tilde{\mathscr U}$. Assume that $\Phi_1$ and $\Phi_2\circ\Phi_1^{-1}$ satisfy the $\nabla_2$-condition, and $\frac{\Phi_2}{\Phi_1}$ is non-decreasing. Let $\alpha, \beta>-1$ and define for $t\in (0,\infty)$, the function
$$\omega(t)=\frac{\Phi_2^{-1}\left(\frac{1}{t^{2+\beta}}\right)}{\Phi_1^{-1}\left(\frac{1}{t^{2+\alpha}}\right)}.$$

If $\omega$ is non-increasing on $(0,\infty)$, then $$\mathcal{M}\left(A_\alpha^{\Phi_1}(\mathbb{C}_+),A_\beta^{\Phi_2}(\mathbb{C}_+)\right)=H_\omega^\infty(\mathbb{C}_+).$$
\end{lem}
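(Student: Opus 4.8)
The plan is to mirror, step by step, the proof of Lemma \ref{lem:main32}, with Theorem \ref{thm:main1} replaced by Theorem \ref{thm:main2}, Lemma \ref{lem:testfuncthardyo} by Lemma \ref{lem:testfunctbergo}, and with the exponents $1$ and $2+\alpha$ that sit inside $\Phi_1^{-1}$ and $\Phi_2^{-1}$ in the Hardy situation replaced by $2+\alpha$ and $2+\beta$.

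The inclusion $\mathcal{M}(A_\alpha^{\Phi_1}(\mathbb{C}_+),A_\beta^{\Phi_2}(\mathbb{C}_+))\subseteq H_\omega^\infty(\mathbb{C}_+)$ will follow from the pointwise bound $|g(z)|\lesssim\omega(\Im z)$, valid for any such multiplier $g$; this is the Bergman analogue of (\ref{eq:pointwisegene}) and is obtained exactly as in the proof of Lemma \ref{lem:main41}, by combining the multiplier inequality $\|fg\|_{\Phi_2,\beta}^{lux}\le C\|f\|_{\Phi_1,\alpha}^{lux}$ with the pointwise estimate of Lemma \ref{lem:pointwiseberg} applied to $fg$, and then evaluating at $z=z_0$ with $f$ the test function $f_{z_0}$ of Lemma \ref{lem:testfunctbergo} (for which $\|f_{z_0}\|_{A_\alpha^{\Phi_1}}\lesssim1$ and $|f_{z_0}(z_0)|\approx\Phi_1^{-1}(1/y_0^{2+\alpha})$).

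For the reverse inclusion, fix $g\in H_\omega^\infty(\mathbb{C}_+)$ and $f\in A_\alpha^{\Phi_1}(\mathbb{C}_+)$, $f\neq 0$, and set $K=\max\{1,2C_1C_2,2C_1C_3\}$ with $C_1,C_2,C_3$ the constants of $(a_1)$, $(a_2)$, $(a_3)$. Using $|g(z)|\le\|g\|_{H_\omega^\infty}\,\omega(\Im z)$ and the submultiplicativity $(a_1)$ of $\Phi_2$,
$$\int_{\mathbb{C}_+}\Phi_2\left(\frac{|g(z)f(z)|}{KC\|g\|_{H_\omega^\infty}\|f\|_{\Phi_1,\alpha}^{lux}}\right)dV_\beta(z)\le C_1\int_{\mathbb{C}_+}\Phi_2(\omega(\Im z))\,\Phi_2\left(\frac{|f(z)|}{KC\|f\|_{\Phi_1,\alpha}^{lux}}\right)dV_\beta(z)=:L_1+L_2,$$
where $L_1$, $L_2$ are the parts of the last integral over $\{\Im z>1\}$ and $\{\Im z\le1\}$. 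On $\{\Im z>1\}$ one has $\omega(\Im z)\le1$, so $(a_3)$ gives $\Phi_2(\omega(\Im z))\lesssim(\Im z)^{-(2+\beta)}/\Phi_2\circ\Phi_1^{-1}(1/(\Im z)^{2+\alpha})$; on $\{\Im z\le1\}$ one has $\omega(\Im z)\ge1$, and $(a_2)$ together with the upper type $q$ of $\Phi_2$ (which yields $\Phi_2\circ\Phi_1^{-1}(u)\lesssim u^q$ for $u\ge1$, thereby absorbing the factor $(\Phi_1^{-1})^q$ produced by $(a_2)$ in the denominator) gives the same bound. Since $dV_\beta=y^\beta\,dx\,dy$ and $\Phi_2(t/K)\le\Phi_2(t)/K$ for $K\ge1$, the choice of $K$ forces each $L_i\le\frac12\int_{\mathbb{C}_+}\Phi_2\left(\frac{|f(z)|}{C\|f\|_{\Phi_1,\alpha}^{lux}}\right)d\mu(z)$ with
$$d\mu(x+iy)=\frac{dx\,dy}{y^2\,\Phi_2\circ\Phi_1^{-1}\left(\frac{1}{y^{2+\alpha}}\right)},$$
so, by Theorem \ref{thm:main2}, it remains only to check that $\mu$ is a $(\Phi_2\circ\Phi_1^{-1},\alpha)$-Carleson measure (and then to enlarge $C$ to reach $\le1$). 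This is the dyadic computation from the end of Lemma \ref{lem:main32}: writing $\psi=\Phi_2\circ\Phi_1^{-1}$, which lies in $\mathscr{U}$ by Lemma \ref{lem:reverseprodphi} and satisfies the $\nabla_2$-condition by hypothesis, its lower index $s$ exceeds $1$ and $t\mapsto\psi(t)/t^s$ is increasing, whence, slicing $Q_I$ over $y\in[2^{-j-1}|I|,2^{-j}|I|]$, $\mu(Q_I)\le\frac{C}{\psi(1/|I|^{2+\alpha})}\sum_{j\ge0}2^{-j((2+\alpha)s-1)}\lesssim\frac{1}{\psi(1/|I|^{2+\alpha})}$.

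I expect no essential difficulty beyond faithfully transcribing the argument of Lemma \ref{lem:main32} with the target weight $\beta$ in place of $\alpha$: the pointwise estimate and the $(a_1)$--$(a_3)$ manipulations carry over unchanged, and the only point where a hypothesis is genuinely needed is the last step --- that the measure $d\mu(x+iy)=dx\,dy/(y^2\,\Phi_2\circ\Phi_1^{-1}(1/y^{2+\alpha}))$ be a $(\Phi_2\circ\Phi_1^{-1},\alpha)$-Carleson measure --- which rests on the $\nabla_2$-condition for $\Phi_2\circ\Phi_1^{-1}$ (through $s>1$) and fails without it, as the Remark following Lemma \ref{lem:main32} shows.
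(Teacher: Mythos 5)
Your proposal is correct and follows essentially the same route as the paper's own proof: the pointwise bound via Lemma \ref{lem:pointwiseberg} and the test functions of Lemma \ref{lem:testfunctbergo} for the inclusion $\mathcal{M}\subseteq H_\omega^\infty$, and for the converse the split over $\{\Im z>1\}$ and $\{\Im z\le 1\}$ using $(a_1)$--$(a_3)$ and the upper type of $\Phi_2$, reduction to Theorem \ref{thm:main2} for the measure $d\mu=dV/(y^2\,\Phi_2\circ\Phi_1^{-1}(1/y^{2+\alpha}))$, and the dyadic slicing to verify its Carleson property. The only small quibble is your closing sentence: the counterexample in the Remark after Lemma \ref{lem:main32} concerns the exponent-$1$ measure $dV/(y^2\,\Phi_2\circ\Phi_1^{-1}(1/y))$ of the Hardy-to-Bergman case, so it does not literally show failure for the present measure (indeed here the sum $\sum_j 2^{-j((2+\alpha)s-1)}$ already converges for $s\ge 1$ since $\alpha>-1$); this does not affect the validity of your proof, which uses the hypothesis $s>1$ anyway.
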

\begin{proof}
Let $g \in \mathcal{M}( {A}^{\Phi_{1}}_{\alpha}({\mathbb{C_{+}}}),  {A}^{\Phi_{2}}_{\beta}({\mathbb{C_{+}}})  )$. Then using Lemma \ref{lem:pointwiseberg}, and the test function given in Lemma \ref{lem:testfunctbergo}, we obtain as in (\ref{eq:pointwisegene}) that there is a constant $C>0$ such that for any $z=x+iy \in {\mathbb{C_{+}}}$,
\[         |g(z)|\leq C\dfrac{\Phi_{2}^{-1}\left(\frac{1}{y^{2+\beta}}\right)}{\Phi_1^{-1}\left(\frac{1}{y^{2+\alpha}}\right)}= C\omega(y).  \]
Hence \[  \dfrac{|g(z)|}{\omega(y)} \leq C<\infty .          \]
Thus $g \in {H}^{\infty}_{\omega}({\mathbb{C_{+}}})$. 
\vskip .2cm
For the converse, we start by observing that as in the proof of Lemma \ref{lem:main32}, one has that the measure $$ d\mu(x+iy)=\dfrac{dV(x+iy)}{y^{2}\Phi_{2} \circ \Phi_{1}^{-1}(\frac{1}{y^{2+\alpha}})}$$	

is a $(\Phi_2\circ\Phi_1^{-1},\alpha)$-Carleson measure. Hence by Theorem \ref{thm:main2}, the is a constant $C>0$ such that for any $f \in {A}^{\Phi_{1}}_{\alpha}({\mathbb{C_{+}}}), f\not=0$,
$$\int_{{\mathbb{C}}_{+}}\Phi_{2}\left( \dfrac{|f(z)|}{C\|f\|_{\Phi_{1},\alpha} ^{lux}}\right)d\mu(z)\le 1.$$

Let $f \in {A}^{\Phi_{1}}_{\alpha}({\mathbb{C_{+}}}), f\not=0$ , and define	
\[ L_{1}=C_{1}\int_{{\mathbb{C_{+}}}}\Phi_{2}\left(\frac{\Phi_{2}^{-1}\left(\frac{1}{(\Im z)^{2+\beta}}\right)}{\Phi_{1}^{-1}\left(\frac{1}{(\Im z)^{2+\alpha}}\right)}\right)\Phi_{2}\left( \dfrac{|f(z)|}{KC\|f\|_{\Phi_{1},\alpha} ^{lux}}\right)\chi_{\{\Im z > 1\}}(z) dV_{\beta}(z) \]	
and	
\[  L_{2}=C_{1}\int_{{\mathbb{C_{+}}}}\Phi_{2}\left(\frac{\Phi_{2}^{-1}\left(\frac{1}{(\Im z)^{2+\beta}}\right)}{\Phi_{1}^{-1}\left(\frac{1}{(\Im z)^{2+\alpha}}\right)}\right)\Phi_{2}\left( \dfrac{|f(z)|}{KC\|f\|_{\Phi_{1},\alpha} ^{lux}}\right)\chi_{\{\Im z \leq 1\}}(z) dV_{\beta}(z)\]	
where $K=\max\{1, 2C_{1}C_{3} , 2C_{1}C_{2}C_{4} \}$ with $C_{1} , C_{2}, C_{3}$ and $C_{4}$ the constants $(13) , (14), (15)\,\,\textrm{and}\,\,(17)$ respectively.
\vskip .1cm
As $\omega$ is nonincreasing on $(0, \infty)$, we have that $\forall ~t\geq 1$, \[ \Phi_{2}^{-1}\left(\frac{1}{t^{2+\beta}}\right) \leq \Phi_{1}^{-1}\left(\frac{1}{t^{2+\alpha}}\right) \leq 1.\]	
Hence using (15), we obtain 
$$\begin{array}{rcl}
L_{1} &\leq& C_{1}C_{3}\int_{{\mathbb{C_{+}}}}\frac{1}{(\Im z)^{2+\beta}\Phi_{2} \circ \Phi_{1}^{-1}\left(\frac{1}{(\Im z)^{2+\alpha}}\right)}\times\\ && \Phi_{2}\left( \dfrac{|f(z)|}{KC\|f\|_{\Phi_{1},\alpha} ^{lux}}\right)\chi_{\{\Im z > 1\}}(z) dV_{\beta}(z) \\\\	
&\leq& C_{1}C_{3}\int_{{\mathbb{C_{+}}}}\frac{1}{(\Im z)^{2+\beta}\Phi_{2} \circ \Phi_{1}^{-1}\left(\frac{1}{(\Im z)^{2+\alpha}}\right)}\Phi_{2}\left( \dfrac{|f(z)|}{KC\|f\|_{\Phi_{1},\alpha} ^{lux}}\right)dV_{\beta}(z) \\\\	
&\leq& \frac{1}{2}\int_{{\mathbb{C}}_{+}}\Phi_{2}\left( \dfrac{|f(z)|}{C\|f\|_{\Phi_{1},\alpha} ^{lux}}\right)d\mu(z) \\\\	
&\leq& \frac{1}{2}.	
\end{array}$$	
Also, we have that ~ $\forall ~t \leq 1$, \[  \Phi_{1}^{-1}\left(\frac{1}{t^{2+\alpha}}\right) \geq 1 ~~~  \textrm{and}~~~  \Phi_{2}^{-1}\left(\frac{1}{t^{2+\beta}}\right) \geq  1.           \]
Thus if $q\geq 1$ is the upper-type of $\Phi_{2}$, we obtain using (14) that
$$\begin{array}{rcl}
L_{2}
&\leq& C_{1}C_{2}\int_{{\mathbb{C_{+}}}}\frac{1}{(\Im z)^{2+\beta}\left(\Phi_{1}^{-1}\left(\frac{1}{(\Im z)^{2+\alpha}}\right)\right)^{q}}\times\\ & & \Phi_{2}\left( \dfrac{|f(z)|}{KC\|f\|_{\Phi_{1},\alpha} ^{lux}}\right)\chi_{\{\Im z \leq 1\}}(z) dV_{\beta}(z) \\\\
&\leq& C_{1}C_{2}C_{4}\int_{{\mathbb{C_{+}}}}\frac{1}{(\Im z)^{2+\beta}\Phi_{2} \circ \Phi_{1}^{-1}\left(\frac{1}{(\Im z)^{2+\alpha}}\right)}\times\\ & & \Phi_{2}\left( \dfrac{|f(z)|}{KC\|f\|_{\Phi_{1},\alpha} ^{lux}}\right)\chi_{\{\Im z \le 1\}}(z) dV_{\beta}(z) \\\\	
&\leq& C_{1}C_{2}C_{4}\int_{{\mathbb{C_{+}}}}\frac{1}{(\Im z)^{2+\beta}\Phi_{2} \circ \Phi_{1}^{-1}\left(\frac{1}{(\Im z)^{2+\alpha}}\right)}\Phi_{2}\left( \dfrac{|f(z)|}{KC\|f\|_{\Phi_{1},\alpha} ^{lux}}\right)dV_{\beta}(z) \\\\	
&\leq& \frac{1}{2}\int_{{\mathbb{C}}_{+}}\Phi_{2}\left( \dfrac{|f(z)|}{C\|f\|_{\Phi_{1},\alpha} ^{lux}}\right)d\mu(z) \\\\	
&\leq& \frac{1}{2}.	
\end{array}$$  
Now suppose that $g \in {H}^{\infty}_{\omega}({\mathbb{C_{+}}})$. Let us prove that $g \in \mathcal{M}( {A}^{\Phi_{1}}_{\alpha}({\mathbb{C_{+}}}),  {A}^{\Phi_{2}}_{\beta}({\mathbb{C_{+}}})  )$. If $g=0$, then there is nothing to prove, so let us assume that $g\neq 0$.
$\forall~ f \in A_\alpha^{\Phi_1}(\mathbb{C}_+),~ f\not=0$, using the above observations and (13), we obtain 
$$\begin{array}{rcl}
L &:=& \int_{{\mathbb{C}}_{+}}\Phi_{2}\left( \dfrac{|g(z)f(z)|}{KC\|g\|_{\omega}^{\infty}\|f\|_{\Phi_{1},\alpha} ^{lux}}\right)dV_{\beta}(z)\\ 
&\leq &\int_{{\mathbb{C}}_{+}}\Phi_{2}\left(\frac{\Phi_{2}^{-1}(\frac{1}{(\Im z)^{2+\beta}})}{\Phi_{1}^{-1}(\frac{1}{(\Im z)^{2+\alpha}})} \frac{|f(z)|}{KC\|f\|_{\Phi_{1},\alpha} ^{lux}}\right)dV_{\beta}(z) \\\\	
&\leq & C_{1}\int_{{\mathbb{C_{+}}}}\Phi_{2}\left(\frac{\Phi_{2}^{-1}(\frac{1}{(\Im z)^{2+\beta}})}{\Phi_{1}^{-1}(\frac{1}{(\Im z)^{2+\alpha}})}\right)\Phi_{2}\left( \dfrac{|f(z)|}{KC\|f\|_{\Phi_{1},\alpha} ^{lux}}\right) dV_{\beta}(z) \\\\	
&\leq& L_{1} +L_{2} \\\\	
&\leq& 1.   	
\end{array}$$	
Thus $g \in \mathcal{M}( {A}^{\Phi_{1}}_{\alpha}({\mathbb{C_{+}}}),  {A}^{\Phi_{2}}_{\beta}({\mathbb{C_{+}}})  )$  and the proof is complete.
\end{proof}
\section{Further results and concluding remarks}
In this paper, we have presented Carleson embeddings for both Hardy-Orlicz spaces and Bergman-Orlicz spaces, extending the corresponding results for power functions. We have seen with our examples of applications, how useful these embeddings are to understand some other questions of complex analysis and harmonic analysis.
\vskip .1cm
It is possible to obtain weak versions of the above Carleson embeddings using essentially the ideas developed in this paper. Let us start this further discussion by recall that for $\Phi$ a growth function, the weak Orlicz space $L^{\Phi,\infty}(\mathbb{C}_+,\mu)$ consists of all functions $f$ such that $$\|f\|_{\Phi,\infty}:=\sup_{\lambda>0}\Phi(\lambda)\mu\left(\left\{z\in \mathbb{C}_+:\,|f(z)|>\lambda \right\}\right)<\infty.$$  
The characterization of the positive measures $\mu$ such that $H^{1}(\mathbb{C}_+)$ embeds continuously into $L^{1,\infty}(\mathbb{C}_+,\mu)$ is also due to L. Carleson (see \cite{carleson2}). The following is an extension of his result.
\begin{thm}\label{thm:Hardyweak}
Let $\Phi_1$ and $\Phi_2$ be two $\mathcal{C}^1$ convex growth functions with $\Phi_2\in\mathscr{U}$. Assume that $\Phi_1$ satisfies the $\nabla_2$-condition and that $\frac{\Phi_2}{\Phi_1}$ is nondecreasing. Let $\mu$ be a positive Borel measure on $\mathbb{C}_+$. Then the following assertions are equivalent.
\begin{itemize}
\item[(a)] There exists a constant $C_1>0$ such that for any interval $I\subset \mathbb{R}$,
\Be\label{eq:weakhardy1}
 \mu(Q_I)\le \frac{C_1}{\Phi_2\circ\Phi_1^{-1}\left(\frac{1}{|I|}\right)}.
 \Ee
\item[(b)] There exists a constant $C_2>0$ such that for any $f\in H^{\Phi_1}(\mathbb{C}_+)$, $f\neq 0$,
\Be\label{eq:weakhardy2}
\sup_{\lambda>0}\Phi_2(\lambda)\mu\left(\left\{z\in \mathbb{C}_+:\,|f(z)|>C_2\lambda\|f^\star\|_{\Phi_1}^{lux} \right\}\right)\le 1.
%\sup_{x+iy\in \mathbb{C}_+}\int_{\mathbb{C}_+}\Phi_2\left(\Phi_1^{-1}\left(\frac{1}{y}\right)\frac{y^2}{|z-\bar{w}|^2}\right)d\mu(w)\le C<\infty.
\Ee
%where $C>0$ is the constant in (\ref{eq:weakhardy1} ).
\end{itemize}
\end{thm}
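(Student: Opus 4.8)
The plan is to obtain Theorem~\ref{thm:Hardyweak} as the weak-type companion of Theorem~\ref{thm:main1}: the same machinery applies, but one stops at the level-set estimate instead of integrating against $\Phi_2'(\lambda)\,d\lambda$. Throughout I would write $\Phi_3(t)=1/\Phi_2\circ\Phi_1^{-1}(1/t)$; by Lemma~\ref{lem:reverseprodphi} one has $\Phi_3\in\mathscr{U}$, and the proof of that lemma also gives $\Phi_2\circ\Phi_1^{-1}\in\mathscr{U}^q$ for some $q\ge1$. Note that (a) is precisely the statement that $\mu$ is a $\Phi_2\circ\Phi_1^{-1}$-Carleson measure in the sense of (\ref{eq:phicarldef}).

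For $(a)\Rightarrow(b)$, I would fix $f\in H^{\Phi_1}(\mathbb C_+)$ with $f\neq0$ (assuming $\|f^\star\|_{\Phi_1}^{lux}<\infty$, otherwise the set in (b) is empty), fix $\lambda>0$, and set $\beta=C_2\lambda\|f^\star\|_{\Phi_1}^{lux}$ with a universal constant $C_2\ge1$ to be pinned down at the end. Since $f$ is holomorphic, hence harmonic, Lemma~\ref{lem:main11} applied with the growth function $\Phi_2\circ\Phi_1^{-1}$ (whose associated $\tilde\Phi$ is exactly $\Phi_3\in\mathscr{U}$) gives $\mu(\{|f|>\beta\})\le C\,\Phi_3(|\{f^\star>\beta\}|)$, with $C$ the constant in (\ref{eq:phicarldef}). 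Chebyshev's inequality together with the definition of the Luxembourg norm bounds $|\{f^\star>\beta\}|$ by $1/\Phi_1(C_2\lambda)$, so monotonicity of $\Phi_3$ yields
\[
\mu(\{|f|>\beta\})\le C\,\Phi_3\left(\frac{1}{\Phi_1(C_2\lambda)}\right)=\frac{C}{\Phi_2\circ\Phi_1^{-1}(\Phi_1(C_2\lambda))}=\frac{C}{\Phi_2(C_2\lambda)}.
\]
Since $\Phi_2\in\mathscr{U}$, the map $t\mapsto\Phi_2(t)/t$ is nondecreasing, hence $\Phi_2(C_2\lambda)\ge C_2\Phi_2(\lambda)$; choosing $C_2=\max\{1,C\}$ makes $\Phi_2(\lambda)\,\mu(\{|f|>\beta\})\le 1$, which is (b).

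For $(b)\Rightarrow(a)$, I would test (b) on the reproducing-type functions of Lemma~\ref{lem:testfuncthardyo}. Given a finite interval $I$ with $Q_I$ centred at $z_0=x_0+iy_0$ (so $|I|=2y_0$), the function $f_{z_0}(w)=\Phi_1^{-1}(1/y_0)\,y_0^2/(w-\bar z_0)^2$ lies in $H^{\Phi_1}(\mathbb C_+)$ with $\|f_{z_0}\|_{H^{\Phi_1}}^{lux}\le\pi$, hence by Theorem~\ref{thm:nontangequivdef} (this is where the $\nabla_2$-hypothesis on $\Phi_1$ is used) $\|f_{z_0}^\star\|_{\Phi_1}^{lux}\le C_0$ with $C_0$ independent of $z_0$. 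A direct computation shows $|f_{z_0}(w)|\ge\frac{1}{10}\Phi_1^{-1}(1/y_0)$ for every $w\in Q_I$, so taking $\lambda>0$ with $C_2\lambda\|f_{z_0}^\star\|_{\Phi_1}^{lux}=\frac{1}{20}\Phi_1^{-1}(1/y_0)$ forces $Q_I\subset\{|f_{z_0}|>C_2\lambda\|f_{z_0}^\star\|_{\Phi_1}^{lux}\}$, whence (b) gives $\mu(Q_I)\le1/\Phi_2(\lambda)$. Finally $\lambda\ge\Phi_1^{-1}(1/y_0)/M$ with $M:=20C_2C_0\ge1$, so the upper type $q$ of $\Phi_2$ gives $\Phi_2\circ\Phi_1^{-1}(1/y_0)\le\Phi_2(M\lambda)\lesssim M^q\Phi_2(\lambda)$, and therefore
\[
\mu(Q_I)\le\frac{1}{\Phi_2(\lambda)}\lesssim\frac{M^q}{\Phi_2\circ\Phi_1^{-1}(1/y_0)}=\frac{M^q}{\Phi_2\circ\Phi_1^{-1}(2/|I|)}\le\frac{M^q}{\Phi_2\circ\Phi_1^{-1}(1/|I|)},
\]
using that $\Phi_2\circ\Phi_1^{-1}$ is nondecreasing. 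This is (a).

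The harmonic-analytic substance — the decomposition of the level sets of $f^\star$ into pairwise disjoint Carleson squares — is already packaged in Lemma~\ref{lem:main11}, so the remaining work is essentially bookkeeping. The only point that needs care is the normalisation: producing the constant $1$ demanded in (b) (rather than an unspecified constant) relies on rescaling $\lambda$ via the monotonicity of $t\mapsto\Phi_2(t)/t$, and the converse direction needs the passage between $\|f_{z_0}\|_{H^{\Phi_1}}^{lux}$ and $\|f_{z_0}^\star\|_{\Phi_1}^{lux}$, which is exactly where the $\nabla_2$-condition on $\Phi_1$ is invoked.
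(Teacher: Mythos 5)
Your proposal is correct and follows essentially the same route as the paper: the forward direction combines Lemma~\ref{lem:main11} (with $\Phi_3=1/\Phi_2\circ\Phi_1^{-1}(1/\cdot)\in\mathscr{U}$ from Lemma~\ref{lem:reverseprodphi}) with a Chebyshev estimate against the Luxembourg norm, and the converse tests (b) on the functions of Lemma~\ref{lem:testfuncthardyo}, using the lower bound $|f_{z_0}|\gtrsim\Phi_1^{-1}(1/y_0)$ on $Q_I$ and the upper type of $\Phi_2$. The only differences are bookkeeping: you normalize the constant via $\Phi_2(C_2\lambda)\ge C_2\Phi_2(\lambda)$ where the paper uses the monotonicity of $\Phi_3(t)/t$, and you make explicit (via Theorem~\ref{thm:nontangequivdef}) the passage from $\|f_{z_0}\|_{H^{\Phi_1}}^{lux}$ to $\|f_{z_0}^\star\|_{\Phi_1}^{lux}$, which the paper leaves implicit.
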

\begin{proof}
Assume that (\ref{eq:weakhardy1}) holds.  Then by Lemma \ref{lem:main11} we have that for $f\in H^{\Phi_1}(\mathbb{C}_+)$, $f\neq 0$, and any $\lambda>0$,
$$\mu\left(\left\{z\in \mathbb{C}_+:\,\frac{|f(z)|}{K\|f^\star\|_{\Phi_1}^{lux}}>\lambda \right\}\right)\le C_1\Phi_3\left(\left|\left\{ x\in {\mathbb{R}} : \frac{f^{\star}(x)}{K\|f^\star\|_{\Phi_1}^{lux}} >  \lambda \right\}\right|\right)$$
where $\Phi_3(t)=\frac{1}{\Phi_2\circ\Phi_1^{-1}\left(\frac 1t\right)}$, and $C_1$ is the constant in (\ref{eq:weakhardy1}). We can assume that $C_1>1$, and we define $$E_\lambda:=\left\{ x\in {\mathbb{R}} : \frac{f^{\star}(x)}{\|f^\star\|_{\Phi_1}^{lux}} >  \lambda \right\}.$$ It follows that 
\Beas
S &:=& \Phi_2(\lambda)\mu\left(\left\{z\in \mathbb{C}_+:\,\frac{|f(z)|}{C_1\|f^\star\|_{\Phi_1}^{lux}}>\lambda \right\}\right)\\ &\le& C_1\Phi_2(\lambda)\Phi_3\left(\left|\left\{ x\in {\mathbb{R}} : \frac{f^{\star}(x)}{C_1\|f^\star\|_{\Phi_1}^{lux}} >  \lambda \right\}\right|\right)\\ &\le& \Phi_2(\lambda)\frac{\Phi_3\left(|E_\lambda|\right)}{|E_\lambda|}|E_\lambda|\\ &\le& \Phi_2(\lambda)\frac{\Phi_3\left(\frac{1}{\Phi_1(\lambda)}\right)}{\frac{1}{\Phi_1(\lambda)}}|E_\lambda|\\ &\le& \Phi_1(\lambda)|E_\lambda|\\ &\le& \int_{\mathbb{R}}\Phi_1\left(\frac{f^{\star}(x)}{\|f^\star\|_{\Phi_1}^{lux}}\right)dx\le 1.
\Eeas
Thus (\ref{eq:weakhardy2}) holds.
\vskip .3cm
Let us now assume that (\ref{eq:weakhardy2}) holds. Let
$I\subset {\mathbb{R}}$ be a finite interval and $Q_{I}$ its associated Carleson square. We assume that $Q_I$ is centered at $z_0=x_0+iy_0\in \mathbb{C}_+$. Then by Lemma \ref{lem:testfuncthardyo}, the function $f_0(w):=\Phi_1^{-1}\left(\frac 1{y_0}\right)\frac{y^2}{(w-\bar{z}_0)^2}$ belongs to $H^{\Phi_1}(\mathbb{C}_+)$ and $\|f\|_{H^{\Phi_1}}^{lux}\le \pi$. Also, we have seen that
 $\forall~ w \in Q_{I} ,~ |f_0(w)| > \frac{1}{10}\Phi_1^{-1}\left(\frac{1}{|I|}\right) $. Hence 
\[     Q_{I} \subset \left\{ z\in {\mathbb{C_{+}}} : |f_0(z)| >  \frac{1}{10}\Phi_1^{-1}\left(\frac{1}{|I|}\right) \right\}.\] 
%\Phi_2\circ\Phi_1^{-1}\left(\frac{1}{|I|}\right)
%We can assume that the constant $C:=C_2$ in (\ref{eq:weakhardy2}) is such that $C\ge 1$.
 Then putting $$E_I:=\left\{ w\in {\mathbb{C_{+}}} : \frac{|f_0(w)|}{C_2\|f_0\|_{H^{\Phi_1}}^{lux}} > \frac{1}{10\pi C_2}\Phi_1^{-1}\left(\frac{1}{|I|}\right)  \right\},$$
it follows from our hypothesis that 
\Beas
\Phi_2\left(\Phi_1^{-1}\left(\frac{1}{|I|}\right)\right)\mu( Q_{I} )&\le& C\Phi_2\left(\frac{1}{10\pi C_2}\Phi_1^{-1}\left(\frac{1}{|I|}\right)\right)\mu( Q_{I} )\\ &\leq& C\Phi_2\left(\frac{1}{10\pi C_2}\Phi_1^{-1}\left(\frac{1}{|I|}\right)\right)\mu\left(E_I\right)\\	
&\leq&  C.
\Eeas
Thus $\mu$ is a $\Phi_2\circ\Phi_1^{-1}$-Carleson measure. The proof is complete.

\end{proof}
\vskip .1cm

Similarly, we have the following weak-Carleson embedding result for weighted Bergman-Orlicz spaces.
\begin{thm}\label{thm:Bergweak}
Let $\Phi_1$ and $\Phi_2$ be two growth functions in $\mathscr{U}$. Assume that $\Phi_1$ satisfies the $\nabla_2$-condition and that $\frac{\Phi_2}{\Phi_1}$ is nondecreasing. Let $\mu$ be a positive Borel measure on $\mathbb{C}_+$ and let $\alpha>-1$. Then the following assertions are equivalent.
\begin{itemize}
\item[(a)] There exists a constant $C_1>0$ such that for any interval $I\subset \mathbb{R}$,
\Be\label{eq:bergweak1}
 \mu(Q_I)\le \frac{C_1}{\Phi_2\circ\Phi_1^{-1}\left(\frac{1}{|I|^{2+\alpha}}\right)}.
 \Ee
\item[(b)] There exists a constant $C_2>0$ such that for any $f\in A_\alpha^{\Phi_1}(\mathbb{C}_+)$, $f\neq 0$,
\Be\label{eq:bergweak2}
\sup_{\lambda>0}\Phi_2(\lambda)\mu\left(\left\{z\in \mathbb{C}_+:\,|f(z)|>C_2\lambda\|f\|_{\Phi_1,\alpha}^{lux} \right\}\right)\le 1.
%\sup_{x+iy\in \mathbb{C}_+}\int_{\mathbb{C}_+}\Phi_2\left(\Phi_1^{-1}\left(\frac{1}{y^{2+\alpha}}\right)\frac{y^{4+2\alpha}}{|z-\bar{w}|^{4+2\alpha}}\right)d\mu(w)\le C<\infty.
\Ee
\end{itemize}
\end{thm}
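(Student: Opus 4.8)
The plan is to mirror the proof of Theorem \ref{thm:Hardyweak}, in the same way that the proof of Theorem \ref{thm:main2} parallels that of Theorem \ref{thm:main1}: the nontangential maximal function is replaced throughout by the dyadic weighted Hardy--Littlewood maximal function $\mathcal{M}_\alpha^d$, and Lemma \ref{lem:main21}, Lemma \ref{lem:levelsets}, Proposition \ref{prop:boundedHLmaxberg} and the test functions of Lemma \ref{lem:testfunctbergo} play the roles of their Hardy--Orlicz counterparts.

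For the implication $(a)\Rightarrow(b)$, I would assume (\ref{eq:bergweak1}), that is, that $\mu$ is a $(\Phi_2\circ\Phi_1^{-1},\alpha)$-Carleson measure, and set $\Phi_3(t):=1/(\Phi_2\circ\Phi_1^{-1}(1/t))$, which belongs to $\mathscr{U}$ by Lemma \ref{lem:reverseprodphi}; note that $\Phi_3$ is nondecreasing and that $\Phi_3(1/\Phi_1(t))=1/\Phi_2(t)$. Fix $f\in A_\alpha^{\Phi_1}(\mathbb{C}_+)$, $f\neq 0$, and $\lambda>0$, and put $g:=f/(C\|f\|_{\Phi_1,\alpha}^{lux})$, where $C>1$ is the constant of (\ref{eq:HLineqberg}); then $\int_{\mathbb{C}_+}\Phi_1(\mathcal{M}_\alpha^d g)\,dV_\alpha\le 1$ by Proposition \ref{prop:boundedHLmaxberg}. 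Since $f$ is holomorphic, the sub-mean-value estimate (\ref{eq:mvt}) yields $|f(z)|\lesssim\mathcal{M}_\alpha f(z)$, so assertion (i) of Lemma \ref{lem:levelsets} gives $\{z:|f(z)|>C_2\lambda\|f\|_{\Phi_1,\alpha}^{lux}\}\subset\{z:\mathcal{M}_\alpha^d g(z)>c\lambda\}$ for a constant $c$ depending only on $\alpha$ and on $C_2/C$. Applying Lemma \ref{lem:main21} to $g$ bounds the $\mu$-measure of the latter set by $C_1\Phi_3(|\{\mathcal{M}_\alpha^d g>c\lambda\}|_\alpha)$, where $C_1$ is the Carleson constant of (\ref{eq:bergweak1}), while Chebyshev's inequality gives $|\{\mathcal{M}_\alpha^d g>c\lambda\}|_\alpha\le 1/\Phi_1(c\lambda)$. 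Putting these together with the identity $\Phi_3(1/\Phi_1(c\lambda))=1/\Phi_2(c\lambda)$ and with the fact that $t\mapsto\Phi_2(t)/t$ is nondecreasing (to compare $\Phi_2(c\lambda)$ with $\Phi_2(\lambda)$), I would obtain $\Phi_2(\lambda)\,\mu(\{|f|>C_2\lambda\|f\|_{\Phi_1,\alpha}^{lux}\})\le 1$ as soon as $C_2$ is taken large enough; since this bound is uniform in $\lambda$ and $f$, (\ref{eq:bergweak2}) follows.

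For the converse $(b)\Rightarrow(a)$, I would fix a finite interval $I$ whose Carleson square $Q_I$ is centered at $z_0=x_0+iy_0$, so that $|I|=2y_0$, and take the test function $f_0(w):=\Phi_1^{-1}(1/y_0^{2+\alpha})\,y_0^{4+2\alpha}/(w-\bar z_0)^{4+2\alpha}$ of Lemma \ref{lem:testfunctbergo}, which lies in $A_\alpha^{\Phi_1}(\mathbb{C}_+)$ with $\|f_0\|_{\Phi_1,\alpha}^{lux}$ bounded by a constant depending only on $\alpha$. Since $|w-\bar z_0|^{4+2\alpha}\le 10^{2+\alpha}y_0^{4+2\alpha}$ for $w\in Q_I$, one has $|f_0(w)|\ge 10^{-(2+\alpha)}\Phi_1^{-1}(1/|I|^{2+\alpha})$ on $Q_I$; hence, choosing $\lambda$ to be a suitable fixed constant multiple of $\Phi_1^{-1}(1/|I|^{2+\alpha})$, one gets $Q_I\subset\{z:|f_0(z)|>C_2\lambda\|f_0\|_{\Phi_1,\alpha}^{lux}\}$. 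Then (\ref{eq:bergweak2}) applied to $f_0$ yields $\Phi_2(\lambda)\,\mu(Q_I)\le 1$, and since $\Phi_2\in\mathscr{U}$ satisfies the $\Delta_2$-condition, $\Phi_2(\lambda)$ is comparable, up to a constant, to $\Phi_2(\Phi_1^{-1}(1/|I|^{2+\alpha}))$; this is exactly (\ref{eq:bergweak1}).

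I do not expect any genuine obstacle here, since all the required tools are already in place in Sections 3 and 4. The one point that needs care is the bookkeeping of constants in $(a)\Rightarrow(b)$, namely producing a single $C_2$ that works uniformly in $f$ and $\lambda$; this is done exactly as in the strong-type Theorem \ref{thm:main2}, by first normalizing $f$ by its Luxembourg norm (with the extra factor $C$ from Proposition \ref{prop:boundedHLmaxberg}) and then absorbing the constants from (\ref{eq:mvt}), Lemma \ref{lem:levelsets}(i) and Lemma \ref{lem:main21} into $C_2$.
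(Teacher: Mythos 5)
Your proposal is correct and takes essentially the approach the paper intends: the paper gives no separate argument for Theorem \ref{thm:Bergweak}, presenting it as the analogue of Theorem \ref{thm:Hardyweak} obtained by replacing the nontangential maximal function with $\mathcal{M}_\alpha^d$ and invoking Lemma \ref{lem:main21}, Lemma \ref{lem:levelsets}, Proposition \ref{prop:boundedHLmaxberg} and the test functions of Lemma \ref{lem:testfunctbergo}, which is exactly what you do. Your handling of the constants (normalizing by $C\|f\|_{\Phi_1,\alpha}^{lux}$, absorbing the factors from (\ref{eq:mvt}), Lemma \ref{lem:levelsets}(i) and the Carleson constant into $C_2$, and using the upper type of $\Phi_2$ in the converse) matches the scheme of Theorems \ref{thm:main2} and \ref{thm:Hardyweak}.
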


Finally, we remark that in the case of Bergman-Orlicz spaces, one could have also considered a characterization of their Carleson measures in terms of Bergman metric balls. The case of the continuous embeddings $$H^{\Phi_1},A_\alpha^{\Phi_1}\hookrightarrow L^{\Phi_2}(d\mu)$$ for $\frac{\Phi_2}{\Phi_1}$ nonincreasing is still open and is expected to be particularly hard for the case of Hardy-Orlicz spaces.

%\subsection*{Acknowledgements}
%This research was partly supported by NSF (grant no. XXXX).

\end{document}